\documentclass[12pt,reqno]{amsart}
\usepackage{amsthm, amscd, amsfonts, amssymb, graphicx, color}
\usepackage[bookmarksnumbered, colorlinks, plainpages,linkcolor=green,anchorcolor=blue,citecolor=blue,urlcolor=blue]{hyperref}
\usepackage{mathrsfs}
\usepackage{amssymb}
\usepackage{enumerate}
\usepackage{exscale}
\usepackage{relsize}
\usepackage{pdfsync} 
\usepackage{tikz}
\tikzset{x=1.3em, y=1.3em}

\makeatletter
\@namedef{subjclassname@2020}{%
	\textup{2020} Mathematics Subject Classification}
\makeatother

\usepackage[includemp,body={398pt,550pt},footskip=30pt,%
marginparwidth=60pt,marginparsep=10pt]{geometry}

\textheight 22.5truecm \textwidth 14.5truecm
\setlength{\oddsidemargin}{0.35in}\setlength{\evensidemargin}{0.35in}
\setlength{\topmargin}{-.5cm}

\newtheorem{theorem}{Theorem}[section]
\newtheorem{lemma}[theorem]{Lemma}
\newtheorem{proposition}[theorem]{Proposition}
\newtheorem{corollary}[theorem]{Corollary}
\theoremstyle{definition}

\numberwithin{equation}{section}

\newcommand{\D}{\mathcal{D}}
\newcommand{\A}{\mathcal{A}}

\newcommand{\h}{\mathscr{H}}

\newcommand{\R}{\mathcal{R}}

\newcommand{\C}{\mathbb{C}}
\newcommand{\T}{\mathbb{T}}
\newcommand{\TT}{\mathbb{T}^{\infty}}
\newcommand{\DD}{\mathbb{D}}
\newcommand{\E}{\mathbb{E}}

\begin{document}
\title[Littlewood-type theorems for random Dirichlet series]{Littlewood-type theorems for random Dirichlet series}
\date{January 31, 2024.
}

\author[J. Chen, X. Guo and M. Wang]{Jiale Chen, Xin Guo and Maofa Wang}
\address{Jiale Chen, School of Mathematics and Statistics, Shaanxi Normal University, Xi'an 710119, China.}
\email{jialechen@snnu.edu.cn}

\address{Xin Guo, School of Statistics and Mathematics, Zhongnan University of Economics and Law, Wuhan 430073, China}
\email{xguo.math@whu.edu.cn}

\address{Maofa Wang, School of Mathematics and Statistics, Wuhan University, Wuhan 430072, China.}
\email{mfwang.math@whu.edu.cn}

\thanks{Chen was supported by the Fundamental Research Funds for the Central Universities (No. GK202207018) of China. Guo was supported by the NNSF (No. 12101467) of China. Wang was supported by the NNSF (No. 12171373) of China.}

\subjclass[2020]{30B50, 30H20, 46E15, 47H30}
\keywords{Dirichlet series, Bergman space, mixed norm space, randomization}


\begin{abstract}
  \noindent
  In this paper, we completely give the solution of the problem of Littlewood-type randomization in the Hardy and Bergman spaces of Dirichlet series. The Littlewood-type theorem for Bergman spaces of Dirichlet series is very different from the corresponding version for Hardy spaces of  Dirichlet series; but also exhibits various pathological phenomena compared with the setting of analytic Bergman spaces over the unit disk, due to the fact that Dirichlet series behave as power series of infinitely many variables. A description for the inclusion between some mixed norm spaces of Dirichlet series plays an essential role in our investigation. Finally, as another application of the inclusion, we completely characterize the superposition operators between Bergman spaces of Dirichlet series.

\end{abstract}
\maketitle
\underline{}

\section{Introduction}
\allowdisplaybreaks[4]

A folklore about the series summation is that a randomized series often enjoys improved regularity.  A well-known example is that the series $\sum_{n=1}^{\infty}\pm n^{-p}$ is almost surely convergent if and only if $p>1/2$.     In the setting of random analytic functions, the classical theorem of Littlewood \cite{Li26,Li30} states that, for any function $f(\xi)=\sum_{n=0}^{\infty}a_n\xi^n$ in the Hardy space $H^2(\DD)$ over the  unit disk $\DD$ of the complex plane $\C$,  the randomization
$$\R f(\xi):=\sum_{n=0}^{\infty}a_nX_n\xi^n$$
belongs to the Hardy space $H^q(\DD)$  alomost surely for all $q>0,$ thus the regularity improves as well.  Here  $\{X_n\}_{n\geq0}$ is a sequence of independent identically distributed Bernoulli  random variables, that is, $\mathbb{P}(X_n=1)=\mathbb{P}(X_n=-1)=1/2$ for all $n\geq 0.$    More precisely, for $p, q\in (0,\infty),$
$$\R:H^p(\DD) \hookrightarrow H^q(\DD) \ \  \text{if and only if} \ \  p\geq 2 \ \ \text{and} \ \   q>0,$$
where $\R:\mathscr{X}\hookrightarrow \mathscr{Y}$ denotes that $\R f\in \mathscr{Y}$ almost surely for any $f\in\mathscr{X}$.

We shall indeed treat three kinds of randomization methods in this note. A random variable  $X$ is called \textit{Bernoulli} if $\mathbb{P}(X=1)=\mathbb{P}(X=-1)=1/2$, \textit{Steinhaus} if it is uniformly distributed on the unit circle, and by $N(0,1),$ we mean the law of \textit{Gaussian} variable with zero mean and unit variance. Moreover, for $ X \in \{\text{Bernoulli, Steinhaus, }N(0,1)\},$ a standard $X$ sequence is a sequence of independent, identically distributed $X$ variables. Lastly, a \textit{standard random sequence} $\{X_{n}  \}_{n\geq0}$  refers to either a standard Bernoulli, Steinhaus or Gaussian $N(0,1)$ sequence.

In contrast to classical Littlewood's theorem (see Fig. 1),  the randomization for functions in the Bergman spaces $L^p_a(\DD)$ over the unit disk $\DD$ exhibits no improvement of regularity for any $p>0$ \cite{CFL22} (see Fig. 2). Actually,   the following Littlewood-type theorem for the Bergman  spaces $L^p_a(\DD)$  was established in \cite[Theorem 2]{CFL22}.

\begin{theorem}[\cite{CFL22}]\label{bergdisk}
Let $0<p,q<\infty$ and $\{ X_{n} \}$ be a standard random sequence. Then $ \R:L^p_a(\DD)\hookrightarrow L^q_a(\DD)  $ if and only if one of the following holds:
\begin{enumerate}
	\item [(i)]      $0<p<2,  \frac{1}{q}-\frac{2}{p}+\frac{1}{2}>0;\ \text{or}$
	\item [(ii)]    $ 2\leq p<\infty, q\leq p. $
\end{enumerate}
\end{theorem}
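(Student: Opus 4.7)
The plan is to convert the almost-sure inclusion into a deterministic integral estimate on the square function, and then analyze this estimate separately in the regimes $p\ge 2$ and $0<p<2$.

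By Fubini and the Khintchine--Kahane inequalities (which hold uniformly for each of the three admissible random sequences), for every $q>0$,
$$\E\|\R f\|_{L^q_a(\DD)}^q = \int_\DD \E|\R f(\xi)|^q\,dA(\xi) \asymp \int_\DD\Big(\sum_{n=0}^\infty|a_n|^2|\xi|^{2n}\Big)^{q/2}dA(\xi) = 2\int_0^1 M_2(r,f)^q\,r\,dr,$$
with constants depending only on $q$. Kahane's integrability theorem for sums of independent symmetric random variables upgrades this: $\R f\in L^q_a(\DD)$ almost surely if and only if the right-hand integral is finite. The whole problem thus reduces to characterising for which $(p,q)$ the deterministic estimate $\int_0^1 M_2(r,f)^q\,r\,dr\le C\|f\|_{L^p_a}^q$ holds for every $f\in L^p_a(\DD)$, i.e.\ when the square function $\xi\mapsto M_2(|\xi|,f)$ is controlled in $L^q(\DD)$ by $\|f\|_{L^p_a}$.

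For sufficiency in case (ii) ($p\ge 2$, $q\le p$), $M_2(r,f)\le M_p(r,f)$ by Hölder on each circle, and a second Hölder in $r$ with exponents $(p/q,p/(p-q))$ delivers the bound. For case (i) ($0<p<2$, $1/q-2/p+1/2>0$; note this already forces $q<2$), I would combine the circlewise interpolation $M_2(r,f)\le M_\infty(r,f)^{1-p/2}M_p(r,f)^{p/2}$ with the standard Bergman pointwise growth $M_\infty(r,f)\le C\|f\|_{L^p_a}(1-r)^{-2/p}$ to obtain
$$M_2(r,f)^q\le C\|f\|_{L^p_a}^{q(1-p/2)}\,(1-r)^{-q(2-p)/p}\,M_p(r,f)^{pq/2}.$$
A final Hölder in $r$ with exponent $2/q\ge 1$ reduces radial integrability to that of the weight $(1-r)^{-2q(2-p)/(p(2-q))}$, which is integrable on $(0,1)$ precisely when the exponent is less than $1$; this rearranges to the stated condition $1/q-2/p+1/2>0$, and the accumulated power of $\|f\|_{L^p_a}$ works out to exactly $q$.

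For necessity, explicit extremal families are needed. In the regime $p\ge 2$, $q>p$, normalized block polynomials $f_N(\xi)=c_N\sum_{N\le n<2N}\xi^n$ with $\|f_N\|_{L^p_a}\asymp 1$ have square functions concentrating on the thin annulus $|\xi|\asymp 1-1/N$ with amplitude $M_2(|\xi|,f_N)^2\asymp N$, so $\int_0^1 M_2(r,f_N)^q r\,dr$ diverges as $N\to\infty$, ruling out any uniform estimate. At the critical line $1/q=2/p-1/2$ in case (i), singular-kernel test functions $f_\gamma(\xi)=(1-\xi)^{-\gamma}$ with $\gamma$ approaching $2/p$ from below saturate the Bergman pointwise bound and hence the final Hölder step, matching the threshold exactly. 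The main obstacle is this sharpness step: the positive estimates assemble standard tools (Khintchine--Kahane, monotonicity/interpolation of $L^p$ means, pointwise Bergman growth, two Hölders), but producing test families whose $L^p_a$ and square-function $L^q$-norms differ by exactly the predicted gap, especially on the critical line in case (i), requires careful constant-tracking and a judicious choice of extremals.
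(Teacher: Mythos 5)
This statement is quoted by the paper from \cite{CFL22} and is not proved here, so your proposal can only be judged on its own terms. Your reduction to a deterministic square-function estimate (Fubini, Khintchine--Kahane, Kahane's integrability theorem, plus the closed-graph step needed to pass from ``finite for every $f$'' to a uniform bound --- a step you should state explicitly, since your critical-line argument relies on it) is the standard and correct first move, and both sufficiency arguments are sound: the two H\"older applications in case (ii), and in case (i) the chain $M_2\le M_\infty^{1-p/2}M_p^{p/2}$, $M_\infty(r,f)\lesssim\|f\|_{L^p_a(\DD)}(1-r)^{-2/p}$, then H\"older with exponent $2/q$, which lands exactly on the stated threshold. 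Your critical-line family also works in the uniform-bound formulation: one computes $\|f_\gamma\|^q_{L^p_a(\DD)}\asymp(2-p\gamma)^{-q/p}$ while $\int_0^1M_2(r,f_\gamma)^qr\,dr\asymp\frac{p}{q}(2-p\gamma)^{-1}$, so the ratio blows up like $(2-p\gamma)^{q/p-1}$ as $\gamma\uparrow 2/p$, precisely because $q<p$ on that line.

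The genuine gap is the necessity argument for $p\ge2$, $q>p$, i.e.\ exactly the regime that distinguishes the Bergman theorem from the Hardy one. Once your block polynomials are correctly normalized in $L^p_a(\DD)$, they are too weak: $\bigl\|\sum_{N\le n<2N}\xi^n\bigr\|_{L^p_a(\DD)}\asymp N^{1-2/p}$ for $p>1$, so $c_N\asymp N^{2/p-1}$ and the peak amplitude is $M_2(r,f_N)^2\asymp c_N^2N\asymp N^{4/p-1}$, not $\asymp N$ (that holds only at $p=2$). Hence
\begin{equation*}
\int_0^1M_2(r,f_N)^q\,r\,dr\;\asymp\;N^{\,q\left(\frac2p-\frac12\right)-1},
\end{equation*}
which stays \emph{bounded} in $N$ whenever $q\le 2p/(4-p)$, and for every $q$ when $p\ge4$. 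Since $2p/(4-p)>p$ exactly when $p>2$, your family cannot rule out the embedding for any $p>2$ with $p<q\le 2p/(4-p)$ (and for $p\ge 4$, for any $q>p$ at all). The powers $(1-\xi)^{-\gamma}$ have the same blind spot --- they detect only $q(2/p-1/2)>1$ --- because both families live on a single scale $1-r\asymp 1/N$, whereas the failure for $q>p$ is a many-scales phenomenon: it is the failure of $\ell^p(2^{-k})\subset\ell^q(2^{-k})$ for $q>p$. The fix is lacunary series, exactly as in \cite{CFL22} and as in this paper's own Dirichlet-series analogue (Lemma \ref{lacunary}, via \cite{MP83}): for $f=\sum_k b_k\xi^{2^k}$ one has $f\in L^p_a(\DD)\iff\sum_k2^{-k}|b_k|^p<\infty$ and $\int_0^1M_2(r,f)^qr\,dr<\infty\iff\sum_k2^{-k}|b_k|^q<\infty$, so taking $b_k=2^{k/q}$ produces, for every $q>p$, a single $f\in L^p_a(\DD)$ with $\R f\notin L^q_a(\DD)$ almost surely. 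Your positive estimates and your case (i) extremals can be kept; the block-polynomial step must be replaced.
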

\vskip.2cm
\begin{tikzpicture}
  \begin{scope}[shift={(0, 12)}]
    \draw (0,0) node[below left=2pt]{$0$} --++(2, 0) node[below=4pt]{$2$} -- ++(5, 0) node[below=4pt]{$\infty$} node[right=6pt]{$p$} -- ++(0, 7) -- ++(-7, 0) node[left=4pt]{$\infty$} node[above=4pt]{$q$} -- cycle;

    \filldraw[fill=gray!20, draw] (2, 0) -- (7, 0) -- (7, 7) -- (2, 7) -- cycle;

    \node[below = 1em] at (3.5, -0.5) {{\bf Fig. 1}};
  \end{scope}
  \begin{scope}[shift={(15, 12)}]
    \draw (0,0) node[below left=2pt]{$0$} --++(2, 0) node[below=4pt]{$2$} -- ++(5, 0) node[below=4pt]{$\infty$} node[right=6pt]{$p$} -- ++(0, 7) -- ++(-7, 0) node[left=4pt]{$\infty$} node[above=4pt]{$q$} -- cycle;

    \fill[gray!20] (0, 0) -- (2, 0) -- (2, 2) -- cycle;
    \draw (0, 0) -- (2, 0);
    \filldraw[fill=gray!20, draw] (2, 0) -- (7, 0) -- (7, 7) -- (2, 2) -- cycle;
    \coordinate (O) at (0, 0);
    \filldraw[fill=white, draw, densely dashed] (O) --plot[domain=0:2, smooth, samples=500](\x, {2*\x / (4 - \x)}) -- cycle;

    \node[below = 1em] at (3.5, -0.5) {{\bf Fig. 2}};
  \end{scope}
\end{tikzpicture}
\vskip.2cm

 Recently,  the analogy version of Littlewood-type theorem has been established in the unit disk setting of Dirichlet spaces \cite{CFL22}, generalized mixed norm spaces \cite{K22} and in the complex plane setting of  Fock spaces   \cite{FT23}.   Note that the mentioned above are function spaces of power series. In this paper, we are going to initiate the investigation of the corresponding phenomenon for random Dirichlet series, which may appear more complexities than power series. Probably the simplest power series is $\sum z^{n}$, whereas the corresponding Dirichler series is $\zeta(s)=\sum n^{-s}$, the Riemann $\zeta$-function, which is automatically interesting and  far less trivial to study. Moreover, there is an important difference: all the radius of convergence coincide for power series, but Dirichlet series has several abscissas of convergence. 

By the end of the 1990s a deep relation between Dirichlet series and different parts of analysis (mainly harmonic and function analysis) was discovered. Ever since, this interaction has shown to be very fruitful, with interesting results in both sides. A new field emerged intertwining the classical work in novel ways with modern  infinite dimensional holomorphy, probability theory as well as analytic number theory.  In this development the  key element is the Hardy spaces of Dirichlet series. Let us  briefly recall the definition of these spaces below. Let $\D$ be the space of Dirichlet series $\sum_{n=1}^{\infty}a_nn^{-s}$  that converge on some half plane $\C_{\theta}:=\{s\in\C:\Re s>\theta\}$, $\theta\in\mathbb{R}$, and let $\mathcal{P}$ be the space of Dirichlet polynomials $\sum_{n=1}^Na_nn^{-s}$. In 1997,  Hedenmalm, Lindqvist and Seip \cite{HLS} introduced the Hardy space $\h^2$ of Dirichlet series and applied it to the Beurling--Wintner problem, which is defined by
$$\h^2:=\left\{f(s)=\sum_{n=1}^{\infty}a_nn^{-s}:
\|f\|_{\h^2}:=\left(\sum_{n=1}^{\infty}|a_n|^2\right)^{1/2}<\infty\right\}.$$
Later, Bayart \cite{Ba} and  Bondarenko et al. \cite{BBSS}  defined Hardy spaces $\h^p$ for general $p>0$. By a basic observation of Bohr, the multiplicative structure of the integers allows us to view an ordinary Dirichlet series of the form $f(s)=\sum_{n=1}^{\infty}a_nn^{-s}$
as a formal power series of infinitely many variables. Indeed, by the transformation $z_{j}=p_{j}^{-s}$ (here $p_{j}$ is the $j$th prime number) and the fundamental theorem of arithmetic, we have the Bohr lift correspondence,
$$f(s)=\sum_{n=1}^{\infty}a_nn^{-s} \ \ \   \longleftrightarrow \ \ \  \mathscr{B}f(z):=\sum_{n=1}^{\infty}a_n z^{\nu(n)}, $$
where $z=(z_1,z_2,\cdots)$, $\nu(n)=(\nu_1(n),\nu_2(n),\cdots)$ is the multi-index such that $n=p_1^{\nu_1(n)}p_2^{\nu_2(n)}\cdots$, and we use the multi-index notation $z^{\nu(n)}=z_1^{\nu_1(n)}z_2^{\nu_2(n)}\cdots$. Then, for $0<p<\infty$, the Hardy space $\h^p$ is defined as the completion of $\mathcal{P}$ with respect to the (quasi-)norm
$$\|P\|_{\h^p}:=\left(\int_{\TT}|\mathscr{B}P(z)|^pdm_{\infty}(z)\right)^{1/p}, \quad P\in\mathcal{P},$$
where $m_{\infty}$ is the Haar measure on the countably infinite-dimensional torus $\TT$ induced by the normalized Lebesgue measure $m$ on the unit circle $\T$. Recall that every element in $\h^p$ is a Dirichlet series that converges uniformly on $\C_{\frac{1}{2}+\epsilon}$ for any $\epsilon>0$, which means that $\h^p $ can be viewed as a space of analytic functions on the half-plane $\C_{1/2}.$ Furthermore, $\h^q$ is contractively embedded into $\h^p$ for $0<p\leq q<\infty$. See \cite{Ba,BBSS,DGMS,HLS,QQ} for more information of these spaces.

Very recently, a study of Bergman spaces of Dirichlet series began in \cite{BL}, which  appears some very different phenomena compared to the Hardy spaces of Dirichlet series. Given $0<p<\infty$ and $\alpha>-1$, the Bergman space $\A^p_{\alpha}$ is defined to be the completion of $\mathcal{P}$ with respect to the (quasi-)norm
$$\|P\|_{\A^p_{\alpha}}:=\left(\int_0^{+\infty}\|P_{\sigma}\|^p_{\h^p}d\mu_{\alpha}(\sigma)\right)^{1/p},\quad P\in\mathcal{P},$$
where $P_{\sigma}(\cdot):=P(\cdot+\sigma)$ and
$$d\mu_{\alpha}(\sigma):=\frac{2^{\alpha+1}}{\Gamma(\alpha+1)}\sigma^{\alpha}e^{-2\sigma}d\sigma.$$
 It is worth mentioning that   $\A^p_{\alpha}$ is a space of Dirichlet series converging on $\C_{1/2}$ as well; see \cite[Theorem 5]{BL}.

 For a Dirichlet series $f(s)=\sum_{n=1}^{\infty}a_nn^{-s}$ in $\D$ and a standard random sequence $\{X_n\}$, the randomization of $f$ is defined by
$$\R f(s):=\sum_{n=1}^{\infty}a_nX_nn^{-s}.$$ As we expected,  the following Littlewood-type theorem for Hardy spaces of Dirichlet series improves the regularity as same as the classical Littlewood theorem (see Fig. 1).

\begin{theorem}\label{remHardy}
Assume  $0<p,q<\infty.$ Let $\{X_n\}$ be a standard random sequence. Then, $\R f\in\h^q$ almost surely for each $f\in\h^p$ if and only if $p\geq2$.
\end{theorem}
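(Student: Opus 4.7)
The plan is to handle the two directions separately using the Bohr lift, which identifies $\h^p$ isometrically with the Hardy space $H^p(\TT^\infty)$ on the infinite torus. Sufficiency will rest on Khintchine--Kahane inequalities, while necessity will be a one-variable reduction to classical Littlewood.

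\emph{Sufficiency ($p\geq 2$).} I would first reduce to Dirichlet polynomials and estimate the $q$th moment directly. For $P=\sum_{n\leq N} a_n n^{-s}\in\mathcal{P}$, Bohr-lifting and Fubini give
\[
\E\|\R P\|_{\h^q}^q=\int_{\TT}\E\Bigl|\sum_{n\leq N} a_n X_n z^{\nu(n)}\Bigr|^q dm_{\infty}(z).
\]
On $\TT$ each factor $z^{\nu(n)}$ is unimodular, so the inner integrand is a linear combination of the $X_n$'s with moduli $|a_n|$. Khintchine--Kahane, valid uniformly for Bernoulli, Steinhaus, and Gaussian sequences with complex coefficients, then yields
\[
\E\Bigl|\sum_{n\leq N} a_n X_n z^{\nu(n)}\Bigr|^q\lesssim_q\Bigl(\sum_{n\leq N}|a_n|^2\Bigr)^{q/2}=\|P\|_{\h^2}^q.
\]
Integrating and invoking the contractive embedding $\h^p\hookrightarrow\h^2$ for $p\geq 2$ gives $\E\|\R P\|_{\h^q}^q\lesssim\|P\|_{\h^p}^q$. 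A density/monotone-convergence argument passes this inequality to all $f\in\h^p$, so $\E\|\R f\|_{\h^q}^q<\infty$ and $\R f\in\h^q$ almost surely.

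\emph{Necessity ($p<2$).} The strategy is to embed the classical single-disk Littlewood theorem into the Dirichlet setting. Given $0<p<2$ and any $q>0$, the classical result (Theorem analogous to Fig.~1) supplies some $g(z)=\sum_{k\geq 0}b_k z^k\in H^p(\DD)$ whose randomization $\sum b_k X_k z^k$ fails to lie in $H^q(\DD)$ on a set of positive probability. Define
\[
f(s):=g(2^{-s})=\sum_{k=0}^{\infty}b_k\, 2^{-ks}\in\D.
\]
Its Bohr lift $\mathscr{B}f(z_1,z_2,\dots)=g(z_1)$ depends only on the first coordinate, hence $\|f\|_{\h^r}=\|g\|_{H^r(\DD)}$ for every $r>0$, so in particular $f\in\h^p$. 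Similarly $\mathscr{B}(\R f)(z)=\sum b_k X_k z_1^k$, so $\|\R f\|_{\h^q}=\|\R g\|_{H^q(\DD)}$. By the Kolmogorov zero--one law applied to the tail event $\{\R g\in H^q(\DD)\}$, its probability is either $0$ or $1$; since it is not $1$, it is $0$. Consequently $\R f\notin\h^q$ almost surely, which contradicts the conclusion.

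\emph{Expected obstacles.} The main delicate point is purely bookkeeping: justifying the Fubini swap and the passage from polynomials to all of $\h^p$ when $q<1$, where the quasi-norm requires some care; this is handled by truncation and the contractive embedding. Conceptually the proof has no real difficulty because the multiplicative structure of $n^{-s}$ is invisible on $\TT$ (every monomial $z^{\nu(n)}$ has modulus one), so the Dirichlet-series randomization behaves exactly like a random Fourier series over an abelian group, and the bounds that make classical Littlewood succeed on $\DD$ transfer without modification.
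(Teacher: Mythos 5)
Your proof is correct, but its overall architecture differs from the paper's. The paper never argues this theorem directly: it derives it as an immediate corollary of the symbol-space identity $(\h^q)_{\star}=\h^2$ (Theorem \ref{symbolA}(i), obtained from the six-way equivalence in Section \ref{sec3} and Corollary \ref{sym}), whose hard implication --- that $\R f\in\h^q$ a.s. forces $f\in\h^2$ --- is proved via the Marcinkiewicz--Zygmund--Kahane theorem (Lemma \ref{random-partial}), a Fernique-type exponential-moment lemma from \cite{CFL22}, and then the same Fubini/Khintchine--Kahane computation you perform; the theorem then reduces to the fact that $\h^p\subset\h^2$ iff $p\geq2$. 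Your sufficiency direction is essentially the paper's computation specialized to one implication (Bohr lift, Fubini, Khintchine--Kahane, contractive embedding $\h^p\subset\h^2$ for $p\geq 2$). Your necessity direction, however, is genuinely different and lighter: instead of proving "a.s. membership $\Rightarrow f\in\h^2$", you transplant the classical one-variable Littlewood counterexample through the isometric embedding $g\mapsto g(2^{-s})$ of $H^p(\DD)$ onto the subspace of $\h^p$ supported on powers of $2$ --- the same device the paper itself uses in Lemma \ref{lacunary}, Lemma \ref{grow} and Proposition \ref{hardy} --- and upgrade positive-probability failure to a.s. failure by a zero--one law. This buys a shorter, more self-contained proof of this one theorem (no MZK theorem, no Fernique lemma); what it does not buy is the full characterization $(\h^{p,q}_{\alpha})_{\star}=\h^{2,q}_{\alpha}$, which the paper needs anyway for Theorems \ref{r-embedding} and \ref{Rmix}.

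Two glossed steps deserve a precise sentence each. First, "density/monotone convergence" in your sufficiency step is really a completeness argument: for polynomials $P_j\to f$ in $\h^2$, your moment bound makes $\{\R P_j\}$ Cauchy in $L^q(\Omega;\h^q)$, so along a subsequence it converges a.s. in $\h^q$, and the limit is identified with $\R f$ because point evaluations on $\C_{1/2}$ are bounded on $\h^q$ while $\R P_j(s)\to\R f(s)$ pointwise on $\C_{1/2}$ surely (Cauchy--Schwarz and $\{a_n\}\in\ell^2$). Second, Kolmogorov's zero--one law applies because $\{\R g\in H^q(\DD)\}$ is a tail event: $H^q(\DD)$ is a vector space containing the polynomials, so membership is unaffected by altering finitely many coefficients. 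Both points are routine, and with them your argument is complete.
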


In contrast to Theorem \ref{remHardy}, the following Theorem \ref{r-embedding} indicates that the randomization of Dirichlet series in Bergman spaces exhibits no improvement of regularity for any $p>0$ (see Fig. 3-5). Surprisingly,  the loss of regularity for $p<2$ is very drastic and pathological compared with the unit disk setting (see Fig. 2), in the sense that for any $0<q<\infty$ there exists $f\in\A^p_{\alpha}\setminus\A^2_{\alpha}$ such that $\R f\notin\A^q_{\alpha}$ almost surely. The reason may be that, via the Bohr lift, Dirichlet series behave as power series of infinitely many variables.

 \begin{theorem}\label{r-embedding}
	Assume  $0<p,q<\infty$ and $\alpha,\beta>-1.$ Let $\{X_n\}$ be a standard random sequence. Then, $\R f\in\A^q_{\beta}$ almost surely for each $f\in\A^p_{\alpha}$ if and only if one of the following holds:
	\begin{enumerate}
		\item [(i)] $p\geq2$ and $\frac{\alpha+1}{p}<\frac{\beta+1}{q}$;\  or
		\item [(ii)] $q\geq p\geq2$ and $\frac{\alpha+1}{p}=\frac{\beta+1}{q}$.
	\end{enumerate}
\end{theorem}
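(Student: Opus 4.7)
The plan is to recast Theorem \ref{r-embedding} as an embedding problem between $\A^p_\alpha$ and a mixed norm space of Dirichlet series, and then invoke the mixed norm inclusion result alluded to in the abstract. My first step is a Khinchine-type reduction. For $f(s)=\sum a_n n^{-s}$, $\sigma>0$, and $z\in\TT$, the Bohr lift of the randomized shifted series is
\[
\mathscr{B}(\R f_{\sigma})(z)=\sum_{n=1}^{\infty}a_n X_n n^{-\sigma}z^{\nu(n)}.
\]
Since $|z^{\nu(n)}|=1$ on $\TT$, Khinchine's inequality, valid uniformly for the three laws considered, gives
\[
\E_X\bigl|\mathscr{B}(\R f_\sigma)(z)\bigr|^q\asymp\Bigl(\sum_n|a_n|^2 n^{-2\sigma}\Bigr)^{q/2}=\|f_\sigma\|_{\h^2}^q.
\]
Integrating in $z\in\TT$ and then in $\sigma$ against $d\mu_\beta$, Fubini yields
\[
\E\|\R f\|_{\A^q_\beta}^q\asymp\int_0^\infty\|f_\sigma\|_{\h^2}^q\,d\mu_\beta(\sigma)=:\|f\|_{M(2,q,\beta)}^q,
\]
the right side being the (quasi-)norm in the mixed norm space $M(2,q,\beta)$ with $\h^2$ horizontal part. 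Combining this with the Kahane--Khinchine equivalence of moments and a standard zero-one law for random Dirichlet series, I upgrade the above to the equivalence: $\R f\in\A^q_\beta$ almost surely if and only if $\|f\|_{M(2,q,\beta)}<\infty$. Theorem \ref{r-embedding} thus reduces to characterizing the continuous embedding $\A^p_\alpha\hookrightarrow M(2,q,\beta)$.

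My second step handles sufficiency. For $p\geq 2$ the contractive inclusion $\h^p\subset\h^2$ gives pointwise $\|f_\sigma\|_{\h^2}\leq\|f_\sigma\|_{\h^p}$, so it suffices to establish the chain $\A^p_\alpha=M(p,p,\alpha)\hookrightarrow M(p,q,\beta)\hookrightarrow M(2,q,\beta)$, where the second arrow is automatic from $\h^p\subset\h^2$ and the first is a purely vertical weighted-Bergman-type inclusion on $(0,\infty)$. The paper's main technical tool on mixed norm inclusions is calibrated precisely to deliver the first arrow under the strict scaling $\tfrac{\alpha+1}{p}<\tfrac{\beta+1}{q}$, corresponding to case (i), and under the equality $\tfrac{\alpha+1}{p}=\tfrac{\beta+1}{q}$ when $q\geq p$, corresponding to case (ii); the restriction $q\geq p$ at equality arises because the Minkowski-type interchange of integrations required in that borderline case only runs in that direction.

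My third step handles necessity, where the main obstacle lies. For $p\geq 2$, violations of the scaling conditions in (i) or (ii) are detected by testing the equivalence $\E\|\R f\|_{\A^q_\beta}^q\asymp\|f\|_{M(2,q,\beta)}^q$ against explicit families of Dirichlet series that exhibit the sharpness of the mixed norm embedding; monomial test functions $f(s)=n^{-s}$ already force $\tfrac{\alpha+1}{p}\leq\tfrac{\beta+1}{q}$, and prime-supported Dirichlet series whose Bohr lift is essentially a degree-one Steinhaus-like sum can be used to separate the strict and equality cases. The hardest part, and what I expect to be the main obstacle of the proof, is the exclusion of $0<p<2$: for every such $p$ and every $q\in(0,\infty)$, I must produce some $f\in\A^p_\alpha$ whose randomization $\R f$ fails to lie in $\A^q_\beta$ almost surely, irrespective of how large $\beta$ is taken relative to $\alpha$. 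The idea is to exploit the infinite-dimensional geometry of $\TT$ through high-degree homogeneous Dirichlet polynomials: choosing $f$ whose Bohr lift is a degree-$d$ polynomial on $\TT$ of Kahane--Salem--Zygmund or Bohnenblust--Hille type produces a gap between $\|f\|_{\h^p}$ and $\|f\|_{\h^2}$ that grows with $d$, and tuning $d$ as a function of $q,\alpha,\beta$ yields $f\in\A^p_\alpha$ with $\|f\|_{M(2,q,\beta)}=+\infty$, whence the Khinchine reduction forces $\R f\notin\A^q_\beta$ almost surely. The freedom to pick arbitrarily large $d$, which has no counterpart in the single-variable disk setting behind Theorem \ref{bergdisk}, is precisely what produces the pathological loss of regularity for $p<2$ asserted by the theorem.
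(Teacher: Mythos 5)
Your Steps 1 and 2 are essentially the paper's own route: the paper proves exactly your randomization equivalence ($\R f\in\A^q_\beta$ a.s.\ $\Longleftrightarrow f\in\h^{2,q}_\beta$, Corollary \ref{sym}) and then reads off the theorem from the inclusion result (Theorem \ref{em} with $u=2$, $v=q$); one small imprecision is that the upgrade from almost sure membership to finiteness of moments does not follow from a zero--one law plus Kahane--Khinchine alone --- the paper routes it through almost sure convergence of partial sums (the Marcinkiewicz--Zygmund--Kahane theorem, Lemma \ref{random-partial}) and a Fernique-type lemma --- but this is repairable. The genuine gap is in Step 3, precisely at the point you single out as the main obstacle: the exclusion of $0<p<2$. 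The object you propose --- a single $f$ whose Bohr lift is a degree-$d$ polynomial on $\TT$, with $d$ tuned as a function of $q,\alpha,\beta$ --- cannot satisfy $f\in\A^p_\alpha$ and $\|f\|_{M(2,q,\beta)}=+\infty$. If ``degree-$d$ polynomial'' means a Dirichlet polynomial (finitely many terms), then $f\in\h^2$ and Lemma \ref{con-sigma}(1) gives $\|f\|_{\h^{2,q}_\beta}\leq\|f\|_{\h^2}<\infty$, for every $d$. If instead it means a $d$-homogeneous Dirichlet series with possibly infinitely many terms, then the hypercontractive inequality of Bayart (combined with log-convexity of the $\h^p$-norms when $p<1$) gives $\|g\|_{\h^2}\leq C_p^d\|g\|_{\h^p}$ for all $d$-homogeneous $g$, with $C_p$ independent of the number of variables; hence $\|f_\sigma\|_{\h^2}\leq C_p^d\|f_\sigma\|_{\h^p}$ for all $\sigma>0$, and since the case you must refute includes $\tfrac{\alpha+1}{p}<\tfrac{\beta+1}{q}$ (indeed $\beta$ arbitrarily large), the vertical estimate behind the sufficiency direction (Lemma \ref{mean-growth}) then shows that every $d$-homogeneous $f\in\A^p_\alpha$ \emph{does} lie in $\h^{2,q}_\beta$. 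So for any fixed degree there is no counterexample: Kahane--Salem--Zygmund/Bohnenblust--Hille gaps, which are bounded by $C_p^d$ uniformly in the number of variables once $d$ is fixed, are exactly the wrong kind of gap here.

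What is needed is a single function of \emph{unbounded} degree for which $\|f_\sigma\|_{\h^2}$ blows up arbitrarily fast as $\sigma\to0^+$ while $f$ stays in $\h^p$. The paper achieves this with the Euler-type product $f_3(s)=\prod_{j=1}^k(1-p_j^{-s})^{-\eta}$ with $\tfrac12<\eta<\tfrac1p$: it lies in $\h^p\subset\A^p_\alpha$ because $p\eta<1$, while the Forelli--Rudin estimate gives $\|(f_3)_\sigma\|_{\h^2}^2\gtrsim(1-p_k^{-2\sigma})^{k(1-2\eta)}\asymp\sigma^{k(1-2\eta)}$ near $\sigma=0$, so taking $k>\tfrac{2(\beta+1)}{(2\eta-1)q}$ forces $\int_0^1\|(f_3)_\sigma\|^q_{\h^2}\sigma^\beta\,d\sigma=\infty$. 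Your intuition about exploiting the infinite-dimensional geometry is correct, and a block version of your idea can be made to work (sum homogeneous pieces of growing degree $m$, e.g.\ $\prod_{j=1}^m(p_{2j-1}^{-s}+p_{2j}^{-s})$, with geometric coefficients), but both unbounded degree and a number of variables growing with $(\beta+1)/q$ are indispensable; the ``tune one degree $d$'' construction collapses under hypercontractivity. The remaining pieces of your Step 3 (monomials plus a closed-graph argument for the non-strict scaling inequality; prime-supported, degree-one series for the borderline case $\tfrac{\alpha+1}{p}=\tfrac{\beta+1}{q}$, $q<p$) differ from the paper's lacunary test functions (Lemma \ref{lacunary}) but appear workable with some effort.
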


\vskip.2cm
\begin{center}
\begin{tikzpicture}
	\begin{scope}
		\draw (0,0) node[below left=2pt]{$0$} --++(2, 0) node[below=4pt]{$2$} -- ++(3, 0) node[below=4pt]{$\infty$} node[right=6pt]{$p$} -- ++(0, 6) -- ++(-5, 0) node[left=4pt]{$\infty$} node[above=4pt]{$q$} -- cycle;
		
		\filldraw[fill=gray!20, draw] (2, 0) -- (5, 0) -- (5, 6) -- (2, 2.4) -- cycle;
		
		\node[above left] at (3.8, 3.8) {$q=\frac{\beta + 1}{\alpha + 1}p$};
		
		\node[below = 2em] at (2.5, 0) {$\alpha < \beta$};
		\node[below = 3em] at (2.5, -0.5) {{\bf Fig. 3}};
	\end{scope}
	
	\begin{scope}[shift={(9, 0)}]
		\draw (0,0) node[below left=2pt]{$0$} --++(2, 0) node[below=4pt]{$2$} -- ++(4, 0) node[below=4pt]{$\infty$} node[right=6pt]{$p$} -- ++(0, 6) -- ++(-6, 0) node[left=4pt]{$\infty$} node[above=4pt]{$q$} -- cycle;
		
		\filldraw[fill=gray!20, draw] (2, 0) -- (6, 0) -- (6, 6) -- (2, 2) -- cycle;
		
		\node[above left] at (3.5, 3.1) {$q=p$};

		\node[below = 2em] at (3.5, 0) {$\alpha = \beta$};
		\node[below = 3em] at (3.5, -0.5) {{\bf Fig. 4}};
	\end{scope}
	
	\begin{scope}[shift={(19, 0)}]

		\draw (0,0) node[below left=2pt]{$0$} --++(2, 0) node[below=4pt]{$2$} -- ++(4, 0) node[below=4pt]{$\infty$} node[right=6pt]{$p$} -- ++(0, 5.7) -- ++(-6, 0) node[left=4pt]{$\infty$} node[above=4pt]{$q$} -- cycle;
		
		\fill[gray!20] (2, 0) -- (6, 0) -- (6, 5.7) -- (2, 1.9) -- cycle;
		
		\draw (2, 1.9) -- ++(0, -1.9) -- ++(4, 0) -- ++(0,5.7);
		
		\draw[dashed] (2, 1.9)node[draw, solid, fill=white, circle, inner sep = 2pt]{} -- ++ (4, 3.8);
		
		\node[above left] at (4.2, 3.5) {$q=\frac{\beta + 1}{\alpha + 1}p$};
		
		\node[below = 2em] at (3.5, 0) {$\alpha > \beta$};
		\node[below = 3em] at (3.5, -0.5) {{\bf Fig. 5}};
	\end{scope}
\end{tikzpicture}
\end{center}
\vskip.3cm

The following characterizes the random embedding $\R :  \h^p \hookrightarrow \A^q_{\alpha} $ for $0<p,q<\infty$ and  $\alpha>-1$, which shows no improvement of Theorem   \ref{remHardy} and is quite different from  the case of power series (see \cite[Theorem 19]{CFL22}).  For curious readers, the answer to $\R:\A^p_{\alpha}\hookrightarrow\h^q$ is trivial due to  Proposition \ref{hardy}.

\begin{theorem}\label{Rmix}
 Assume $0<p,q<\infty$ and  $\alpha>-1.$ Let $\{X_n\}$ be a standard random sequence.    Then, $\R f\in \A^q_{\alpha}  $ almost surely for each $f\in \h^p $ if and only if $p\geq2$.
\end{theorem}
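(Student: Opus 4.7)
My plan is first to rephrase the almost-sure condition as a deterministic integral inequality on the coefficients of $f$, using a Khintchine-Kahane reduction. Writing $f=\sum_n a_n n^{-s}$ and expanding
\[
\E\|\R f\|_{\A^q_\alpha}^q \;=\; \int_0^\infty\!\!\int_{\TT}\E\Bigl|\sum_n a_n X_n z^{\nu(n)}n^{-\sigma}\Bigr|^q dm_\infty(z)\,d\mu_\alpha(\sigma),
\]
the Khintchine-Kahane inequality applied to the innermost expectation (uniform across the three standard random sequences) gives, using $|z^{\nu(n)}|=1$ on $\TT$,
\[
\E\|\R f\|_{\A^q_\alpha}^q \;\asymp_q\; \int_0^\infty\Bigl(\sum_n |a_n|^2 n^{-2\sigma}\Bigr)^{q/2}d\mu_\alpha(\sigma).
\]
A Kolmogorov zero-one argument (the event $\{\R f\in\A^q_\alpha\}$ is invariant under finite changes of the $X_n$) combined with the Kahane-Khintchine equivalence of $L^r$-moments of $\|\R f\|_{\A^q_\alpha}$ upgrades almost-sure finiteness to a finite $L^q$-moment, after which a closed-graph application yields the uniform bound
\[
(\ast)\qquad \int_0^\infty\Bigl(\sum_n|a_n|^2 n^{-2\sigma}\Bigr)^{q/2}d\mu_\alpha(\sigma) \;\le\; C\|f\|_{\h^p}^q \qquad (f\in\h^p).
\]

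The sufficiency $p\ge 2\Rightarrow(\ast)$ is then quick: the contractive embedding $\h^p\hookrightarrow\h^2$ forces $\sum_n|a_n|^2\le\|f\|_{\h^p}^2$, so the integrand in $(\ast)$ is bounded pointwise in $\sigma$ by $\|f\|_{\h^p}^q$, and $d\mu_\alpha$ is a probability measure. (Equivalently, Theorem \ref{remHardy} yields $\R f\in\h^q$ a.s., and $\h^q\hookrightarrow\A^q_\alpha$ is contractive by the monotonicity $\|f_\sigma\|_{\h^q}\le\|f\|_{\h^q}$.)

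To refute $(\ast)$ when $0<p<2$, I will test it against the Riesz product Dirichlet polynomials
\[
f_M(s) := \prod_{k=1}^M (1+p_k^{-s}), \qquad M\ge 1,
\]
whose Bohr lift $\prod_{k=1}^M(1+z_k)$ tensorizes across independent coordinates of $\TT$. This tensorization gives $\|f_M\|_{\h^p}=c_p^M$ with $c_p:=\|1+z\|_{L^p(\T)}$; since $p/2<1$ and $|1+z|$ is nonconstant on $\T$, strict Jensen's inequality gives the key estimate $c_p<\sqrt 2$. The coefficients of $f_M$ are supported on squarefree integers with prime factors in $\{p_1,\ldots,p_M\}$, so
\[
S_{f_M}(\sigma):=\sum_n|a_n|^2 n^{-2\sigma}=\prod_{k=1}^M(1+p_k^{-2\sigma}),
\]
and for $\sigma\le(C_0\log p_M)^{-1}$ one has $p_k^{-2\sigma}\ge e^{-2/C_0}$ uniformly in $k\le M$, hence $S_{f_M}(\sigma)\ge(1+e^{-2/C_0})^M$. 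Integrating over this short interval against $d\mu_\alpha\asymp\sigma^\alpha d\sigma$ gives
\[
\int_0^\infty S_{f_M}^{q/2}\,d\mu_\alpha \;\gtrsim_{\alpha, C_0}\; \frac{(1+e^{-2/C_0})^{qM/2}}{(\log p_M)^{\alpha+1}}.
\]
Choosing $C_0$ so large that $\rho:=(1+e^{-2/C_0})^{1/2}/c_p>1$ (possible since $c_p<\sqrt 2$) converts $(\ast)$ into the absurd bound $\rho^{qM}\lesssim(\log p_M)^{\alpha+1}$, which fails as $M\to\infty$ (exponential against polynomial), giving the desired contradiction. The main obstacle is precisely calibrating this exponential-vs-polynomial race: the strict inequality $c_p<\sqrt 2$ and the tensorial growth $(1+e^{-2/C_0})^{M/2}$ only combine favorably because the Bohr lift decouples over infinitely many independent variables. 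The pathology cannot be witnessed by a single-variable lift such as $f(s)=g(2^{-s})$ for $g\in H^p(\T)\setminus H^2(\T)$, which only yields the harmless growth $S_f(\sigma)\asymp\log(1/\sigma)$ near $\sigma=0$; this is what makes Theorem \ref{Rmix} genuinely a statement about the infinite-dimensional geometry of $\TT$ rather than a consequence of its one-variable analogue.
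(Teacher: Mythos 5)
Your argument is correct, and its essential part --- the refutation of necessity for $0<p<2$ --- takes a genuinely different route from the paper's. The reduction itself coincides with the paper's: your Khintchine--Kahane/Fubini computation and the zero-one upgrade are exactly what the paper packages as Corollary \ref{sym}, namely $(\A^q_\alpha)_\star=\h^{2,q}_\alpha$ (be aware that passing from almost-sure membership to finite moments is not just a moment-equivalence statement; the paper needs Lemma \ref{random-partial}, i.e.\ Marcinkiewicz--Zygmund--Kahane, plus a Fernique-type lemma, so your one-line gloss hides a real step, though a standard one), and your sufficiency argument is the paper's $\h^p\subset\h^2\subset\A^q_\alpha$ for $p\ge 2$. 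Where you diverge is the counterexample. The paper proves the set-theoretic non-inclusion $\h^p\not\subset\h^{2,q}_\alpha$ directly (Proposition \ref{hardy} via the proof of Theorem \ref{em}) by exhibiting a single function, the finite Euler product $f_3(s)=\prod_{j=1}^k(1-p_j^{-s})^{-\eta}$ with $\tfrac12<\eta<\tfrac1p$ and $k$ a large but fixed integer depending on $\alpha,q,\eta$; a Forelli--Rudin estimate gives $\|(f_3)_\sigma\|_{\h^2}^2\gtrsim (1-p_k^{-2\sigma})^{k(1-2\eta)}$, and the $\sigma$-integral diverges at $0$ once $k$ is large enough. You instead make the inclusion quantitative: closed graph converts it into the uniform bound $(\ast)$, which you destroy with the Riesz-product polynomials $f_M=\prod_{k=1}^M(1+p_k^{-s})$ via tensorization of the Bohr lift, the strict Jensen inequality $c_p<\sqrt2$, and the exponential-versus-polynomial race against $(\log p_M)^{\alpha+1}$. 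The two constructions exploit the same phenomenon --- infinitely many independent Bohr variables, with your $C_0$ and the paper's $k$ both degenerating as $p\uparrow 2$ --- but trade differently: the paper's route is constructive (it hands you one explicit $f\in\h^p$ with $\R f\notin\A^q_\alpha$ almost surely, using finitely many primes at the cost of fractional powers and Forelli--Rudin), whereas yours uses only Dirichlet polynomials and elementary one-variable estimates, at the cost of a soft closed-graph contradiction that does not by itself produce the bad function (a gliding-hump argument applied to your $f_M$ would). One caveat: your closing aside is inaccurate as stated. For $g\in H^p(\DD)\setminus H^2(\DD)$ the growth of $S_f(\sigma)$ depends on $g$; for instance $g(z)=(1-z)^{-\gamma}$ with $\tfrac12<\gamma<\tfrac1p$ gives $S_f(\sigma)\asymp\sigma^{1-2\gamma}$, not $\log(1/\sigma)$. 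The correct statement is that a single-prime example has $S_f(\sigma)\lesssim\sigma^{-(2/p-1)}\|g\|_{H^p(\DD)}^2$, so it cannot witness the failure when $q(\tfrac1p-\tfrac12)<\alpha+1$, and this is why genuinely many primes are needed in that regime; since the aside is not load-bearing, this does not affect your proof.
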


In order to establish the above Littlewood-type theorems for random Dirichlet series, we need to determine when a random Dirichlet series almost surely belongs to the Hardy spaces $\h^p$ or the Bergman spaces $\A^p_{\alpha}$. This very active topic for random analytic functions has been investigated in the unit disk setting such as Hardy spaces \cite{K85,Li26,Li30}, the $H^{\infty}$ space \cite{B63, K85, MP78, MP81, SZ54}, the Bloch space \cite{ACP74, G}, BMOA \cite{D85,NP23, S81}, Dirichlet spaces \cite{CFL22,CSU} and Bergman spaces \cite{CFL22,K22}; the complex plane  setting of Fock spaces \cite{FT23}; and the Dirichlet series setting of BMOA \cite{KQSS} recently. The study of  random Dirichlet series can be traced back to \cite{CDS,CMST,H39,K85}, especially the  natural boundary problem \cite{BM} and the distribution of real zeros \cite{A19,AFM}. In the viewpoint of Banach spaces, we cannot  find any references devoted to random Dirichlet series of Bergman spaces. Generally, we here introduce the  mixed norm space $\h^{p,q}_{\alpha}$ of Dirichlet series, which is the natural generalization of the Bergman space $\A^p_{\alpha}$ and plays a prominent role in our investigation. Given $0<p,q<\infty$ and $\alpha>-1$, the mixed norm space $\h^{p,q}_{\alpha}$ is defined as the completion of $\mathcal{P}$ with respect to the (quasi-)norm
$$\|P\|_{\h^{p,q}_{\alpha}}:=\left(\int_0^{+\infty}\|P_{\sigma}\|^q_{\h^p}d\mu_{\alpha}(\sigma)\right)^{1/q}, \quad P\in\mathcal{P}.$$
We also write $\h^{p,\infty}_{\alpha}:=\h^p$ for convenience. It is clear that $\h^{p,p}_{\alpha}=\A^p_{\alpha}$. Moreover, $\h^{p,q}_{\alpha}$ is a Banach space if $p,q\geq1$; otherwise, it is an $r$-Banach space with $r=\min\{p,q\}$.

  Based on the Marcinkiewicz--Zygmund--Kahane theorem, the Fernique theorem and the Khintchine--Kahane inequality for $r$-Banach spaces, we have the following result.

\begin{theorem}\label{symbolA}
Assume $0<p<\infty$ and $\alpha>-1.$ Let $\{X_n\}$ be a standard random sequence. Suppose that $f\in\D$. Then
\begin{enumerate}
	\item [(i)] $\R f\in\h^p$ almost surely if and only if $f\in\h^2$;
	\item [(ii)] $\R f\in\A^p_{\alpha}$ almost surely if and only if $f\in\h^{2,p}_{\alpha}$.
\end{enumerate}
\end{theorem}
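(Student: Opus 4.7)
\textbf{Proof plan for Theorem~\ref{symbolA}.} The plan is to reduce both (i) and (ii) to the moment estimates
\begin{align*}
	E\|\R P\|_{\h^p}^p &\asymp_p \|P\|_{\h^2}^p, \\
	E\|\R P\|_{\A^p_\alpha}^p &\asymp_{p,\alpha} \|P\|_{\h^{2,p}_\alpha}^p,
\end{align*}
valid for every Dirichlet polynomial $P\in\mathcal{P}$, and then pass from $L^p$-moments to almost sure statements via standard probabilistic results.

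For the first estimate, I would use the Bohr-lift identity $\|Q\|_{\h^p} = \|\mathscr{B}Q\|_{L^p(\TT,m_\infty)}$ together with Fubini. For each fixed $z\in\TT$, the quantity $\mathscr{B}(\R P)(z) = \sum_{n=1}^{N} a_n X_n z^{\nu(n)}$ is a standard random sum whose coefficients $a_n z^{\nu(n)}$ have modulus $|a_n|$, so the scalar Khintchine--Kahane inequality (interchangeable across Bernoulli, Steinhaus and Gaussian by the Marcinkiewicz--Zygmund--Kahane theorem) gives
\[
	E\bigl|\mathscr{B}(\R P)(z)\bigr|^p \asymp_p \Bigl(\sum_{n=1}^{N}|a_n|^2\Bigr)^{p/2} = \|P\|_{\h^2}^p,
\]
uniformly in $z$. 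Integrating over $\TT$ yields the Hardy moment estimate. Since $(\R P)_\sigma = \R(P_\sigma)$, applying that estimate for each $\sigma>0$ and integrating against $d\mu_\alpha$ gives the Bergman moment estimate.

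For the sufficiency direction of (i), suppose $f=\sum a_n n^{-s}\in\h^2$ and let $P_N$ denote its partial sums. Applying the moment estimate to the differences $\R P_N - \R P_M = \R(P_N - P_M)$ shows that $\{\R P_N\}$ is Cauchy in $L^p(\Omega;\h^p)$; since these are sums of independent symmetric random vectors in the $r$-Banach space $\h^p$ with $r=\min\{p,1\}$, L\'evy's theorem for symmetric sums promotes convergence in probability to almost sure convergence, yielding $\R f\in\h^p$ a.s. For the necessity, if $\R f\in\h^p$ a.s.\ then the a.s.\ convergence of $\R P_N$ in $\h^p$ makes $\sup_N\|\R P_N\|_{\h^p}$ almost surely finite; the Kahane integrability theorem in $r$-Banach spaces, or Fernique's theorem in the Gaussian case, upgrades this to $\sup_N E\|\R P_N\|_{\h^p}^p<\infty$, whereupon the lower bound in the moment estimate forces $\sum_{n=1}^\infty|a_n|^2<\infty$, i.e., $f\in\h^2$. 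Part (ii) is identical after replacing $\h^p$ by $\A^p_\alpha$ and $\h^2$ by $\h^{2,p}_\alpha$ throughout.

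The main obstacle is confirming that the probabilistic toolbox---Khintchine--Kahane, L\'evy/Itô--Nisio, and Kahane/Fernique integrability---remains available uniformly across the three laws for $\{X_n\}$ and throughout the quasi-Banach range $0<p<1$. The Marcinkiewicz--Zygmund--Kahane theorem supplies the interchangeability between Bernoulli, Steinhaus and Gaussian sequences, while the $r$-Banach versions of the integrability and convergence theorems justify the passage from moment bounds to almost sure statements when $p<1$.
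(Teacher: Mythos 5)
Your moment computation is exactly the one the paper uses (Bohr lift, Fubini, and scalar Khintchine--Kahane, then integration against $d\mu_{\alpha}$ for the Bergman case), and your sufficiency argument---Cauchy in $L^p(\Omega;\h^p)$ plus a L\'evy-type theorem for symmetric sums, valid in the $r$-Banach range---is sound. The genuine gap is in the necessity direction, at the phrase ``the a.s.\ convergence of $\R P_N$ in $\h^p$'': nothing in your plan justifies that $\R f\in\h^p$ almost surely implies that the partial sums $\R P_N=S_N(\R f)$ converge, or are even norm-bounded, almost surely. For $p>1$ this does follow from the uniform boundedness of the operators $S_N$ on $\h^p$ (Schauder basis property), but for $0<p\leq1$ it fails deterministically: as the paper recalls from \cite{BBSS}, $\|S_N\|_{\h^p\to\h^p}\to\infty$ as $N\to\infty$, so membership of a Dirichlet series in $\h^p$ gives no control on its partial sums. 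Without this step, Kahane/Fernique integrability has no almost surely finite supremum to act on, and the lower moment bound is never brought into play; the same problem occurs for $\A^p_{\alpha}$ with $p\leq1$.

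The paper closes precisely this gap with Lemma \ref{random-partial}: using the Section~2 machinery (Lemma \ref{con-sigma}) one shows that the horizontal translations $(\R f)_{\sigma_m}$, $\sigma_m\downarrow0$, lie in the space and converge to $\R f$ in norm almost surely, and then the Marcinkiewicz--Zygmund--Kahane theorem (in its quasi-Banach form) upgrades the a.s.\ convergence of these multiplier-smoothed series (coefficients $a_nn^{-\sigma_m}X_n$ with $n^{-\sigma_m}\to1$) to a.s.\ convergence of the partial sums themselves. Note that in your plan the MZK theorem is cited only for interchanging Bernoulli, Steinhaus and Gaussian laws, which is not its role here (that interchange is a contraction-principle matter); its essential use is exactly the partial-sum convergence you took for granted. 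A secondary point: for part (ii), ``identical after replacing'' conceals that you must identify membership in the abstractly defined completion $\h^{2,p}_{\alpha}$ with finiteness of $\int_0^{+\infty}\bigl(\sum_{n}|a_n|^2n^{-2\sigma}\bigr)^{p/2}d\mu_{\alpha}(\sigma)$, both to run the Cauchy argument and to pass from $\sup_N\|P_N\|_{\h^{2,p}_{\alpha}}<\infty$ to $f\in\h^{2,p}_{\alpha}$; this is the content of the paper's Theorems \ref{sigma} and \ref{suff} (see \eqref{222}) and is not automatic from the definition by completion.
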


Let $\mathscr{X}\subset\D$ be a (quasi-)Banach space of Dirichlet series containing all Dirichlet polynomials. Then by the Hewitt--Savage zero-one law (see \cite[Theorem 2.5.4]{Du19}), for any $f\in\D$,
$$\mathbb{P}(\R f\in\mathscr{X})\in\{0,1\}.$$
This prompts  us to define the symbol space $\mathscr{X}_{\star}$ by
$$\mathscr{X}_{\star}:=\left\{f\in\D:\mathbb{P}(\R f\in\mathscr{X})=1\right\}.$$
Then Theorem \ref{symbolA} can be rewritten as  $(\A^p_{\alpha})_{\star}=\h^{2,p}_{\alpha}$ and  $(\h^p)_{\star}=\h^2$,  which not only extends \cite[Proposition 4]{CDS} for all $0<p<\infty$ and standard random sequences, but also establishes the Bergman space analogy.  In fact, we completely solve the corresponding problem for mixed norm spaces of Dirichlet series; see  Corollary \ref{sym} for the characterization of $(\h^{p,q}_{\alpha})_{\star}$  for $0<p<\infty$, $0<q\leq\infty$ and $\alpha>-1,$ which reduces to Theorem \ref{symbolA} by choosing $q=\infty$ and $p=q$  respectively.

Based on Theorem \ref{symbolA} (i), the random embedding $\R:\h^p\hookrightarrow\h^q$ is equivalent to the inclusion $\h^p\subset(\h^q)_{\star}=\h^2$. Hence Theorem \ref{remHardy} holds automatically if we establish Theorem \ref{symbolA} (i). Similarly, Theorem \ref{symbolA} (ii) implies that the random embedding $\R:\A^p_{\alpha}\hookrightarrow \A^q_{\beta}$ is equivalent to the inclusion $\A^p_{\alpha}\subset(\A^q_{\beta})_{\star}=\h^{2,q}_{\beta}$. Hence to establish Theorem \ref{r-embedding}, we need to characterize the indices $p,q,\alpha,\beta$ such that the inclusion $\A^p_{\alpha}\subset\h^{2,q}_{\beta}$ holds. Compared with \cite{CFL22}, the main challenge here is to circumvent the difficulty caused by coefficient multipliers which  work effectively in the unit disk setting but are less known in the Dirichlet series setting. We apply different methods to completely characterize the inclusion between mixed norm spaces of Dirichlet series with six parameters, which is of independent interest.

\begin{theorem}\label{em}
Let $0<p,q,u,v<\infty$ and $\alpha,\beta>-1$. Then $\h^{p,q}_{\alpha}\subset\h^{u,v}_{\beta}$ if and only if
\begin{enumerate}
	\item [(i)] $p\geq u$ and $\frac{\alpha+1}{q}<\frac{\beta+1}{v}$; or
	\item [(ii)] $p\geq u$, $\frac{\alpha+1}{q}=\frac{\beta+1}{v}$ and $q\leq v$.
\end{enumerate}
\end{theorem}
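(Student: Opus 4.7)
My plan is to prove Theorem \ref{em} by isolating the two axes at play: the ``horizontal'' $\h^p$ scale ($p\to u$) and the ``vertical'' weighted integration in $\sigma$ ($(\alpha,q)\to(\beta,v)$).

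\emph{Sufficiency.} Since $\h^p$ is contractively embedded in $\h^u$ for $p\ge u$, one has $\|P_\sigma\|_{\h^u}\le\|P_\sigma\|_{\h^p}$ pointwise in $\sigma$, so it suffices to prove $\|P\|_{\h^{p,v}_\beta}\le C\|P\|_{\h^{p,q}_\alpha}$ for every Dirichlet polynomial $P$. The key input is the monotonicity of $\phi(\sigma):=\|P_\sigma\|_{\h^p}$: via the Bohr correspondence $P_\sigma\leftrightarrow\mathscr{B}P(p^{-\sigma}z)$ with $p^{-\sigma}=(p_1^{-\sigma},p_2^{-\sigma},\dots)$ and the subharmonicity of $|\mathscr{B}P|^p$ in each coordinate, $\phi$ is nonincreasing on $(0,\infty)$. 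Everything then reduces to the scalar inequality
\begin{equation*}
\left(\int_0^\infty\phi(\sigma)^v\,d\mu_\beta(\sigma)\right)^{1/v}\le C\left(\int_0^\infty\phi(\sigma)^q\,d\mu_\alpha(\sigma)\right)^{1/q}
\end{equation*}
for nonincreasing $\phi\ge 0$. In case (i), monotonicity of $\phi$ combined with $\mu_\alpha([0,\sigma])\asymp\sigma^{\alpha+1}$ near zero yields the pointwise envelope $\phi(\sigma)\lesssim\sigma^{-(\alpha+1)/q}\|\phi\|_{L^q(d\mu_\alpha)}$; plugging this into the left side and using $e^{-2\sigma}$ to control the tail, integrability at zero is equivalent to $\frac{\beta+1}{v}>\frac{\alpha+1}{q}$. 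In case (ii), with equality, I would apply a classical weighted Hardy-type inequality for nonincreasing functions, which can be obtained from a dyadic decomposition of $(0,\infty)$ adapted to $d\mu_\alpha$.

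\emph{Necessity.} The two requirements $p\ge u$ and the index inequality are probed separately. For the index condition, take $P_N(s)=M^{-Ns}$ for a fixed $M\ge 2$; then $\|P_N\|_{\h^r}=1$ and $\|(P_N)_\sigma\|_{\h^r}=M^{-N\sigma}$ for every $r$, and a direct Gamma-integral computation yields $\|P_N\|_{\h^{r,s}_\gamma}\asymp N^{-(\gamma+1)/s}$ as $N\to\infty$; the inclusion then forces $\frac{\beta+1}{v}\ge\frac{\alpha+1}{q}$. For the necessity of $p\ge u$ and of $q\le v$ in the boundary case, restrict to one-variable Dirichlet series $f(s)=\sum_k a_k\,2^{-ks}$: the Bohr lift $\mathscr{B}f$ depends only on $z_1$, and the substitution $r=2^{-\sigma}$ shows that the $\h^{p,q}_\alpha$-norm of $f$ is comparable to a standard weighted mixed-norm disk-space norm of $\mathscr{B}f$. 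The embedding $\h^{p,q}_\alpha\subset\h^{u,v}_\beta$ then descends to its disk analog, whose necessary conditions $p\ge u$ and (at equality) $q\le v$ are classical.

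\emph{Main obstacle.} The most delicate step is the sufficiency in case (ii): the pointwise envelope from case (i) just fails at the critical exponent, so the weighted Hardy-type inequality for nonincreasing functions against $d\mu_\alpha$ with matched scaling $\frac{\alpha+1}{q}=\frac{\beta+1}{v}$ must be established carefully. A secondary difficulty is verifying that the disk-case necessities transfer faithfully through $r=2^{-\sigma}$: the weight $d\mu_\alpha$ near $\sigma=0$ corresponds to a weight comparable (but not equal) to $(1-r)^\alpha\,dr$ near $r=1$, which is enough since each necessary condition is governed by the boundary behavior.
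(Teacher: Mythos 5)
Your sufficiency argument is sound and runs essentially parallel to the paper's: your pointwise envelope $\phi(\sigma)\lesssim\sigma^{-(\alpha+1)/q}\|\phi\|_{L^q(d\mu_\alpha)}$ is exactly the paper's Lemma \ref{mean-growth}, and your dyadic reduction of the critical case $\frac{\alpha+1}{q}=\frac{\beta+1}{v}$, $q\leq v$ to the inclusion $\ell^q\subset\ell^v$ does work (the paper instead splits $\|f_\sigma\|^v=\|f_\sigma\|^{v-q}\|f_\sigma\|^q$ and uses a dilation trick, but both are correct). Your lacunary/one-variable treatment of the index conditions in necessity is also fine in substance, modulo one small point: deducing $N^{-(\beta+1)/v}\lesssim N^{-(\alpha+1)/q}$ from the set inclusion requires the inclusion map to be bounded, so you need a closed-graph argument for these quasi-Banach spaces (the paper sidesteps this entirely by exhibiting explicit functions in $\h^{p,q}_{\alpha}\setminus\h^{u,v}_{\beta}$).

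However, there is a genuine gap in your necessity proof of $p\geq u$, and it is the crux of the theorem. Restricting to one-variable Dirichlet series $f(s)=\sum_k a_k 2^{-ks}$ cannot detect the failure of the inclusion when $p<u$, because the disk analogue you invoke is false: in the disk, mixed norm inclusions with a \emph{larger} first exponent do hold provided the weight compensates. By the Hardy--Littlewood estimate $M_u(r,g)\lesssim(1-r)^{1/u-1/p}\|g\|_{H^p}$ (and its Flett-type mixed-norm refinements), one has $H(p,q,\alpha)\subset H(u,v,\beta)$ for $p<u$ whenever $\frac{\beta+1}{v}$ exceeds $\frac{\alpha+1}{q}$ by enough to absorb $\frac{1}{p}-\frac{1}{u}$; so in that regime no one-variable counterexample exists. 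This is confirmed by the paper's own Lemma \ref{lacunary}: for series supported on the powers $2^{-2^ns}$ of a single prime, membership in $\h^{p,q}_{\alpha}$ does not depend on $p$ at all. The necessity of $p\geq u$ is an intrinsically infinite-dimensional phenomenon, and the paper proves it with test functions living on arbitrarily many primes: $f_3(s)=\prod_{j=1}^k(1-p_j^{-s})^{-\eta}$ with $\frac{1}{u}<\eta<\frac{1}{p}$, where the Forelli--Rudin estimate gives $\|(f_3)_\sigma\|^u_{\h^u}\gtrsim(1-p_k^{-2\sigma})^{k(1-u\eta)}$, so each additional prime factor worsens the blow-up as $\sigma\to0^+$ by a fixed power of $\sigma$; choosing $k>\frac{u(\beta+1)}{(u\eta-1)v}$ defeats any fixed weight $\mu_\beta$. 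Without some such multi-prime construction, the case $p<u$ of the necessity simply cannot be closed, and your proof of the theorem is incomplete.
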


Finally, we give another application of the above theorem. Given a function $\varphi$ on the complex plane $\C$, the nonlinear superposition operator $S_{\varphi}$ is defined for complex-valued functions $f$ by
$$S_{\varphi}f=\varphi\circ f.$$
Given two function spaces $\mathscr{X}$ and $\mathscr{Y}$, a natural question is to find the functions $\varphi$ such that the operators $S_{\varphi}$ map $\mathscr{X}$ into $\mathscr{Y}$. The theory of superposition operators has a long history in the context of real-valued functions; see \cite{AZ,B,BK,BLS0,BLS,MM}. Recently, the superposition operators on spaces of analytic functions have drawn many attentions; see \cite{Ca95,CaG94,DoGi,Me22,SW} and the references therein. However, there have been just a few studies in the setting of Dirichlet series. Bayart et al. \cite[Theorem 9]{BCGMS} proved that a function $\varphi$ induces a superposition operator $S_{\varphi}$ mapping $\h^p$ into $\h^q$ if and only if $\varphi$ is a polynomial of degree at most $p/q$. Fern\'{a}ndez Vidal et al. \cite{FGMS} investigated the superposition operators on some Fr\'{e}chet spaces of Dirichlet series. Different from the  methods in \cite{BCGMS,CaG94,FGMS}, we apply Theorem \ref{em} fortunately to characterize the superposition operators between Bergman spaces of Dirichlet series below. In fact, we completely characterize the superposition operators $S_{\varphi}$ between  the mixed norm spaces $\h^{p,q}_{\alpha}$ and $\h^{u,v}_{\beta}$ for all $0<p,q,u,v<\infty$ and $\alpha,\beta>-1$; see Theorem \ref{supe}.

\begin{theorem}\label{sBergman}
Let $0<p,q<\infty$, $\alpha,\beta>-1$, and let $\varphi$ be a function on $\C$. Then the superposition operator $S_{\varphi}$ maps $\A^p_{\alpha}$ into $\A^q_{\beta}$ if and only if $\varphi$ is a polynomial of degree $N$, where $N$ satisfies
\begin{enumerate}
	\item [(i)] $N\leq\frac{p}{q}$ in the case $\alpha\leq\beta$;
	\item [(ii)] $N<\frac{p(\beta+1)}{q(\alpha+1)}$ in the case $\alpha>\beta$.
\end{enumerate}
Moreover, if $S_{\varphi}$ maps $\A^p_{\alpha}$ into $\A^q_{\beta}$, then it is bounded and continuous.
\end{theorem}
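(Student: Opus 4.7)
The plan is to transport the superposition problem into an embedding problem for Theorem \ref{em}: the crux is the identity
$$\|f^k\|_{\A^q_\beta}^q = \int_0^\infty \|(\mathscr{B}f_\sigma)^k\|_{L^q(\TT)}^q\, d\mu_\beta(\sigma) = \|f\|_{\A^{kq}_\beta}^{kq},$$
which follows from the multiplicativity of the Bohr lift on the infinite torus $\TT$ (first for Dirichlet polynomials $f$, and then for general Dirichlet series by density).

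For sufficiency, if $\varphi(z) = \sum_{k=0}^N c_k z^k$ is a polynomial of degree $N$, then $\varphi \circ f$ lies in $\A^q_\beta$ whenever $\A^p_\alpha \subset \A^{Nq}_\beta$, i.e., $\h^{p,p}_\alpha \subset \h^{Nq,Nq}_\beta$. A short case analysis shows that conditions (i) and (ii) of the statement correspond exactly to cases (i) and (ii) of Theorem \ref{em} applied with $u=v=Nq$: when $\alpha > \beta$ the strict inequality $N < \frac{p(\beta+1)}{q(\alpha+1)}$ rearranges to $\frac{\alpha+1}{p} < \frac{\beta+1}{Nq}$ and automatically forces $Nq < p$; when $\alpha \leq \beta$ the bound $N \leq p/q$ matches in the obvious way, with equality falling under case (ii) of Theorem \ref{em} only when $\alpha = \beta$ and $N = p/q$.

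For necessity, I would first show that $\varphi$ is entire by testing against the Dirichlet polynomials $f(s) = a + b\cdot 2^{-s}$, $a, b \in \C$, each of which lies in $\A^p_\alpha$ and, as $s$ ranges over $\C_{1/2}$, parametrizes the punctured disk $\{z : 0 < |z - a| < |b|/\sqrt 2\}$ holomorphically. Since $\varphi\circ f$ must be analytic on $\C_{1/2}$, $\varphi$ is analytic on every such disk, and letting $(a,b)$ vary covers $\C$ (holomorphy at each center $a$ is recovered by re-centering). Next, to show $\varphi$ is a polynomial, I would combine a Baire category argument on the level sets $\{f \in \A^p_\alpha : \|\varphi\circ f\|_{\A^q_\beta} \leq n\}$ (producing local boundedness of $S_\varphi$) with the point-evaluation estimate $|g(s)| \lesssim_s \|g\|_{\A^q_\beta}$ on $\A^q_\beta$ and with Dirichlet polynomials in $\A^p_\alpha$ realizing large values at single points with controlled norm, forcing the maximum modulus $M(r) = \max_{|z|=r}|\varphi(z)|$ to grow polynomially in $r$. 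Once $\varphi$ is polynomial of degree $N$, a Vandermonde-type extraction from $\{\varphi(t_j f)\}_{j=0}^N$ with $N+1$ distinct nonzero $t_j$ shows that each $f^k$ with $c_k \neq 0$ belongs to $\A^q_\beta$, hence $\A^p_\alpha \subset \A^{kq}_\beta$, so that Theorem \ref{em} caps $k$ exactly by the threshold stated. The boundedness and continuity of $S_\varphi$ in the last sentence of the statement then follow from the polynomial representation and the norm identity above.

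The main obstacle will be the preliminary continuity and growth estimate on the nonlinear operator $S_\varphi$ in the necessity argument: the standard closed graph theorem is a linear statement and does not apply directly, so one must combine Baire-type arguments with point-evaluation estimates on $\A^q_\beta$ and with extremal test-function constructions in $\A^p_\alpha$ to bootstrap from the pointwise growth of $\varphi$ to a polynomial majorant and then to finite degree.
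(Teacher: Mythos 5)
Your overall architecture (reduce to monomials, then convert $S_{z^N}:\A^p_\alpha\to\A^q_\beta$ into the inclusion $\A^p_\alpha\subset\A^{Nq}_\beta$ and invoke Theorem \ref{em}; the index bookkeeping in your case analysis is correct) coincides with the paper's, and your Vandermonde extraction is a legitimate, arguably cleaner, substitute for the paper's comparison principle (Lemma \ref{ppsi}). But there is a genuine gap at the technical heart of the argument: your ``key identity'' $\|f^k\|_{\A^q_\beta}^{q}=\|f\|_{\A^{kq}_\beta}^{kq}$ is asserted for general Dirichlet series ``by density,'' and density cannot deliver the direction you actually need for necessity. After the Vandermonde step you know only that $f\in\A^p_\alpha$ and $f^N\in\A^q_\beta$, and you must conclude $f\in\A^{Nq}_\beta$; you cannot approximate $f$ by polynomials in the $\A^{Nq}_\beta$ norm, because membership of $f$ in that space is precisely what is to be proved (nor do you know that $P_j\to f$ in $\A^p_\alpha$ forces $P_j^N\to f^N$ in $\A^q_\beta$, since no boundedness or continuity of $S_{z^N}$ is available at this stage). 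This implication $f^N\in\h^{p/N}\Rightarrow f\in\h^{p}$ is exactly the nontrivial content of the paper's Proposition \ref{NN} and Corollary \ref{N-power}, proved not by density but through Bohr's Abschnitte: the characterization $\|f\|_{\h^p}=\sup_d\|A_d f\|_{\h^p}$ of Lemma \ref{d-ab}, analyticity of $\mathscr{B}(A_d f)$ on the finite polydisk $\DD^d$, and the pointwise identity $|\mathscr{B}(A_df)^N|^{p/N}=|\mathscr{B}(A_df)|^{p}$ there. Without this (or an equivalent argument), your necessity proof does not close.

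The second soft spot is your polynomial-growth step. The Baire category argument needs the level sets $\{f\in\A^p_\alpha:\|\varphi\circ f\|_{\A^q_\beta}\le n\}$ to be closed, which requires a Fatou-type property of $\A^q_\beta$ (pointwise limits of norm-bounded sequences remain in the space with controlled norm); this is not established in your sketch and is not obvious for these spaces, whose norms are defined by completion. You also leave unspecified the extremal functions ``realizing large values at single points with controlled norm,'' i.e.\ the lower bounds for point evaluations on $\A^p_\alpha$. The paper bypasses both issues: in Lemma \ref{poly} it tests $S_\varphi$ against the single explicit function $g(s)=(1-2^{-s})^{-\gamma}$ (a Dirichlet series supported on one prime), uses the growth estimate of Lemma \ref{grow} for such series together with a Stolz-domain argument, and derives the polynomial bound on $\varphi$ directly from $\varphi\circ g\in\A^q_\beta$, with no category argument and no a priori boundedness of $S_\varphi$. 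I would advise replacing your Baire scheme with this kind of direct test-function argument, and replacing the density claim with a proof along the lines of Proposition \ref{NN}.
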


 The paper is organized as follows. In Section \ref{sec2}, we establish some fundamental function-theoretic  properties for the mixed norm spaces $\h^{p,q}_{\alpha}$. In Section \ref{sec3}, we establish  the  equivalent descriptions for a random Dirichlet series to be in the mixed norm spaces  $\h^{p,q}_{\alpha}$ almost surely, which covers Theorem \ref{symbolA}. Section \ref{sec4} is devoted to proving Theorem \ref{em}, which leads to a proof of Theorem \ref{r-embedding}. Finally, we  investigate  the superposition operators between Bergman spaces of Dirichlet series in Section \ref{super}.

Throughout the paper, we use $A\lesssim B$ or $B\gtrsim A$ to denote that $A\leq CB$ for some inessential constant $C>0$. If $A\lesssim B\lesssim A$, then we write $A\asymp B$. For $\sigma\geq0$ and $f\in\D$, write $f_{\sigma}(s)=T_{\sigma}f(s):=f(s+\sigma)$. We assume that all random variables are defined on a probability space $(\Omega,\Sigma,\mathbb{P})$ with expectation denoted by $\mathbb{E}$.

\section{Mixed norm spaces of Dirichlet series}\label{sec2}

In this section, we establish some fundamental properties for the mixed norm spaces $\h^{p,q}_{\alpha}$. The following lemma  concerns the point evaluations on $\h^{p,q}_{\alpha}$.

\begin{lemma}\label{growth}
Let $0<p,q<\infty$ and $\alpha>-1$. Then the point evaluation at any $s\in\C_{1/2}$ extends to a bounded functional on $\h^{p,q}_{\alpha}$. Moreover, for $f\in\h^{p,q}_{\alpha}$,
\begin{equation}\label{gr}
|f(s)|\lesssim\left(\frac{\Re s}{2\Re s-1}\right)^{\frac{1}{p}+\frac{\alpha+1}{q}}\|f\|_{\h^{p,q}_{\alpha}},\quad s\in\C_{1/2}.
\end{equation}
\end{lemma}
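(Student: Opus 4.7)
The plan is to bootstrap from the standard pointwise estimate on $\h^p$, namely
\[
|g(w)|\lesssim\left(\frac{\Re w}{2\Re w-1}\right)^{1/p}\|g\|_{\h^p},\qquad g\in\h^p,\ w\in\C_{1/2},
\]
which I take as a known input, applied to the shifted series $f_\eta(\cdot)=f(\cdot+\eta)$ at the point $s-\eta$, and then to average over a small window of $\eta$'s against the measure $\mu_\alpha$. By density of $\mathcal{P}$ in $\h^{p,q}_\alpha$ it suffices to prove \eqref{gr} for $f=P\in\mathcal{P}$, which sidesteps any convergence ambiguity in defining $f(s)$.

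Fix $s\in\C_{1/2}$ and set $r:=(\Re s-1/2)/2$. For $0\leq\eta\leq r$ one has $\Re(s-\eta)\geq(\Re s+1/2)/2>1/2$, so the $\h^p$ bound gives
\[
|f(s)|=|f_\eta(s-\eta)|\lesssim\left(\frac{\Re s-\eta}{2(\Re s-\eta)-1}\right)^{1/p}\|f_\eta\|_{\h^p}.
\]
The map $x\mapsto x/(2x-1)$ is strictly decreasing on $(1/2,\infty)$, so the prefactor is maximized at $\eta=r$, where it equals $(\Re s+1/2)/(2\Re s-1)\leq 2\,\Re s/(2\Re s-1)$. Taking $q$-th powers and integrating over $\eta\in[0,r]$ against $d\mu_\alpha$ yields
\[
|f(s)|^{q}\,\mu_\alpha([0,r])\lesssim\left(\frac{\Re s}{2\Re s-1}\right)^{q/p}\int_{0}^{r}\|f_\eta\|_{\h^p}^{q}\,d\mu_\alpha(\eta)\leq\left(\frac{\Re s}{2\Re s-1}\right)^{q/p}\|f\|_{\h^{p,q}_\alpha}^{q}.
\]

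The remaining task is a lower bound on $\mu_\alpha([0,r])$, for which I split into two regimes. If $r\geq 1$ (so $\Re s\geq 5/2$) then $\Re s/(2\Re s-1)\asymp 1$ and $\mu_\alpha([0,r])\geq\mu_\alpha([0,1])>0$, and \eqref{gr} reduces to the obvious $|f(s)|\lesssim\|f\|_{\h^{p,q}_\alpha}$. If instead $r<1$ (so $\Re s\in(1/2,5/2]$, whence $\Re s\asymp 1$), then
\[
\mu_\alpha([0,r])=\frac{2^{\alpha+1}}{\Gamma(\alpha+1)}\int_{0}^{r}\sigma^\alpha e^{-2\sigma}\,d\sigma\asymp r^{\alpha+1}\asymp(2\Re s-1)^{\alpha+1}\asymp\left(\frac{\Re s}{2\Re s-1}\right)^{-(\alpha+1)}.
\]
Substituting back gives $|f(s)|^{q}\lesssim(\Re s/(2\Re s-1))^{q/p+\alpha+1}\|f\|_{\h^{p,q}_\alpha}^{q}$, which is \eqref{gr} after a $q$-th root. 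The only genuine input is the pointwise $\h^p$ estimate; the rest is bookkeeping so that the single exponent $\frac{1}{p}+\frac{\alpha+1}{q}$ absorbs both the $\h^p$-boundary growth and the mass of $\mu_\alpha$ near $\sigma=0$. The main obstacle, if any, is making sure the two regimes match up uniformly at $r\asymp 1$; once the choice $r=(\Re s-1/2)/2$ is made this is essentially automatic.
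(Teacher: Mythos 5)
Your proof is correct and follows essentially the same route as the paper's: apply the $\h^p$ point-evaluation bound (which the paper derives from $|P_\sigma(s-\sigma)|^p\le\zeta(2\Re s-2\sigma)\|P_\sigma\|^p_{\h^p}$ and $\zeta(x)\le x/(x-1)$) to the shifted polynomial, integrate the $q$-th power over the window $[0,(\Re s-1/2)/2]$ against $\mu_\alpha$, and lower-bound the $\mu_\alpha$-mass of that window in the two regimes $\Re s\ge 5/2$ and $\Re s<5/2$. The only cosmetic difference is that you package the zeta estimate as a cited pointwise bound and track the monotonicity of $x\mapsto x/(2x-1)$ explicitly, while the paper bounds $\zeta(2\Re s-2\sigma)\le\zeta(\Re s+1/2)$ directly.
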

\begin{proof}
It is sufficient to establish \eqref{gr} for any Dirichlet polynomial $P$. Fix $s\in\C_{1/2}$. Then for any $0<\sigma<(\Re s-1/2)/2$, it follows from \cite[Theorem 3]{Ba} that
$$|P(s)|^p=|P_{\sigma}(s-\sigma)|^p\leq\zeta(2\Re s-2\sigma)\|P_{\sigma}\|^p_{\h^p},$$
which implies that
$$|P(s)|^q\int_0^{\frac{\Re s-1/2}{2}}d\mu_{\alpha}(\sigma)\leq
\int_0^{\frac{\Re s-1/2}{2}}\zeta(2\Re s-2\sigma)^{q/p}\|P_{\sigma}\|^q_{\h^p}d\mu_{\alpha}(\sigma).$$
Noting that $\zeta(x)\leq\frac{x}{x-1}$ for $x>1$, we obtain that for $0<\sigma<(\Re s-1/2)/2$,
$$\zeta(2\Re s-2\sigma)\leq\zeta(\Re s+1/2)\leq\frac{2\Re s+1}{2\Re s-1}.$$
Therefore,
\begin{equation}\label{inte}
|P(s)|^q\int_0^{\frac{\Re s-1/2}{2}}d\mu_{\alpha}(\sigma)\leq
\left(\frac{2\Re s+1}{2\Re s-1}\right)^{q/p}\|P\|^q_{\h^{p,q}_{\alpha}}.
\end{equation}
If $(\Re s-1/2)/2\geq1$, i.e. $\Re s\geq5/2$, then $\int_0^{\frac{\Re s-1/2}{2}}d\mu_{\alpha}(\sigma)\gtrsim1$. Consequently,
$$|P(s)|^q\lesssim\left(\frac{2\Re s+1}{2\Re s-1}\right)^{q/p}\|P\|^q_{\h^{p,q}_{\alpha}}
\lesssim\|P\|^q_{\h^{p,q}_{\alpha}}\lesssim\left(\frac{\Re s}{2\Re s-1}\right)^{\frac{q}{p}+\alpha+1}\|P\|^q_{\h^{p,q}_{\alpha}}.$$
If $(\Re s-1/2)/2<1$, i.e. $\Re s<5/2$, then
$$\int_0^{\frac{\Re s-1/2}{2}}d\mu_{\alpha}(\sigma)
=\frac{2^{\alpha+1}}{\Gamma(\alpha+1)}\int_0^{\frac{\Re s-1/2}{2}}\sigma^{\alpha}e^{-2\sigma}d\sigma
\asymp(2\Re s-1)^{\alpha+1},$$
which, together with \eqref{inte}, implies that
$$|P(s)|^q\lesssim\frac{(2\Re s+1)^{q/p}}{(2\Re s-1)^{q/p+\alpha+1}}\|P\|^q_{\h^{p,q}_{\alpha}}
\lesssim\left(\frac{\Re s}{2\Re s-1}\right)^{\frac{q}{p}+\alpha+1}\|P\|^q_{\h^{p,q}_{\alpha}}.$$
Hence \eqref{gr} holds for any Dirichlet polynomial and the proof is complete.
\end{proof}

We will need the following lemma, which was proved for $p\geq1$ in \cite[Proposition 2.3]{DP} and for $p<1$ in \cite[Proposition 2.5]{FGY}.

\begin{lemma}\label{decreasing}
Given $0<p<\infty$ and $f\in\h^p$, $f_{\sigma}$ belongs to $\h^p$ for any $\sigma>0$. Moreover, the function $\sigma\mapsto\|f_{\sigma}\|_{\h^p}$ is decreasing on $[0,+\infty)$.
\end{lemma}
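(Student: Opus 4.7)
The plan is to exploit the Bohr lift in order to reduce the statement to a plurisubharmonicity argument on polydisks that is valid uniformly for all $p\in(0,\infty)$.

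First, I would handle the case of a Dirichlet polynomial $P(s)=\sum_{n=1}^{N}a_{n}n^{-s}$, whose Bohr lift $F=\mathscr{B}P$ is a holomorphic polynomial in finitely many variables $z_{1},\ldots,z_{d}$ (with $d$ large enough that every $n\leq N$ factors through $p_{1},\ldots,p_{d}$). Since $n^{-\sigma}=p_{1}^{-\sigma\nu_{1}(n)}\cdots p_{d}^{-\sigma\nu_{d}(n)}$, a direct substitution gives
$$\mathscr{B}P_{\sigma}(z_{1},\ldots,z_{d})=F\bigl(p_{1}^{-\sigma}z_{1},\ldots,p_{d}^{-\sigma}z_{d}\bigr),$$
so that
$$\|P_{\sigma}\|_{\h^{p}}^{p}=\int_{\T^{d}}\bigl|F(p_{1}^{-\sigma}w_{1},\ldots,p_{d}^{-\sigma}w_{d})\bigr|^{p}\,dm(w_{1})\cdots dm(w_{d}).$$

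Second, I would invoke plurisubharmonicity. Because $F$ is holomorphic, $\log|F|$ is plurisubharmonic, and hence $|F|^{p}=\exp(p\log|F|)$ is plurisubharmonic on $\C^{d}$ for every $p>0$. Applying the sub-mean value inequality one variable at a time shows that
$$(r_{1},\ldots,r_{d})\mapsto\int_{\T^{d}}|F(r_{1}w_{1},\ldots,r_{d}w_{d})|^{p}\,dm(w_{1})\cdots dm(w_{d})$$
is monotone increasing in each coordinate $r_{j}\in[0,\infty)$. Since $p_{j}^{-\sigma}$ is strictly decreasing in $\sigma\geq0$, the polynomial case of the lemma follows; in particular $T_{\sigma}:\mathcal{P}\to\mathcal{P}$ is a quasi-contraction in the $\h^{p}$ quasi-norm.

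Third, I would extend to arbitrary $f\in\h^{p}$ by density. Choose Dirichlet polynomials $P_{k}\to f$ in $\h^{p}$; the quasi-contraction property implies that $(P_{k})_{\sigma}$ is Cauchy in $\h^{p}$, hence converges to some $g\in\h^{p}$, and monotonicity is inherited from the polynomial case by passing to the limit. To identify $g$ with $f_{\sigma}$, I would use Lemma \ref{growth} (which, although stated for $\h^{p,q}_{\alpha}$, reduces to the standard pointwise bound on $\h^{p}$ with $q=\infty$) to get uniform convergence of $P_{k}$ on compact subsets of $\C_{1/2}$, which yields $P_{k}(s+\sigma)\to f(s+\sigma)$ pointwise and forces $g(s)=f(s+\sigma)$ on $\C_{1/2}$.

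The main subtlety I anticipate is keeping the argument clean in the quasi-Banach regime $p<1$; however, the plurisubharmonicity step is insensitive to this, because it is a statement about non-negative integrals that bypasses any convexity-based inequality. Once the polynomial monotonicity is in hand, only the quasi-triangle inequality is needed to close the density/limit argument, so the same proof covers all $p\in(0,\infty)$ uniformly and matches both references \cite{DP,FGY}.
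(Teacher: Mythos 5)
Your proof is correct, but note that the paper itself gives no proof of this lemma at all: it simply cites \cite[Proposition 2.3]{DP} for $p\geq 1$ and \cite[Proposition 2.5]{FGY} for $p<1$. Your argument is therefore a genuinely different, self-contained route, and its main virtue is uniformity: the Bohr-lift identity $\mathscr{B}(P_{\sigma})(z)=\mathscr{B}P(p_1^{-\sigma}z_1,\dots,p_d^{-\sigma}z_d)$ together with plurisubharmonicity of $|F|^p$ (so that multi-radial integral means increase coordinatewise) yields the contraction $\|P_{\sigma}\|_{\h^p}\leq\|P_{\sigma'}\|_{\h^p}$ for $\sigma\geq\sigma'\geq 0$ in one stroke for every $p\in(0,\infty)$, whereas the cited references treat the Banach and quasi-Banach ranges separately. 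Two small points of polish. First, Lemma \ref{growth} is stated only for $q<\infty$, so for the pointwise identification step you should appeal directly to Bayart's estimate $|P(s)|^p\leq\zeta(2\Re s)\|P\|^p_{\h^p}$ on $\C_{1/2}$ (\cite[Theorem 3]{Ba}), which is exactly what the paper itself invokes; this gives locally uniform convergence $P_k\to f$ and $(P_k)_{\sigma}\to f_{\sigma}$ on $\C_{1/2}$, and uniqueness of Dirichlet coefficients then identifies the $\h^p$-limit $g$ with $f_{\sigma}$. Second, your closing remark that ``only the quasi-triangle inequality is needed'' to pass the inequality $\|(P_k)_{\sigma_2}\|_{\h^p}\leq\|(P_k)_{\sigma_1}\|_{\h^p}$ to the limit is slightly imprecise: a general quasi-norm need not be continuous, so you should instead use the $p$-triangle inequality $\|f+g\|^{r}_{\h^p}\leq\|f\|^{r}_{\h^p}+\|g\|^{r}_{\h^p}$ with $r=\min\{p,1\}$, which holds here because the $\h^p$ quasi-norm is an $L^r$-subadditive $L^p$ quasi-norm on the infinite torus; this makes the limit passage immediate. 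Neither point is a gap, and the proof as structured is sound.
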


The next theorem indicates that every element in the mixed norm space $\h^{p,q}_{\alpha}$ is a Dirichlet series.

\begin{theorem}\label{sigma}
Let $0<p,q<\infty$ and $\alpha>-1$. If $f\in\h^{p,q}_{\alpha}$, then $f_{\sigma}\in\h^p$ for every $\sigma>0$, and
\begin{equation}\label{norm}
\|f\|_{\h^{p,q}_{\alpha}}=\left(\int_0^{+\infty}\|f_{\sigma}\|^q_{\h^p}d\mu_{\alpha}(\sigma)\right)^{1/q}.
\end{equation}
In particular, every element in $\h^{p,q}_{\alpha}$ is a Dirichlet series that converges uniformly on $\C_{\frac{1}{2}+\epsilon}$ \  for any $\epsilon>0$.
\end{theorem}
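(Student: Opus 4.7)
The plan is to bootstrap from the definition as the completion of $\mathcal{P}$: an element $f\in\h^{p,q}_{\alpha}$ is represented by a Cauchy sequence $\{P_n\}\subset\mathcal{P}$, and the desired identity \eqref{norm} holds tautologically for each $P_n$. The task is to pass to the limit in a consistent way, both for the translates $f_\sigma$ and for the integral on the right. Throughout, I will use Lemma \ref{growth} to pin down a concrete holomorphic realization of $f$ on $\C_{1/2}$ (so that $P_n(s)\to f(s)$ for every $s\in\C_{1/2}$), which allows me to identify any candidate limit with $f(\cdot+\sigma)$.

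First I would show that for every fixed $\sigma_0>0$, the sequence $\{(P_n)_{\sigma_0}\}$ is Cauchy in $\h^p$. The key tool is Lemma \ref{decreasing}: the function $\sigma\mapsto\|(P_n-P_m)_\sigma\|_{\h^p}$ is decreasing on $[0,+\infty)$, so
\[
\|(P_n-P_m)_{\sigma_0}\|_{\h^p}^q\,\mu_\alpha([0,\sigma_0])\le\int_0^{\sigma_0}\|(P_n-P_m)_\sigma\|_{\h^p}^q\,d\mu_\alpha(\sigma)\le\|P_n-P_m\|_{\h^{p,q}_{\alpha}}^q,
\]
and $\mu_\alpha([0,\sigma_0])>0$. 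Let $g_{\sigma_0}\in\h^p$ denote the $\h^p$-limit. Since $\h^p$-functions have bounded point evaluations on $\C_{1/2}$, we have $P_n(s+\sigma_0)\to g_{\sigma_0}(s)$ pointwise on $\C_{1/2}$, and by Lemma \ref{growth} the same sequence converges to $f(s+\sigma_0)$. Hence $g_{\sigma_0}=f_{\sigma_0}\in\h^p$.

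Next I would verify \eqref{norm}. For each $\sigma>0$, $\|(P_n)_\sigma\|_{\h^p}\to\|f_\sigma\|_{\h^p}$, so Fatou gives the inequality $\le$. For the reverse inequality, fix $n$ and let $m\to\infty$; since $(P_m)_\sigma\to f_\sigma$ in $\h^p$ for every $\sigma$, Fatou applied to $\|(P_n)_\sigma-(P_m)_\sigma\|_{\h^p}^q$ yields
\[
\int_0^{+\infty}\|(P_n)_\sigma-f_\sigma\|_{\h^p}^q\,d\mu_\alpha(\sigma)\le\liminf_{m\to\infty}\|P_n-P_m\|_{\h^{p,q}_{\alpha}}^q,
\]
which is arbitrarily small for large $n$. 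Combining this with the (quasi-)triangle inequality valid in the $r$-Banach setting ($r=\min\{1,p,q\}$), the quantity $\bigl(\int_0^{+\infty}\|f_\sigma\|_{\h^p}^q\,d\mu_\alpha(\sigma)\bigr)^{1/q}$ equals the limit of $\|P_n\|_{\h^{p,q}_\alpha}=\|f\|_{\h^{p,q}_\alpha}$.

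Finally, the uniform convergence assertion follows immediately: for any $\epsilon>0$ pick $\sigma=\epsilon/2$; by what was just proved, $f_\sigma\in\h^p$, and since every element of $\h^p$ converges uniformly on $\C_{1/2+\epsilon/2}$, the Dirichlet series for $f$ itself converges uniformly on $\C_{1/2+\epsilon}$. The main delicacy I anticipate is the quasi-norm bookkeeping in the case $p<1$ or $q<1$; this is handled cleanly by working with $r$-th powers and the inequality $\|a+b\|^r\le\|a\|^r+\|b\|^r$ in place of Minkowski, rather than introducing any new ideas.
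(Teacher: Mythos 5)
Your proposal is correct and follows essentially the same route as the paper's proof: the same use of Lemma~\ref{decreasing} to show that $\{(P_n)_{\sigma}\}$ is Cauchy in $\h^p$ on each translate, the same identification $f_{\sigma}=g$ via the bounded point evaluations of $\h^p$ and Lemma~\ref{growth}, and the same deduction of uniform convergence on $\C_{\frac{1}{2}+\epsilon}$. The only divergence is that where the paper defers to the proof of Theorem~3.5 in \cite{FGY} for the limit $\int_0^{+\infty}\|f_{\sigma}-(P_n)_{\sigma}\|^q_{\h^p}\,d\mu_{\alpha}(\sigma)\to0$ and then applies the triangle inequality, you obtain that same limit self-containedly via Fatou's lemma together with the $r$-triangle inequality, which fills in the details the paper omits.
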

\begin{proof}
Let $\{P_j\}_{j\geq1}$ be a sequence of Dirichlet polynomials that converges to $f$ in $\h^{p,q}_{\alpha}$. Fix $\sigma>0$. Then for any $m,j\geq1$, Lemma \ref{decreasing} yields that
$$\|P_m-P_j\|^q_{\h^{p,q}_{\alpha}}\geq\int_0^{\sigma}\|(P_m-P_j)_{\delta}\|^q_{\h^p}d\mu_{\alpha}(\delta)
\geq\|(P_m)_{\sigma}-(P_j)_{\sigma}\|^q_{\h^p}\int_{0}^{\sigma}d\mu_{\alpha}(\delta),$$
which implies that $\{(P_j)_{\sigma}\}_{j\geq1}$ is a Cauchy sequence in $\h^p$. Consequently, there exists $g\in\h^p$ such that $(P_j)_{\sigma}\to g$ in $\h^p$ as $j\to\infty$. Then for any $s\in\C_{1/2}$, $g(s)=\lim_{j\to\infty}(P_j)_{\sigma}(s)$. Moreover, combining the fact that $P_j\to f$ in $\h^{p,q}_{\alpha}$ with Lemma \ref{growth} gives that
$$f_{\sigma}(s)=f(\sigma+s)=\lim_{j\to\infty}P_j(\sigma+s)=\lim_{j\to\infty}(P_j)_{\sigma}(s),\quad s\in\C_{1/2}.$$
Therefore, $f_{\sigma}=g\in\h^p$. Since $\sigma>0$ is arbitrary, we obtain that $f$ is a Dirichlet series that converges uniformly on $\C_{\frac{1}{2}+\epsilon}$ for any $\epsilon>0$.  In terms of the proof in  \cite[Theorem 3.5]{FGY}, we can deduce
$$\lim_{j\to\infty}\int_0^{+\infty}\|f_{\sigma}-(P_j)_{\sigma}\|^q_{\h^p}d\mu_{\alpha}(\sigma)=0,$$
which, together with the triangle (or H\"{o}lder) inequality, implies \eqref{norm}. We omit the details.
\end{proof}

Recall that for any $0<p<\infty$, the Dirichlet polynomials are dense in the Hardy space $\h^p$. Hence for any $f\in\h^p$,
\begin{equation}\label{converge}
\lim_{\sigma\to0^+}\|f_{\sigma}-f\|_{\h^p}=0.
\end{equation}
We now consider the action of the horizontal translation on $\h^{p,q}_{\alpha}$.

\begin{lemma}\label{con-sigma}
Let $0<p,q<\infty$ and $\alpha>-1$. Then the following assertions holds:
\begin{enumerate}
	\item[(1)] $\h^p\subset\h^{p,q}_{\alpha}$, and for any $f\in\h^p$, $\|f\|_{\h^{p,q}_{\alpha}}\leq\|f\|_{\h^p}$;
	\item[(2)]   for any $\sigma>0$, $T_{\sigma}$ is a contraction on $\h^{p,q}_{\alpha}$;
	\item[(3)]   for any $f\in\h^{p,q}_{\alpha}$, $f_{\sigma}\to f$ in $\h^{p,q}_{\alpha}$ as $\sigma\to0^+$.
\end{enumerate}
\end{lemma}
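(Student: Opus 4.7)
The plan is to prove the three assertions in turn, relying on Lemma \ref{decreasing} as the principal analytic input, Theorem \ref{sigma} to identify limits with Dirichlet series, and the density of $\mathcal{P}$ in both $\h^p$ and $\h^{p,q}_\alpha$.

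For part (1), I would first verify that $\mu_\alpha$ is a probability measure on $(0,+\infty)$, via the change of variables $u=2\sigma$ together with the definition of $\Gamma(\alpha+1)$. For a Dirichlet polynomial $P$, Lemma \ref{decreasing} yields $\|P_\sigma\|_{\h^p}\leq\|P\|_{\h^p}$ for every $\sigma\geq 0$, and integrating the $q$-th power against $d\mu_\alpha$ gives $\|P\|_{\h^{p,q}_\alpha}\leq\|P\|_{\h^p}$. For general $f\in\h^p$, choose polynomials $P_j\to f$ in $\h^p$; the polynomial-level bound shows $\{P_j\}$ is Cauchy also in $\h^{p,q}_\alpha$, with some limit $g\in\h^{p,q}_\alpha$. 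Lemma \ref{growth} identifies $g$ with $f$ pointwise on $\C_{1/2}$, so $f\in\h^{p,q}_\alpha$ and the desired norm inequality follows by passage to the limit.

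For part (2), observe that for any Dirichlet polynomial $P$ and $\sigma>0$ one has $(T_\sigma P)_\delta=P_{\sigma+\delta}$. Applying Lemma \ref{decreasing} pointwise in $\delta$ gives
$$\|T_\sigma P\|^q_{\h^{p,q}_\alpha}=\int_0^{+\infty}\|P_{\sigma+\delta}\|^q_{\h^p}\,d\mu_\alpha(\delta)\leq\int_0^{+\infty}\|P_\delta\|^q_{\h^p}\,d\mu_\alpha(\delta)=\|P\|^q_{\h^{p,q}_\alpha}.$$
Density of $\mathcal{P}$ in $\h^{p,q}_\alpha$ extends this contraction property to all of $\h^{p,q}_\alpha$.

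For part (3), the key is a standard three-term approximation argument. Set $r:=\min\{p,q,1\}$, so that the $r$-triangle inequality is available in all cases. Given $\epsilon>0$, pick a polynomial $P$ with $\|f-P\|_{\h^{p,q}_\alpha}<\epsilon$. Then
$$\|f_\sigma-f\|^r_{\h^{p,q}_\alpha}\leq\|f_\sigma-P_\sigma\|^r_{\h^{p,q}_\alpha}+\|P_\sigma-P\|^r_{\h^{p,q}_\alpha}+\|P-f\|^r_{\h^{p,q}_\alpha}.$$
By (2), the first summand equals $\|T_\sigma(f-P)\|^r_{\h^{p,q}_\alpha}\leq\epsilon^r$; the third is $<\epsilon^r$ by the choice of $P$; and by (1) together with \eqref{converge},
$$\|P_\sigma-P\|_{\h^{p,q}_\alpha}\leq\|P_\sigma-P\|_{\h^p}\to 0\quad\text{as }\sigma\to 0^+.$$
Letting $\sigma\to 0^+$ and then $\epsilon\to 0$ finishes the proof.

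The only subtle point is conceptual, and arises in part (1): one must ensure that the $\h^p$-limit $f$ and the $\h^{p,q}_\alpha$-limit $g$ of the same polynomial sequence $\{P_j\}$ are the same function. This is settled by the boundedness of point evaluations on both spaces (classical for $\h^p$, and Lemma \ref{growth} for $\h^{p,q}_\alpha$), which forces $f=g$ on $\C_{1/2}$. After this identification, the remaining arguments are routine applications of Lemma \ref{decreasing}, density of $\mathcal{P}$, and the $r$-triangle inequality.
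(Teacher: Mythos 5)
Your proof is correct, and for parts (1) and (2) it is essentially the paper's argument (the paper disposes of these in one line each: Lemma \ref{decreasing} plus density of $\mathcal{P}$, exactly as you spell out). The genuine difference is in part (3). The paper proves it directly from the norm identity of Theorem \ref{sigma} together with the dominated convergence theorem: writing $\|f-f_\sigma\|^q_{\h^{p,q}_{\alpha}}=\int_0^{+\infty}\|f_\delta-f_{\sigma+\delta}\|^q_{\h^p}\,d\mu_{\alpha}(\delta)$, each integrand tends to $0$ as $\sigma\to0^+$ by \eqref{converge} applied to $f_\delta\in\h^p$, and is dominated by a constant multiple of the $\mu_{\alpha}$-integrable function $\|f_\delta\|^q_{\h^p}$ thanks to Lemma \ref{decreasing} and the quasi-triangle inequality. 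Your route is instead a soft three-epsilon approximation: approximate $f$ by a polynomial $P$, control $\|f_\sigma-P_\sigma\|$ by the contraction property (2), and invoke \eqref{converge} only for the single polynomial $P$. Both arguments are valid; the paper's is shorter once Theorem \ref{sigma} is available and needs neither part (2) nor the $r$-triangle inequality, while yours avoids dominated convergence entirely and uses \eqref{converge} only on $\mathcal{P}$, at the cost of some quasi-norm bookkeeping. One point you should make explicit in part (3): the identity $\|f_\sigma-P_\sigma\|_{\h^{p,q}_{\alpha}}=\|T_\sigma(f-P)\|_{\h^{p,q}_{\alpha}}$ tacitly identifies the abstract density-extension $T_\sigma$ of part (2) with the literal horizontal translation; you justify this identification (via bounded point evaluations, Lemma \ref{growth}) only in the discussion of part (1), but it is needed again here — or one can bypass it by citing Theorems \ref{sigma} and \ref{suff} to see directly that $\|(f-P)_\sigma\|_{\h^{p,q}_{\alpha}}\leq\|f-P\|_{\h^{p,q}_{\alpha}}$ — since part (3) concerns the literal translate $f_\sigma$.
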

\begin{proof}
(1) This is clear in view of Lemma \ref{decreasing} and the density of Dirichlet polynomials in $\h^p$.

(2) By Lemma \ref{decreasing}, we know that $\|T_{\sigma}P\|_{\h^{p,q}_{\alpha}}\leq\|P\|_{\h^{p,q}_{\alpha}}$ for any Dirichlet polynomial $P$. Consequently, $T_{\sigma}$ can be extended to a contraction on $\h^{p,q}_{\alpha}$.

(3) Applying Theorem \ref{sigma} and the dominated convergence theorem, we obtain that
$$\|f-f_{\sigma}\|^q_{\h^{p,q}_{\alpha}}=\int_{0}^{+\infty}\|f_{\delta}-f_{\sigma+\delta}\|^q_{\h^p}d\mu_{\alpha}(\delta)\to0$$
as $\sigma\to0^+$.
\end{proof}

Given $0<p,q<\infty$ and $\alpha>-1$, let $L^q(d\mu_{\alpha};\h^p)$ be the space of $\h^p$-valued functions $F$ on $(0,+\infty)$ such that
$$\|F\|^q_{L^q(d\mu_{\alpha};\h^p)}:=\int_0^{+\infty}\|F(\sigma)\|^q_{\h^p}d\mu_{\alpha}(\sigma)<\infty.$$
For any $\delta>0$, let $\widetilde{T}_{\delta}$ be defined for $F\in L^q(d\mu_{\alpha};\h^p)$ by
$$\big(\widetilde{T}_{\delta}F\big)(\sigma):=T_{\delta}\big(F(\sigma)\big),\quad \sigma\in(0,+\infty).$$
Then it follows from Lemma   \ref{decreasing}    that $\widetilde{T}_{\delta}$ is a contraction on $L^q(d\mu_{\alpha};\h^p)$ for each $\delta>0$. Moreover, for any $F\in L^q(d\mu_{\alpha};\h^p)$, the dominated convergence theorem yields that
\begin{equation}\label{lq0}
\lim_{\delta\to0^+}\left\|\widetilde{T}_{\delta}F-F\right\|^q_{L^q(d\mu_{\alpha};\h^p)}
=\lim_{\delta\to0^+}\int_0^{+\infty}\left\|T_{\delta}\big(F(\sigma)\big)-F(\sigma)\right\|^q_{\h^p}d\mu_{\alpha}(\sigma)=0.
\end{equation}

\begin{theorem}\label{suff}
Let $0<p,q<\infty$ and $\alpha>-1$. Suppose that $f\in\D$ satisfies that $f_{\sigma}\in\h^p$ for all $\sigma>0$ and
$$\int_0^{+\infty}\|f_{\sigma}\|^q_{\h^p}d\mu_{\alpha}(\sigma)<\infty.$$
Then $f\in \h^{p,q}_{\alpha}$.
\end{theorem}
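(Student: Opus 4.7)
The plan is to realize $f$ as the limit in $\h^{p,q}_{\alpha}$ of its horizontal translates $f_{\delta}$ as $\delta\to 0^{+}$. Because $\h^{p,q}_{\alpha}$ is (by construction) a complete (quasi-)Banach space, the proof reduces to three assertions: (a) each $f_{\delta}$ already lies in $\h^{p,q}_{\alpha}$ for $\delta>0$; (b) the family $\{f_{\delta}\}_{\delta>0}$ is Cauchy in $\h^{p,q}_{\alpha}$ as $\delta\to 0^{+}$; and (c) the limit produced by completeness coincides with $f$ as a Dirichlet series.

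Step (a) is essentially free: by hypothesis $f_{\delta}\in\h^{p}$, and Lemma \ref{con-sigma}(1) provides the contractive inclusion $\h^{p}\subset\h^{p,q}_{\alpha}$, so $f_{\delta}\in\h^{p,q}_{\alpha}$. For step (b), since $f_{\delta_{1}}-f_{\delta_{2}}\in\h^{p,q}_{\alpha}$, Theorem \ref{sigma} allows me to rewrite
$$\|f_{\delta_{1}}-f_{\delta_{2}}\|^{q}_{\h^{p,q}_{\alpha}}=\int_{0}^{+\infty}\|f_{\sigma+\delta_{1}}-f_{\sigma+\delta_{2}}\|^{q}_{\h^{p}}\,d\mu_{\alpha}(\sigma).$$
For each fixed $\sigma>0$ the hypothesis gives $f_{\sigma}\in\h^{p}$, and \eqref{converge} applied to $f_{\sigma}$ shows $\|f_{\sigma+\delta_{j}}-f_{\sigma}\|_{\h^{p}}\to0$, so the integrand vanishes pointwise as $\delta_{1},\delta_{2}\to0^{+}$. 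To dominate it, I combine the quasi-triangle inequality with Lemma \ref{decreasing} (which gives $\|f_{\sigma+\delta_{j}}\|_{\h^{p}}\leq\|f_{\sigma}\|_{\h^{p}}$) to obtain a pointwise bound by a constant multiple of $\|f_{\sigma}\|^{q}_{\h^{p}}$; this majorant lies in $L^{1}(d\mu_{\alpha})$ by the standing hypothesis. The dominated convergence theorem then finishes step (b), and completeness of $\h^{p,q}_{\alpha}$ produces a limit $g\in\h^{p,q}_{\alpha}$ of any sequence $f_{\delta_{j}}$ with $\delta_{j}\to 0^{+}$.

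For step (c), Lemma \ref{growth} makes point evaluation at each $s\in\C_{1/2}$ a continuous functional on $\h^{p,q}_{\alpha}$, so $g(s)=\lim_{j\to\infty}f_{\delta_{j}}(s)$. Since $f\in\D$ is analytic (hence continuous) on the half-plane where it converges, and the hypothesis $f_{\sigma}\in\h^{p}$ for all $\sigma>0$ forces $f$ to extend analytically to $\C_{1/2}$ as a Dirichlet series, one has $f_{\delta_{j}}(s)=f(s+\delta_{j})\to f(s)$ for every $s\in\C_{1/2}$. Consequently $g=f$ as Dirichlet series, so $f\in\h^{p,q}_{\alpha}$, and the norm identity \eqref{norm} is an immediate consequence of Theorem \ref{sigma}.

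The main obstacle is purely bookkeeping in step (b): one must carry the quasi-triangle constants correctly in $\h^{p}$ when $p<1$ and in the outer $L^{q}$-integral when $q<1$ so as to produce an honest integrable dominator. These constants are fixed and cause no essential trouble, but they are the only delicate point in an otherwise clean translation-and-completeness argument.
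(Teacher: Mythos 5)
Your proof is correct and takes essentially the same route as the paper's: realize $f$ as the $\h^{p,q}_{\alpha}$-limit of its translates $f_{\delta}$ as $\delta\to0^{+}$, prove the Cauchy property using Lemma \ref{decreasing}, \eqref{converge} and dominated convergence, and identify the limit with $f$ via the point-evaluation bound of Lemma \ref{growth} together with the convergence of $f$ on $\C_{1/2}$. The only difference is packaging: the paper routes the dominated-convergence step through the vector-valued space $L^{q}(d\mu_{\alpha};\h^{p})$ and the operators $\widetilde{T}_{\delta}$ (equation \eqref{lq0}), then uses the contraction $T_{\sigma_{1}}$ to reduce to a one-parameter estimate, whereas you apply dominated convergence directly to the two-parameter difference --- the substance is identical.
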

\begin{proof}
Write $G(\sigma)=f_{\sigma}$ for $\sigma>0$. Then $G\in L^q(d\mu_{\alpha};\h^p)$. Fix $\epsilon>0$. In view of  \eqref{lq0}, there exists $\delta_0>0$ such that for any $0<\delta<\delta_0$,
$$\left\|\widetilde{T}_{\delta}G-G\right\|_{L^q(d\mu_{\alpha};\h^p)}<\epsilon.$$
Then for any $0<\sigma_1<\sigma_2<\delta_0$, bearing in mind that $T_{\sigma_1}$ is a contraction on $\h^p$, we deduce from Theorem \ref{sigma} and Lemma \ref{con-sigma} that
\begin{align*}
\|f_{\sigma_1}-f_{\sigma_2}\|^q_{\h^{p,q}_{\alpha}}
&=\int_0^{+\infty}\left\|(f_{\sigma_1})_{\sigma}-(f_{\sigma_2})_{\sigma}\right\|^q_{\h^p}d\mu_{\alpha}(\sigma)\\
&=\int_0^{+\infty}\left\|T_{\sigma_1}(f_{\sigma})-T_{\sigma_2}(f_{\sigma})\right\|^q_{\h^p}d\mu_{\alpha}(\sigma)\\
&\leq\int_0^{+\infty}\left\|f_{\sigma}-T_{\sigma_2-\sigma_1}(f_{\sigma})\right\|^q_{\h^p}d\mu_{\alpha}(\sigma)\\
&=\int_0^{+\infty}\left\|G(\sigma)-T_{\sigma_2-\sigma_1}\big(G(\sigma)\big)\right\|^q_{\h^p}d\mu_{\alpha}(\sigma)\\
&=\left\|G-\widetilde{T}_{\sigma_2-\sigma_1}G\right\|^q_{L^q(d\mu_{\alpha};\h^p)}\\
&<\epsilon^q.
\end{align*}
Consequently, there exists $g\in\h^{p,q}_{\alpha}$ such that $f_{\sigma}\to g$ in $\h^{p,q}_{\alpha}$ as $\sigma\to0^+$. Then Lemma \ref{growth} implies that $g(s)=\lim_{\sigma\to0^+}f_{\sigma}(s)$ for any $s\in\C_{1/2}$. Combining this with the fact that $f_{\sigma}\in\h^p$ for all $\sigma>0$, we have that
$$f(s)=\lim_{\sigma\to0^+}f(s+\sigma)=\lim_{\sigma\to0^+}f_{\sigma}(s)=g(s),\quad \forall s\in\C_{1/2}.$$
Therefore, $f=g\in\h^{p,q}_{\alpha}$.
\end{proof}

Note that for $f(s)=\sum_{n=1}^{\infty}a_nn^{-s}$ in $\D$ and $\sigma>0$, $\|f_{\sigma}\|^2_{\h^2}=\sum_{n=1}^{\infty}|a_n|^2n^{-2\sigma}$. As an immediate consequence of Theorems \ref{sigma} and \ref{suff}, we have
\begin{equation}\label{222}
f\in\h^{2,q}_{\alpha}\quad  \Longleftrightarrow\quad \int_{0}^{+\infty}\left(\sum_{n=1}^{\infty}|a_n|^2n^{-2\sigma}\right)^{q/2}d\mu_{\alpha}(\sigma)<\infty.
\end{equation}

\section{The symbol spaces $(\h^{p,q}_{\alpha})_{\star}$}\label{sec3}

The purpose of this section is to determine when a random Dirichlet series almost surely belongs to the Hardy spaces $\h^p$ or the Bergman spaces $\A^p_{\alpha}$. In fact, we will completely describe the symbol spaces $(\h^{p,q}_{\alpha})_{\star}$ for all $0<p<\infty$, $0<q\leq\infty$ and $\alpha>-1$.

For every positive integer $N$, let $S_N$ be the partial sum operator defined by
$$S_N\left(\sum_{n=1}^{\infty}a_nn^{-s}\right)=\sum_{n=1}^Na_nn^{-s}.$$
Recall that if $p>1$, then the operators $S_N$ are uniformly bounded on $\h^p$, i.e.
$$\sup_{N\geq1}\|S_N\|_{\h^p\to\h^p}<\infty$$
(see \cite[Corollary 4]{AOS}), which easily implies that $S_N$ are uniformly bounded on $\h^{p,q}_{\alpha}$. Consequently, in the case $p>1$ and $q\geq1$, the set $\{n^{-s}:n\geq1\}$ forms a Schauder basis of $\h^{p,q}_{\alpha}$, and $S_Nf\to f$ in $\h^{p,q}_{\alpha}$ for any $f\in\h^{p,q}_{\alpha}$. The following lemma indicates that, when we consider random Dirichlet series, then the above convergence of partial sum holds for all $p,q>0$. This is quite interesting since it was proved in \cite[Section 5]{BBSS} that for $p\leq1$,
$\|S_N\|_{\h^p\to\h^p}\to+\infty$ as $N\to\infty$.

\begin{lemma}\label{random-partial}
 Assume $0<p<\infty$, $0<q\leq\infty$ and  $\alpha>-1.$ Let $\{X_n\}$ be a sequence of independent and symmetric random variables. If $f(s)=\sum_{n=1}^{\infty}a_nX_nn^{-s}$ belongs to $\h^{p,q}_{\alpha}$ almost surely, then $S_Nf(s)=\sum_{n=1}^{N}a_nX_nn^{-s}$ converges to $f$ in $\h^{p,q}_{\alpha}$ almost surely as $N\to\infty$.
\end{lemma}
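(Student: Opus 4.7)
The plan is to view $S_N f = \xi_1 + \cdots + \xi_N$ with $\xi_n := a_n X_n \cdot n^{-s}$ as partial sums of independent symmetric $\h^{p,q}_\alpha$-valued random variables, and to invoke the It\^o--Nisio theorem applied to the separating family of point evaluations supplied by Lemma \ref{growth}. First, I would interpret the hypothesis as providing, on an event of probability one, an $\h^{p,q}_\alpha$-valued random element $F$ whose Dirichlet coefficients are $(a_n X_n)_{n \geq 1}$; this $F$ will be the candidate limit. The summands $\xi_n$ are independent because the $X_n$ are, and symmetric because each $X_n$ is, so each $S_N f$ is indeed a symmetric partial sum.

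Next, I would produce a countable separating family of continuous functionals. By Lemma \ref{growth}, each point evaluation $\delta_s \colon g \mapsto g(s)$ with $s \in \C_{1/2}$ is a bounded linear functional on $\h^{p,q}_\alpha$. Moreover, Theorem \ref{sigma} guarantees that every element of $\h^{p,q}_\alpha$ is holomorphic on $\C_{1/2}$, so for any countable dense subset $\{s_k\} \subset \C_{1/2}$ the family $\{\delta_{s_k}\}_{k \geq 1}$ separates points of $\h^{p,q}_\alpha$. Theorem \ref{sigma} further asserts that the Dirichlet series of $F$ converges uniformly on each $\C_{1/2+\epsilon}$, so for every $s \in \C_{1/2}$,
\[
\delta_s(S_N f) = \sum_{n=1}^N a_n X_n n^{-s} \longrightarrow F(s) = \delta_s(F) \quad \text{almost surely},
\]
and in particular in probability.

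I would then appeal to the It\^o--Nisio theorem: if $(\xi_n)$ are independent symmetric $E$-valued random variables in a separable Banach space $E$, and if there exists an $E$-valued random variable $S$ such that $\phi(S_N) \to \phi(S)$ in probability for every $\phi$ in a countable separating subset of $E^*$, then $S_N \to S$ almost surely in $E$. Applied with $E = \h^{p,q}_\alpha$, partial sums $S_N f$, limit $F$, and the separating family $\{\delta_{s_k}\}_{k \geq 1}$, this delivers $S_N f \to f$ almost surely in $\h^{p,q}_\alpha$, which is the desired conclusion.

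The main technical hurdle is that when $\min\{p,q\} < 1$ the space $\h^{p,q}_\alpha$ is only an $r$-Banach space with $r = \min\{p,q,1\}$, whereas the classical It\^o--Nisio theorem is stated for Banach spaces. This should not be a serious issue: the quantity $\|\cdot\|_{\h^{p,q}_\alpha}^r$ is a translation-invariant metric that turns $\h^{p,q}_\alpha$ into a separable complete metric topological group, and the L\'evy--Octaviani maximal inequality for sums of independent symmetric variables, which powers the It\^o--Nisio argument, remains valid in this setting. In the written proof I would either cite the $r$-Banach version of It\^o--Nisio from the literature on probability in quasi-Banach spaces or include a short preliminary lemma recording the adaptation.
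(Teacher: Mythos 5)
Your route is genuinely different from the paper's. The paper never invokes It\^{o}--Nisio with point evaluations; instead it forms the horizontal translates $f_{\sigma_m}(s)=\sum_{n}a_n n^{-\sigma_m}X_n n^{-s}$ for a sequence $\sigma_m\downarrow 0$, observes via Lemmas \ref{decreasing} and \ref{con-sigma} that almost surely these lie in $\h^{p,q}_{\alpha}$ and converge to $f$ there, and then applies the Marcinkiewicz--Zygmund--Kahane multiplier theorem, in the $r$-Banach form recorded in \cite{CFL22}, with multipliers $n^{-\sigma_m}\to 1$. On the Banach range $\min\{p,q\}\geq 1$ your argument is correct and complete, and arguably cleaner: point evaluations at a countable dense subset of $\C_{1/2}$ are bounded by Lemma \ref{growth} and separating by Theorem \ref{sigma} plus analytic uniqueness; the coordinatewise almost sure convergence $S_Nf(s_k)\to f(s_k)$ follows from Theorem \ref{sigma}; and the It\^{o}--Nisio theorem in the ``separating (total) subset of the dual'' form (see, e.g., \cite{K85}) applies. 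A genuine advantage of this route is that it needs only \emph{membership} of the limit in the space, whereas the multiplier theorem also requires the mollified series themselves to converge in norm. In particular your proof covers $p=1$, $q\geq 1$, where the lemma is not deterministic.

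The genuine gap is the quasi-Banach range $\min\{p,q\}<1$, which is where the lemma carries most of its content (for $p>1$ the operators $S_N$ are uniformly bounded on $\h^{p,q}_{\alpha}$, so no randomness is needed). Your claim that the L\'{e}vy--Octaviani inequality is what ``powers'' It\^{o}--Nisio, so that the theorem transfers routinely to $r$-Banach spaces, is inaccurate. L\'{e}vy--Octaviani does hold for the metric $\|\cdot\|^r$, but it yields only the equivalence of almost sure convergence and convergence in probability. The implication you actually need --- from convergence against a separating family of functionals to convergence in probability in norm --- is proved in every standard source by a tightness argument: one takes a compact set $K$ with $\mathbb{P}(S\in K)$ close to $1$, replaces $K$ by its \emph{closed convex hull}, and uses the reflection identity in law $(S_N,S)\overset{d}{=}(S_N,2S_N-S)$ to transfer tightness from $S$ to the partial sums. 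That step rests on Mazur's theorem (closed convex hulls of compact sets are compact), i.e., on local convexity, and it genuinely fails in $r$-Banach spaces: already in $\ell^r$ with $r<1$ there are norm-null sequences whose convex hulls are unbounded. No ``$r$-Banach It\^{o}--Nisio with separating dual'' appears in the standard references, and producing one is not a short preliminary lemma; this obstruction is precisely what the paper's choice of the Marcinkiewicz--Zygmund--Kahane theorem, available in $r$-Banach form through \cite{CFL22}, is designed to bypass. To repair your proof for $\min\{p,q\}<1$ you would either have to establish such an extension --- plausibly by exploiting the concrete representation of $\h^{p,q}_{\alpha}$ inside $L^q(d\mu_{\alpha};L^p(\TT))$ and arguing with Fubini and scalar symmetric series rather than abstractly --- or fall back on the paper's multiplier argument.
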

\begin{proof}
Let $\{\sigma_m\}_{m\geq1}$ be a decreasing sequence of positive numbers that converges to $0$. Then for any $n\geq1$, $n^{-\sigma_m}\to1$ as $m\to\infty$. In view of \eqref{converge}, Lemmas \ref{decreasing} and \ref{con-sigma}, for each $m\geq1$, $f_{\sigma_m}(s)=T_{\sigma_m}f(s)=\sum_{n=1}^{\infty}a_nn^{-\sigma_m}X_nn^{-s}$ belongs to $\h^{p,q}_{\alpha}$ almost surely, and $f_{\sigma_{m}}\to f$ in $\h^{p,q}_{\alpha}$ almost surely as $m\to\infty$. Therefore, we can apply the Marcinkiewicz--Zygmund--Kahane theorem (see \cite[p. 240, Theorem II.4]{LQ2} and \cite[p. 11]{CFL22}) to deduce that $S_Nf\to f$ in $\h^{p,q}_{\alpha}$ almost surely as $N\to\infty$.
\end{proof}

The following result generalizes \cite[Theorem 8]{CFL22} to the Dirichlet series setting.

\begin{theorem}
 Assume $0<p<\infty$, $0<q\leq\infty$ and  $\alpha>-1.$ Let  $\{X_n\}$ be a standard random sequence. Let $f(s)=\sum_{n=1}^{\infty}a_nn^{-s}$ belong to $\D$. Then the following statements are equivalent:
\begin{enumerate}
	\item [(a)] $\R f\in\h^{p,q}_{\alpha}$ almost surely;
	\item [(b)] $\sup_{N\geq1}\left\|\sum_{n=1}^Na_nX_nn^{-s}\right\|_{\h^{p,q}_{\alpha}}<\infty$ almost surely;
	\item [(c)] $\E\left(\|\R f\|^t_{\h^{p,q}_{\alpha}}\right)<\infty$ for some $t>0$;
	\item [(d)] $\E\left(\|\R f\|^t_{\h^{p,q}_{\alpha}}\right)<\infty$ for any $t>0$;
	\item [(e)] $\E\left(\exp\left(\lambda\|\R f\|^r_{\h^{p,q}_{\alpha}}\right)\right)<\infty$ for some $\lambda>0$, where $r=\min\{p,q,1\}$;
	\item [(f)] $f\in\h^{2,q}_{\alpha}$.
\end{enumerate}
\end{theorem}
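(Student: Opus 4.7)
The plan is to prove the six equivalences through the cycle $(a)\Leftrightarrow(b)$, $(c)\Rightarrow(a)\Rightarrow(e)\Rightarrow(d)\Rightarrow(c)$, and $(c)\Leftrightarrow(f)$. The equivalence $(a)\Leftrightarrow(b)$ is immediate from Lemma \ref{random-partial}, while $(e)\Rightarrow(d)\Rightarrow(c)\Rightarrow(a)$ is routine (using $x^t\lesssim e^{\lambda x^r/2}$ and that a random variable with finite $t$-th moment is a.s.\ finite). For $(a)\Rightarrow(e)$, I would appeal to Fernique's theorem in the $r$-Banach space $\h^{p,q}_\alpha$ with $r=\min\{p,q,1\}$: since every $X_n$ is symmetric, $\R f$ is a symmetric $\h^{p,q}_\alpha$-valued random variable, and hypothesis $(a)$ gives $\|\R f\|_{\h^{p,q}_\alpha}<\infty$ almost surely, so Fernique yields $\E\exp(\lambda\|\R f\|^r_{\h^{p,q}_\alpha})<\infty$ for some $\lambda>0$.

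The heart of the argument is $(c)\Leftrightarrow(f)$, which rests on the polynomial identity
\[
\E\|\R Q\|^q_{\h^{p,q}_\alpha}\asymp\|Q\|^q_{\h^{2,q}_\alpha},\qquad Q=\sum_n b_n n^{-s}\in\mathcal{P}.
\]
To establish this, apply Tonelli to write $\E\|\R Q\|^q_{\h^{p,q}_\alpha}=\int_0^{+\infty}\E\|\R Q_\sigma\|^q_{\h^p}\,d\mu_\alpha(\sigma)$, invoke the Khintchine--Kahane inequality in the Banach space $\h^p$ to equate the $q$-th and $p$-th moments of $\|\R Q_\sigma\|_{\h^p}$, pass to the $L^p(\TT)$ representation via Bohr's lift, and apply the scalar Khintchine inequality pointwise on $\TT$ (using $|z^{\nu(n)}|=1$); then \eqref{222} gives the conclusion.

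For $(f)\Rightarrow(c)$, the polynomial identity makes $\{\R P_N\}$ Cauchy in $L^q(\Omega;\h^{p,q}_\alpha)$ whenever $f\in\h^{2,q}_\alpha$, hence convergent to some $F$; a subsequence converges a.s.\ in $\h^{p,q}_\alpha$, while for each $s\in\C_{1/2}$ one has $\R P_N(s)\to\R f(s)$ a.s.\ by Kolmogorov's theorem (since Theorem \ref{sigma} yields $\sum|a_n|^2 n^{-2\Re s}<\infty$), and Lemma \ref{growth} then identifies $F=\R f$ a.s. For $(c)\Rightarrow(f)$, Khintchine--Kahane upgrades (c) to $\E\|\R f\|^q_{\h^{p,q}_\alpha}<\infty$; a contraction (or L\'evy-type maximal) inequality for symmetric series in $r$-Banach spaces yields $\E\|\R P_N\|^q_{\h^{p,q}_\alpha}\lesssim\E\|\R f\|^q_{\h^{p,q}_\alpha}$ uniformly in $N$; the polynomial identity converts this to a uniform bound on $\|P_N\|_{\h^{2,q}_\alpha}$, and monotone convergence on $\int_0^{+\infty}\bigl(\sum_{n=1}^N|a_n|^2 n^{-2\sigma}\bigr)^{q/2}\,d\mu_\alpha(\sigma)$ forces $f\in\h^{2,q}_\alpha$.

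The main obstacle is this final passage from polynomials to general $f$: one must carefully identify the $L^q$-limit with the pointwise-defined series $\R f$ using Lemma \ref{growth}, and secure the symmetric-series bound $\E\|\R P_N\|^q_{\h^{p,q}_\alpha}\lesssim\E\|\R f\|^q_{\h^{p,q}_\alpha}$ in the genuinely quasi-Banach regime $\min\{p,q\}<1$, which calls for an $r$-Banach analogue of L\'evy's maximal inequality or Kahane's contraction principle.
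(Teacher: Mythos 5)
Your core computation for (c)$\Leftrightarrow$(f) --- Tonelli, Khintchine--Kahane in $\h^p$, the Bohr lift, the scalar Khintchine inequality, and then \eqref{222} --- is exactly the chain the paper uses to prove (d)$\Leftrightarrow$(f); your variant, establishing the moment identity first for Dirichlet polynomials and then passing to general $f$ via $L^q(\Omega;\h^{p,q}_{\alpha})$-limits, a L\'evy-type maximal inequality, and monotone convergence, is a legitimate (and in fact more careful) rendering of the paper's formal computation, which applies Khintchine--Kahane directly to $\R f$. The genuine gaps lie elsewhere, in how you connect (a), (b) and (e).

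First, Lemma \ref{random-partial} does not give (a)$\Leftrightarrow$(b): its hypothesis is precisely (a), so it yields only (a)$\Rightarrow$(b). As written, your proof graph has no edge leaving (b), so the equivalence of (b) with the remaining five statements is never established. The paper closes the cycle with the implication (b)$\Rightarrow$(e), quoting \cite[Lemma 9]{CFL22}: for a series of independent symmetric summands in an $r$-Banach space, almost sure boundedness of the partial sums already forces exponential integrability. Second, your step (a)$\Rightarrow$(e) rests on a misstatement of Fernique's theorem: symmetry of an $\h^{p,q}_{\alpha}$-valued random variable together with almost sure finiteness of its norm does \emph{not} imply $\E\exp\left(\lambda\|\cdot\|^r_{\h^{p,q}_{\alpha}}\right)<\infty$ (a symmetric Cauchy variable in $\C$ is already a counterexample). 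The true statement needs the series structure --- independent symmetric summands whose partial sums are a.s.\ bounded in norm, i.e.\ condition (b) --- and, moreover, Fernique proper is a Gaussian theorem, so the Bernoulli and Steinhaus cases require a separate concentration argument; all of this is what \cite[Lemma 9]{CFL22} packages. Both defects admit the same repair, which is exactly the paper's route: use Lemma \ref{random-partial} only for (a)$\Rightarrow$(b), invoke \cite[Lemma 9]{CFL22} for (b)$\Rightarrow$(e), and keep (e)$\Rightarrow$(d)$\Rightarrow$(c)$\Rightarrow$(a) as you have them; your treatment of (c)$\Leftrightarrow$(f) can then stand, provided you justify the L\'evy inequality in the quasi-Banach regime (or replace it by the exponential-moment bound for the partial sums that (e) now supplies).
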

\begin{proof}
The implication (a)$\Longrightarrow$(b) follows from Lemma \ref{random-partial}, and the implication (b)$\Longrightarrow$(e) follows from  \cite[Lemma 9]{CFL22}. The implications (e)$\Longrightarrow$(d)$\Longrightarrow$(c)$\Longrightarrow$(a) are clear. It remains to establish the equivalence (d)$\Longleftrightarrow$(f). In the case $q<\infty$, for any $t>0$, by Fubini's theorem and the Khintchine--Kahane inequality (see \cite[p. 139, Theorem V.1]{LQ1} and \cite[Lemma 11]{CFL22}), we establish that
\begin{align*}
&\bigg(\E\Big(\|\R f\|^t_{\h^{p,q}_{\alpha}}\Big)\bigg)^{1/t}\\
&\quad\asymp\left(\E\left(\|\R f\|^q_{\h^{p,q}_{\alpha}}\right)\right)^{1/q}\\
&\quad=\left(\E\int_0^{+\infty}\left\|(\R f)_{\sigma}\right\|^q_{\h^p}d\mu_{\alpha}(\sigma)\right)^{1/q}\\
&\quad=\left(\int_0^{+\infty}\E\big(\left\|(\R f)_{\sigma}\right\|^q_{\h^p}\big)d\mu_{\alpha}(\sigma)\right)^{1/q}\\
&\quad\asymp\left(\int_0^{+\infty}\Big(\E\big(\left\|(\R f)_{\sigma}\right\|^p_{\h^p}\big)\Big)^{q/p}d\mu_{\alpha}(\sigma)\right)^{1/q}\\
&\quad=\left(\int_0^{+\infty}\left(\E\int_{\TT}\left|\sum_{n=1}^{\infty}a_nn^{-\sigma}X_nz^{\nu(n)}\right|^p
  dm_{\infty}(z)\right)^{q/p}d\mu_{\alpha}(\sigma)\right)^{1/q}\\
&\quad=\left(\int_0^{+\infty}\left(\int_{\TT}\E\left|\sum_{n=1}^{\infty}a_nn^{-\sigma}X_nz^{\nu(n)}\right|^p
  dm_{\infty}(z)\right)^{q/p}d\mu_{\alpha}(\sigma)\right)^{1/q}\\
&\quad\asymp\left(\int_{0}^{+\infty}\left(\sum_{n=1}^{\infty}|a_n|^2n^{-2\sigma}\right)^{q/2}d\mu_{\alpha}(\sigma)\right)^{1/q},
\end{align*}
which, in conjunction with \eqref{222}, yields the equivalence (d)$\Longleftrightarrow$(f) for this case. In the case $q=\infty$, it is similar to obtain that
$$\bigg(\E\Big(\|\R f\|^t_{\h^p}\Big)\bigg)^{1/t}\asymp\left(\sum_{n=1}^{\infty}|a_n|^2\right)^{1/2},$$
which also implies the equivalence (d)$\Longleftrightarrow$(f) and finishes the proof.
\end{proof}

In particular, we can determine the symbol spaces  $(\h^{p,q}_{\alpha})_{\star}$ for all $0<p<\infty$, $0<q\leq\infty$ and $\alpha>-1$ below,  which covers Theorem \ref{symbolA}. Consequently, Theorem \ref{remHardy} holds automatically as explained in the Introduction.

\begin{corollary}\label{sym}
Let $0<p<\infty$, $0<q\leq\infty$, $\alpha>-1$, and let $\{X_n\}$ be a standard random sequence. Then $(\h^{p,q}_{\alpha})_{\star}=\h^{2,q}_{\alpha}$. In particular, $(\h^p)_{\star}=\h^2$ and $(\A^p_{\alpha})_{\star}=\h^{2,p}_{\alpha}$.
\end{corollary}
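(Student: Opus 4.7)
The plan is to read this statement off directly from the equivalence (a)$\Longleftrightarrow$(f) in the preceding theorem, together with the definition of the symbol space. By definition,
\[
(\h^{p,q}_{\alpha})_{\star} = \{ f\in\D : \mathbb{P}(\R f \in \h^{p,q}_{\alpha}) = 1\},
\]
and by the Hewitt--Savage zero-one law (which is invoked just before Theorem \ref{symbolA} to justify that this probability is always $0$ or $1$), the condition $\mathbb{P}(\R f\in\h^{p,q}_{\alpha})=1$ is the same as $\R f\in\h^{p,q}_{\alpha}$ almost surely, i.e.\ statement (a) of the previous theorem. Applying the equivalence (a)$\Longleftrightarrow$(f) then gives $f\in(\h^{p,q}_{\alpha})_{\star}$ if and only if $f\in\h^{2,q}_{\alpha}$, which is the first assertion of the corollary.

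For the two particular cases, I would simply specialize the parameters. Taking $q=\infty$ and recalling the convention $\h^{p,\infty}_{\alpha}:=\h^p$ from Section 2 (which in particular makes $\h^{2,\infty}_{\alpha}=\h^2$, independent of $\alpha$), the general identity reduces to $(\h^p)_{\star}=\h^2$. Taking $p=q$ and using $\h^{p,p}_{\alpha}=\A^p_{\alpha}$, it reduces to $(\A^p_{\alpha})_{\star}=\h^{2,p}_{\alpha}$.

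Since the whole content has already been proved in the preceding theorem, there is no genuine obstacle here; the only subtlety is making sure the case $q=\infty$ is handled, but this is covered because the previous theorem was stated and proved for the full range $0<p<\infty$, $0<q\leq\infty$ (the $q=\infty$ branch of the computation of $\mathbb{E}(\|\R f\|^t_{\h^p})^{1/t}\asymp (\sum |a_n|^2)^{1/2}$ is explicitly addressed at the end of that proof). Thus the corollary is obtained as a direct transcription, and no new estimates are needed.
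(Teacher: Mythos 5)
Your proposal is correct and matches the paper's own treatment exactly: the corollary is stated there as an immediate consequence of the equivalence (a)$\Longleftrightarrow$(f) in the preceding theorem, with the two special cases obtained by setting $q=\infty$ (using the convention $\h^{p,\infty}_{\alpha}=\h^p$) and $p=q$ (using $\h^{p,p}_{\alpha}=\A^p_{\alpha}$), just as you do. No further argument is needed.
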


\section{Embedding theorems}\label{sec4}

In this section, we are going to prove Theorem \ref{em} and  establish the Littlewood-type theorem for the Bergman spaces of Dirichlet series.  To this end, we establish two auxiliary results.  The first one estimates the growth of $\|f_{\sigma}\|_{\h^p}$ for $f\in\h^{p,q}_{\alpha}$, and the second one gives a family of test functions in the spaces $\h^{p,q}_{\alpha}$.

\begin{lemma}\label{mean-growth}
Let $0<p,q<\infty$ and $\alpha>-1$. Then for any $f\in\h^{p,q}_{\alpha}$, $\sigma>0$ and $\kappa\in(0,1]$,
$$\|f_{\sigma}\|_{\h^p}\leq\left(\frac{\Gamma(\alpha+2)}{2^{\alpha+1}}\right)^{1/q}\kappa^{-\frac{\alpha+1}{q}}\sigma^{-\frac{\alpha+1}{q}}
e^{\frac{2\kappa\sigma}{q}}\|f\|_{\h^{p,q}_{\alpha}}.$$
\end{lemma}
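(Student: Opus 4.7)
The plan is to combine the monotonicity of $\sigma \mapsto \|f_\sigma\|_{\h^p}$ (Lemma \ref{decreasing}) with a crude but sharp-enough lower bound on the mass $\mu_\alpha((0,\kappa\sigma])$. The auxiliary parameter $\kappa$ is the freedom one keeps in order to rebalance the exponential factor $e^{-2\delta}$ against the algebraic factor $\delta^\alpha$ in the definition of $d\mu_\alpha$.

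First I would fix $f\in\h^{p,q}_\alpha$, $\sigma>0$ and $\kappa\in(0,1]$, and observe that since $\kappa\sigma\le\sigma$, Lemma \ref{decreasing} gives $\|f_\delta\|_{\h^p}\ge\|f_\sigma\|_{\h^p}$ for every $\delta\in(0,\kappa\sigma]$. Raising to the $q$-th power and integrating against $d\mu_\alpha$ on $(0,\kappa\sigma]$, and using Theorem \ref{sigma} to identify the full integral with $\|f\|^q_{\h^{p,q}_\alpha}$, I would obtain
\begin{equation*}
\|f_\sigma\|^q_{\h^p}\,\mu_\alpha\bigl((0,\kappa\sigma]\bigr)
\;\le\;\int_0^{\kappa\sigma}\|f_\delta\|^q_{\h^p}\,d\mu_\alpha(\delta)
\;\le\;\|f\|^q_{\h^{p,q}_\alpha}.
\end{equation*}

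Next I would estimate $\mu_\alpha((0,\kappa\sigma])$ from below. Using $e^{-2\delta}\ge e^{-2\kappa\sigma}$ for $\delta\in[0,\kappa\sigma]$ and the elementary integral $\int_0^{\kappa\sigma}\delta^\alpha\,d\delta=(\kappa\sigma)^{\alpha+1}/(\alpha+1)$, I get
\begin{equation*}
\mu_\alpha\bigl((0,\kappa\sigma]\bigr)
=\frac{2^{\alpha+1}}{\Gamma(\alpha+1)}\int_0^{\kappa\sigma}\delta^\alpha e^{-2\delta}\,d\delta
\;\ge\;\frac{2^{\alpha+1}}{\Gamma(\alpha+2)}\,(\kappa\sigma)^{\alpha+1}e^{-2\kappa\sigma},
\end{equation*}
after using $(\alpha+1)\Gamma(\alpha+1)=\Gamma(\alpha+2)$.

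Substituting this lower bound into the previous display and taking $q$-th roots gives exactly the stated inequality
\begin{equation*}
\|f_\sigma\|_{\h^p}\le\left(\frac{\Gamma(\alpha+2)}{2^{\alpha+1}}\right)^{1/q}\!\kappa^{-\frac{\alpha+1}{q}}\sigma^{-\frac{\alpha+1}{q}}e^{\frac{2\kappa\sigma}{q}}\|f\|_{\h^{p,q}_\alpha}.
\end{equation*}
There is no serious obstacle here: the only mild subtlety is keeping the free parameter $\kappa$ throughout so that the exponential factor $e^{2\kappa\sigma/q}$ can later be tuned (e.g.\ by choosing $\kappa=1/\sigma$ for large $\sigma$) when this lemma is applied in the proof of the embedding theorem.
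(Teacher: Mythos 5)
Your proof is correct and follows essentially the same route as the paper: both arguments combine the monotonicity of $\delta\mapsto\|f_\delta\|_{\h^p}$ (from Lemma \ref{decreasing} together with Theorem \ref{sigma}) with the norm identity \eqref{norm}, restrict the integral to $(0,\kappa\sigma]$, and bound $\int_0^{\kappa\sigma}\delta^\alpha e^{-2\delta}\,d\delta$ from below by $e^{-2\kappa\sigma}(\kappa\sigma)^{\alpha+1}/(\alpha+1)$. The only difference is cosmetic ordering of the two restriction steps, so there is nothing to add.
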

\begin{proof}
In view of Theorem \ref{sigma} and Lemma \ref{decreasing},  the function $\delta\mapsto\|f_{\delta}\|_{\h^p}$ is decreasing on $(0,+\infty)$. Therefore,
\begin{align*}
\|f\|^q_{\h^{p,q}_{\alpha}}
&=\int_0^{+\infty}\|f_{\delta}\|^q_{\h^p}d\mu_{\alpha}(\delta)\\
&\geq\frac{2^{\alpha+1}}{\Gamma(\alpha+1)}\int_0^{\sigma}\|f_{\delta}\|^q_{\h^p}\delta^{\alpha}e^{-2\delta}d\delta\\
&\geq\frac{2^{\alpha+1}}{\Gamma(\alpha+1)}\|f_{\sigma}\|^q_{\h^p}\int_0^{\kappa\sigma}\delta^{\alpha}e^{-2\delta}d\delta\\
&\geq\frac{2^{\alpha+1}}{\Gamma(\alpha+2)}(\kappa\sigma)^{\alpha+1}e^{-2\kappa\sigma}\|f_{\sigma}\|^q_{\h^p},
\end{align*}
which completes the proof.
\end{proof}

\begin{lemma}\label{lacunary}
Let $0<p,q<\infty$ and $\alpha>-1$. Suppose that $f(s)=\sum_{n=0}^{\infty}a_n2^{-2^ns}$ belongs to $\D$. Then $f\in\h^{p,q}_{\alpha}$ if and only if $\left\{2^{-(\alpha+1)n/q}a_n\right\}\in l^q$.
\end{lemma}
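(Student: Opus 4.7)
The plan is to use the Bohr lift to reduce $f$ to a one-variable lacunary power series, invoke the classical equivalence of Hardy space norms for lacunary series to convert $\|f_\sigma\|_{\h^p}$ into an explicit coefficient sum, and finally estimate the resulting integral by a dyadic decomposition.

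\emph{Step 1 (lacunary reduction).} Under the Bohr lift, $f(s)=\sum_{n\ge 0}a_n 2^{-2^n s}$ corresponds to $\mathscr{B}f(z)=\sum_{n\ge 0}a_n z_1^{2^n}$, a Hadamard lacunary power series depending only on the first variable. By Fubini, $\|f_\sigma\|_{\h^p}=\bigl\|\sum_n a_n 2^{-\sigma 2^n}z_1^{2^n}\bigr\|_{H^p(\T)}$, so the classical Paley--Zygmund theorem for lacunary Hardy space norms gives, for every $0<p<\infty$,
\[
\|f_\sigma\|_{\h^p}^{\,2}\;\asymp\;\sum_{n\ge 0}|a_n|^2\,2^{-2\sigma\cdot 2^n}.
\]
By Theorems \ref{sigma} and \ref{suff}, the membership $f\in\h^{p,q}_\alpha$ is then equivalent to the finiteness of
\[
I\;:=\;\int_0^\infty\Big(\sum_{n\ge 0}|a_n|^2\,2^{-2\sigma\cdot 2^n}\Big)^{q/2}\,d\mu_\alpha(\sigma).
\]

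\emph{Step 2 (necessity, any $q$).} On the pairwise disjoint intervals $J_n:=[2^{-n-1},2^{-n}]$ we have $2\sigma\cdot 2^n\in[1,2]$ and $\mu_\alpha(J_n)\asymp 2^{-n(\alpha+1)}$. Keeping only the $n$-th term of the inner sum,
\[
I\;\ge\;\sum_n\int_{J_n}|a_n|^q\,2^{-q\sigma\cdot 2^n}\,d\mu_\alpha(\sigma)\;\gtrsim\;\sum_n 2^{-n(\alpha+1)}|a_n|^q,
\]
which is exactly the condition $\{2^{-n(\alpha+1)/q}a_n\}\in\ell^q$.

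\emph{Step 3 (sufficiency).} For $0<q\le 2$, the pointwise inequality $(\sum x_n)^{q/2}\le\sum x_n^{q/2}$ applied under the integral already yields $I\lesssim\sum_n|a_n|^q\int_0^\infty 2^{-q\sigma\cdot 2^n}\,d\mu_\alpha(\sigma)\asymp\sum_n 2^{-n(\alpha+1)}|a_n|^q$. For $q\ge 2$, split the integral over the dyadic intervals $I_N:=[2^{-N-1},2^{-N}]$ (plus a trivial tail on $(1,\infty)$). On $I_N$, each term with $n\le N$ contributes a quantity comparable to $|a_n|^2$, while each term with $n=N+k$, $k\ge 1$, is at most $|a_{N+k}|^2 2^{-2^k}$; thus the inner sum on $I_N$ is bounded by $A_N+T_N$, where $A_N:=\sum_{n\le N}|a_n|^2$ and $T_N:=\sum_{k\ge 1}|a_{N+k}|^2 2^{-2^k}$. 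A weighted H\"older argument tailored to the double-exponential tail weights absorbs $\sum_N 2^{-N(\alpha+1)}T_N^{q/2}$ into $\sum_n 2^{-n(\alpha+1)}|a_n|^q$, and the main contribution is controlled via the discrete Hardy-type inequality
\[
\sum_N 2^{-N(\alpha+1)}A_N^{q/2}\;\lesssim\;\sum_n 2^{-n(\alpha+1)}|a_n|^q,
\]
which, after swapping the order of summation, reduces to the pointwise bound $A_N^{q/2}\lesssim\sum_{n=0}^N|a_n|^q 2^{\mu(N-n)}$ for any $\mu\in(0,\alpha+1)$; this in turn follows from H\"older's inequality with exponents $q/2$ and $(q/2)'$ against a suitable geometric weight.

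\emph{Main obstacle.} The substantive point is the sufficiency for $q\ge 2$: the weight $\mu$ in the Hardy-type inequality must be chosen strong enough to let the auxiliary H\"older sum converge, yet weak enough to remain controlled by $2^{-N(\alpha+1)}$ after summation in $N$. Once this Hardy-type estimate is in place, the super-exponential decay of the off-diagonal contributions makes the tail absorption essentially routine.
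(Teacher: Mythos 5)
Your argument is correct, and its first half coincides with the paper's proof: both pass through the Bohr lift to view each $f_\sigma$ as a one-variable Hadamard lacunary power series, use the classical equivalence of $H^p(\DD)$ norms for lacunary series (the paper cites \cite[Theorem 6.2.2]{JVA}) to get $\|f_\sigma\|_{\h^p}^2\asymp\sum_n|a_n|^2 2^{-2\sigma\cdot 2^n}$, and then invoke Theorems \ref{sigma} and \ref{suff} to reduce the lemma to the two-sided estimate $I\asymp\sum_n 2^{-(\alpha+1)n}|a_n|^q$. The difference lies in how that estimate is obtained. The paper changes variables $r=2^{-\sigma}$, recognizes $I$ as comparable to $\int_0^1\bigl(\sum_n|a_n|^2r^{2^{n+1}}\bigr)^{q/2}(1-r)^{\alpha}\,dr$, and quotes the Mateljevi\'{c}--Pavlovi\'{c} theorem \cite[Theorem 6]{MP83} on the $L^p$-behaviour of power series with positive coefficients, which delivers both directions at once. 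You instead prove the estimate by hand, working directly against $d\mu_\alpha$: the lower bound by retaining a single term on each interval $[2^{-n-1},2^{-n}]$, and the upper bound by subadditivity when $q\le 2$, and when $q\ge 2$ by a dyadic block decomposition in which the off-diagonal terms are suppressed by the double-exponential factors $2^{-2^k}$ (your Jensen/H\"older absorption argument does work, since $\sum_k 2^{-2^k+k(\alpha+1)}<\infty$) and the partial sums $A_N$ are handled by the discrete Hardy-type inequality with geometric weight $2^{\mu(N-n)}$, where the constraint $0<\mu<\alpha+1$ is exactly what is needed and is available precisely because $\alpha>-1$. In effect you reprove the lacunary case of the Mateljevi\'{c}--Pavlovi\'{c} theorem; this buys a self-contained, elementary argument that makes explicit which features of the weight $\mu_\alpha$ are used, at the cost of length where the paper simply cites the literature. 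Two minor points to spell out when writing this up: before applying Theorem \ref{suff} in the sufficiency direction you must verify $f_\sigma\in\h^p$ for every $\sigma>0$, which follows because the hypothesis gives $|a_n|\lesssim 2^{(\alpha+1)n/q}$, so the inner sum converges for each fixed $\sigma$ (the paper records this step explicitly); and your tail integral over $(1,\infty)$ is bounded by a constant via the same observation, which yields finiteness of $I$ --- all the lemma requires --- though not a bound by $\sum_n 2^{-(\alpha+1)n}|a_n|^q$ itself.
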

\begin{proof}
Suppose first that $\left\{2^{-(\alpha+1)n/q}a_n\right\}\in l^q$. Then for any $\sigma>0$, $$\sum_{n=0}^{\infty}2^{-2^{n+1}\sigma}|a_n|^2<\infty,$$
which implies that
$$\mathscr{B}(f_{\sigma})(\xi)=\sum_{n=0}^{\infty}a_n2^{-2^n\sigma}\xi^{2^n},\quad \xi\in\DD$$
belongs to the Hardy space $H^2(\DD)$. Applying \cite[Theorem 6.2.2]{JVA}, $\mathscr{B}(f_{\sigma})$ belongs to the Hardy space $H^p(\DD)$, which in turn implies that $f_{\sigma}\in\h^p$ and $\|f_{\sigma}\|_{\h^p}\asymp\|f_{\sigma}\|_{\h^2}$. Therefore, the change of variables $r=2^{-\sigma}$ gives that
\begin{align*}
\int_0^{+\infty}\|f_{\sigma}\|^q_{\h^{p}}d\mu_{\alpha}(\sigma)
&\asymp\int_{0}^{+\infty}\|f_{\sigma}\|^q_{\h^2}\sigma^{\alpha}e^{-2\sigma}d\sigma\\
&=\int_0^{+\infty}\left(\sum_{n=0}^{\infty}|a_n|^22^{-2^{n+1}\sigma}\right)^{q/2}\sigma^{\alpha}e^{-2\sigma}d\sigma\\
&\asymp\int_0^1\left(\sum_{n=0}^{\infty}|a_n|^2r^{2^{n+1}}\right)^{q/2}r^{\frac{2}{\log2}-1}\log^{\alpha}\frac{1}{r}dr\\
&\asymp\int_0^1\left(\sum_{n=0}^{\infty}|a_n|^2r^{2^{n+1}}\right)^{q/2}(1-r)^{\alpha}dr.
\end{align*}
Applying \cite[Theorem 6]{MP83}, we establish that
\begin{equation}\label{lacu}
\int_0^{+\infty}\|f_{\sigma}\|^q_{\h^{p}}d\mu_{\alpha}(\sigma)\asymp\sum_{n=0}^{\infty}2^{-(\alpha+1)n}|a_n|^q.
\end{equation}
Since $\left\{2^{-(\alpha+1)n/q}a_n\right\}\in l^q$, in view of Theorem \ref{suff}, we obtain that $f\in\h^{p,q}_{\alpha}$.

Conversely, if $f\in\h^{p,q}_{\alpha}$,  Theorem \ref{sigma} yields that  $f_{\sigma}\in\h^p$ for all $\sigma>0$, and
$$\int_0^{+\infty}\|f_{\sigma}\|^q_{\h^p}d\mu_{\alpha}(\sigma)<\infty.$$
Hence \eqref{lacu} still holds, and consequently, $\left\{2^{-(\alpha+1)n/q}a_n\right\}\in l^q$.
\end{proof}

We are now ready to prove Theorem \ref{em}.

\begin{proof}[\bf{Proof of Theorem \ref{em}}]
We first consider the sufficient part. Suppose $p\geq u$ and $f\in\h^{p,q}_{\alpha}$. Then Theorem \ref{sigma} yields that $f_{\sigma}\in\h^p\subset\h^u$ for any $\sigma>0$. In view of Theorem \ref{suff}, it is sufficient to show that
$$\int_{0}^{+\infty}\|f_{\sigma}\|^v_{\h^u}d\mu_{\beta}(\sigma)<\infty$$
under the assumptions (i) or (ii).

If (i) holds, then we choose $0<\kappa<\min\left\{1,q/v\right\}$ and make use of  Lemma \ref{mean-growth} to obtain that
\begin{align*}
\int_{0}^{+\infty}\|f_{\sigma}\|^v_{\h^u}d\mu_{\beta}(\sigma)
&\lesssim\int_{0}^{+\infty}\|f_{\sigma}\|^v_{\h^p}\sigma^{\beta}e^{-2\sigma}d\sigma\\
&\lesssim\kappa^{-\frac{\alpha+1}{q}v}\|f\|^v_{\h^{p,q}_{\alpha}}\int_{0}^{+\infty}\sigma^{\beta-\frac{\alpha+1}{q}v}
e^{-2\sigma(1-\frac{\kappa v}{q})}d\sigma.
\end{align*}
Since $\beta-\frac{\alpha+1}{q}v>-1$ and $1-\frac{\kappa v}{q}>0$, we deduce that $\int_{0}^{+\infty}\sigma^{\beta-\frac{\alpha+1}{q}v}
e^{-2\sigma(1-\frac{\kappa v}{q})}d\sigma<\infty$, which implies $f\in\h^{u,v}_{\beta}$.

If (ii) holds, then we choose $\kappa\in(0,1)$ small enough so that $\gamma:=1-\kappa(v/q-1)>0$. Noting that $\gamma\leq1$, Lemma \ref{mean-growth} together with Lemma \ref{decreasing} yields that
\begin{align*}
&\int_{0}^{+\infty}\|f_{\sigma}\|^v_{\h^u}d\mu_{\beta}(\sigma)\\
&\quad\lesssim\int_{0}^{+\infty}\|f_{\sigma}\|^{v-q}_{\h^p}\|f_{\sigma}\|^{q}_{\h^p}\sigma^{\beta}e^{-2\sigma}d\sigma\\
&\quad\lesssim\kappa^{-\frac{\alpha+1}{q}(v-q)}\|f\|^{v-q}_{\h^{p,q}_{\alpha}}
    \int_0^{+\infty}\|f_{\sigma}\|^q_{\h^p}\sigma^{\beta-\frac{\alpha+1}{q}(v-q)}e^{-2\sigma+\frac{2\kappa\sigma}{q}(v-q)}d\sigma\\
&\quad=\kappa^{-\frac{\alpha+1}{q}(v-q)}\|f\|^{v-q}_{\h^{p,q}_{\alpha}}
    \int_0^{+\infty}\|f_{\sigma}\|^q_{\h^p}\sigma^{\alpha}e^{-2\gamma\sigma}d\sigma\\
&\quad=\gamma^{-(\alpha+1)}\kappa^{-\frac{\alpha+1}{q}(v-q)}\|f\|^{v-q}_{\h^{p,q}_{\alpha}}
    \int_0^{+\infty}\|f_{\frac{\sigma}{\gamma}}\|^q_{\h^p}\sigma^{\alpha}e^{-2\sigma}d\sigma\\
&\quad\leq\gamma^{-(\alpha+1)}\kappa^{-\frac{\alpha+1}{q}(v-q)}\|f\|^{v-q}_{\h^{p,q}_{\alpha}}
    \int_0^{+\infty}\|f_{\sigma}\|^q_{\h^p}\sigma^{\alpha}e^{-2\sigma}d\sigma\\
&\quad\lesssim\gamma^{-(\alpha+1)}\kappa^{-\frac{\alpha+1}{q}(v-q)}\|f\|^{v}_{\h^{p,q}_{\alpha}}.
\end{align*}
Therefore, $f\in\h^{u,v}_{\beta}$ as well.

We next establish the necessary part. Assume neither (i) nor (ii) holds. We are going to construct some $f\in\D$ so that $f\in\h^{p,q}_{\alpha}$ but $f\notin\h^{u,v}_{\beta}$.

In the case that $p\geq u$ and $\frac{\alpha+1}{q}>\frac{\beta+1}{v}$, we consider the function $f_1(s)=\sum_{n=0}^{\infty}a_n2^{-2^ns}$ with $a_n=2^{(\beta+1)n/v}$. Since
$$\left\{2^{-\frac{(\alpha+1)n}{q}}a_n\right\}=\left\{2^{-\left(\frac{\alpha+1}{q}-\frac{\beta+1}{v}\right)n}\right\}\in l^q
\quad\text{and}\quad
\left\{2^{-\frac{(\beta+1)n}{v}}a_n\right\}=\left\{1\right\}\notin l^v,$$
it follows from Lemma \ref{lacunary} that $f_1\in\h^{p,q}_{\alpha}\setminus\h^{u,v}_{\beta}$.

In the case that $p\geq u$, $\frac{\alpha+1}{q}=\frac{\beta+1}{v}$ and $q>v$, we consider the function $f_2(s)=\sum_{n=0}^{\infty}a_n2^{-2^ns}$ with $a_n=2^{(\alpha+1)n/q}n^{-1/v}$. Since
$$\left\{2^{-\frac{(\alpha+1)n}{q}}a_n\right\}=\left\{n^{-\frac{1}{v}}\right\}\in l^q
\quad\text{and}\quad
\left\{2^{-\frac{(\beta+1)n}{v}}a_n\right\}=\left\{n^{-\frac{1}{v}}\right\}\notin l^v,$$
it follows again from Lemma \ref{lacunary} that $f_2\in\h^{p,q}_{\alpha}\setminus\h^{u,v}_{\beta}$.

Suppose finally that $p<u$. Fix $1/u<\eta<1/p$ and write
$k=\left[\frac{u(\beta+1)}{(u\eta-1)v}\right]+1.$
Define
$$f_3(s)=\prod_{j=1}^k\frac{1}{(1-p_j^{-s})^{\eta}},\quad s\in\C_0.$$
Since $p\eta<1$, we know that $f_3\in\h^p\subset\h^{p,q}_{\alpha}$. For any $\sigma>0$, the Forelli--Rudin estimate (see \cite[Proposition 1.4.10]{Ru80}) yields that
\begin{align*}
\|(f_3)_{\sigma}\|^u_{\h^u}=\|\mathscr{B}( (f_3)_{\sigma} )\|^u_{H^u(\TT)}&=\prod_{j=1}^k\int_{\T}\frac{1}{|1-p^{-\sigma}_jz_j|^{u\eta}}dm(z_j)\\
&\asymp\prod_{j=1}^k(1-p_j^{-2\sigma})^{1-u\eta}\\
&\geq(1-p_k^{-2\sigma})^{k(1-u\eta)},
\end{align*}
which implies that
\begin{align*}
\int_0^{+\infty}\|(f_3)_{\sigma}\|^v_{\h^u}d\mu_{\beta}(\sigma)
&\gtrsim\int_0^1\|(f_3)_{\sigma}\|^v_{\h^u}\sigma^{\beta}d\sigma\\
&\gtrsim\int_0^1(1-p_k^{-2\sigma})^{k(v/u-v\eta)}\sigma^{\beta}d\sigma\\
&\asymp\int_0^1\sigma^{\beta+kv(1/u-\eta)}d\sigma=\infty
\end{align*}
since $\beta+kv(1/u-\eta)<-1$. Therefore, $f_3\notin \h^{u,v}_{\beta}$. The proof is complete.
\end{proof}

Combining Theorem \ref{em} with Corollary \ref{sym}, we obtain the following Littlewood-type theorem for mixed norm spaces of Dirichlet series, which covers Theorem \ref{r-embedding}.

\begin{corollary}
Let $0<p,q,u,v<\infty$, $\alpha,\beta>-1$, and let $\{X_n\}$ be a standard random sequence. Then, $\R f\in\h^{u,v}_{\beta}$ almost surely for each $f\in\h^{p,q}_{\alpha}$ if and only if
\begin{enumerate}
	\item [(i)] $p\geq2$ and $\frac{\alpha+1}{q}<\frac{\beta+1}{v}$; or
	\item [(ii)] $p\geq2$, $\frac{\alpha+1}{q}=\frac{\beta+1}{v}$ and $q\leq v$.
\end{enumerate}
\end{corollary}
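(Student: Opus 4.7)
The plan is to reduce the probabilistic statement to a deterministic inclusion between mixed norm spaces and then invoke the two main technical results already in hand. First I would observe that, by the very definition of the symbol space, the assertion ``$\R f\in\h^{u,v}_{\beta}$ almost surely for every $f\in\h^{p,q}_{\alpha}$'' is exactly the set-theoretic inclusion $\h^{p,q}_{\alpha}\subset(\h^{u,v}_{\beta})_{\star}$. (Since $\h^{u,v}_{\beta}$ is a (quasi-)Banach space of Dirichlet series containing all Dirichlet polynomials, the Hewitt--Savage zero-one law guarantees that $(\h^{u,v}_{\beta})_{\star}$ is well defined.)

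Next, I would apply Corollary \ref{sym} with parameters $(u,v,\beta)$ to identify $(\h^{u,v}_{\beta})_{\star}=\h^{2,v}_{\beta}$. This collapses the probabilistic question into the purely deterministic one of characterizing when $\h^{p,q}_{\alpha}\subset\h^{2,v}_{\beta}$. Finally, I would invoke Theorem \ref{em} with its $u$-parameter specialized to $2$: the theorem says this inclusion holds if and only if $p\geq 2$ together with either $\frac{\alpha+1}{q}<\frac{\beta+1}{v}$, or $\frac{\alpha+1}{q}=\frac{\beta+1}{v}$ and $q\leq v$. These are verbatim conditions (i) and (ii), and this closes the argument in both directions.

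There is essentially no obstacle at this stage, since all the genuine work has been absorbed into the two invoked results: Corollary \ref{sym} rests on the Marcinkiewicz--Zygmund--Kahane theorem, the Fernique theorem, and the Khintchine--Kahane inequality for $r$-Banach spaces, while Theorem \ref{em} combines the mean-growth bound of Lemma \ref{mean-growth} (for sufficiency) with lacunary test series and Forelli--Rudin estimates (for necessity). The only cosmetic point to verify is that Corollary \ref{sym} is stated for $0<q\leq\infty$, which comfortably covers our range $0<v<\infty$, so no extra case analysis is needed.
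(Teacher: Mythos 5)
Your proposal is correct and follows exactly the paper's own route: the paper obtains this corollary precisely by combining Corollary \ref{sym} (which identifies $(\h^{u,v}_{\beta})_{\star}=\h^{2,v}_{\beta}$) with Theorem \ref{em} specialized to $u=2$. The reduction of the almost-sure statement to the deterministic inclusion $\h^{p,q}_{\alpha}\subset\h^{2,v}_{\beta}$ and the verbatim match of the resulting conditions are exactly as intended, so nothing is missing.
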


It is also interesting to consider the random embedding between Hardy and Bergman spaces of Dirichlet series. To this end, we need the following description for the inclusion between Hardy spaces and mixed norm spaces of Dirichlet series.

\begin{proposition}\label{hardy}
Let $0<p,u,v<\infty$ and $\alpha>-1$. Then
\begin{enumerate}
	\item $\h^p\subset\h^{u,v}_{\alpha}$ if and only if $p\geq u$;
	\item $\h^{u,v}_{\alpha}\nsubseteq\h^p$.
\end{enumerate}
\end{proposition}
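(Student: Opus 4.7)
My plan is to handle the two parts separately, in each case leveraging either test functions already constructed in the paper or a single new one. For the sufficiency in (1), I will compose the two standing embeddings $\h^p \hookrightarrow \h^u$ (contractive when $p \geq u$, as noted after the definition of the Hardy spaces of Dirichlet series) and $\h^u \hookrightarrow \h^{u,v}_{\alpha}$ from Lemma \ref{con-sigma}(1); chaining these in fact yields the contractive embedding $\|f\|_{\h^{u,v}_{\alpha}} \leq \|f\|_{\h^p}$.

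For the necessity in (1), so I assume $p < u$, I will recycle the third test function from the final paragraph of the proof of Theorem \ref{em}: with $1/u < \eta < 1/p$ and $k := \lfloor u(\alpha+1)/((u\eta - 1)v)\rfloor + 1$, the Dirichlet series $f_3(s) := \prod_{j=1}^{k}(1-p_j^{-s})^{-\eta}$ lies in $\h^p$ (since $p\eta < 1$), while the Forelli--Rudin lower bound $\|(f_3)_{\sigma}\|_{\h^u}^{u} \gtrsim (1 - p_k^{-2\sigma})^{k(1-u\eta)}$ derived there, together with the choice of $k$, forces the defining integral of $\|f_3\|_{\h^{u,v}_{\alpha}}$ to diverge near $\sigma = 0$. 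Specializing $\beta := \alpha$ in that computation thus delivers $f_3 \in \h^p \setminus \h^{u,v}_{\alpha}$.

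For (2), I will exhibit a single lacunary Dirichlet series lying in every $\h^{u,v}_{\alpha}$ but in no $\h^p$. Take $f(s) := \sum_{n=0}^{\infty} 2^{-2^n s}$, which belongs to $\D$ since it converges absolutely on $\C_0$. Its Bohr lift is the one-variable Hadamard-gap series $\mathscr{B}f(z) = \sum_{n \geq 0} z_1^{2^n}$, so integrating out the remaining coordinates against $m_{\infty}$ gives $\|f\|_{\h^p}^p = \int_{\T} \bigl|\sum_{n \geq 0} \xi^{2^n}\bigr|^p \, dm(\xi)$ for every $p > 0$. By the classical Paley theorem on Hadamard-gap series, $\sum \xi^{2^n}$ lies in $H^p(\DD)$ for some (equivalently, every) $p > 0$ if and only if its coefficient sequence belongs to $l^2$; since these coefficients are all equal to $1$, we obtain $f \notin \h^p$ for any $p > 0$. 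On the other hand, Lemma \ref{lacunary} with the constant coefficient sequence gives $f \in \h^{u,v}_{\alpha}$, because $\{2^{-(\alpha+1)n/v}\}_{n \geq 0} \in l^v$ for every $\alpha > -1$ and $v > 0$.

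No single step promises real difficulty, since Lemmas \ref{lacunary} and \ref{con-sigma} and the construction of $f_3$ inside the proof of Theorem \ref{em} carry all the weight; the only conceptual point worth flagging is that a Dirichlet series supported on the frequencies $2^{-2^n s}$ lifts to a function of just the single Bohr variable $z_1$, which is precisely what reduces $\h^p$-membership to the one-variable Paley theorem on the disk.
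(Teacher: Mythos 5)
Your proof is correct and follows essentially the same route as the paper: part (1) via Lemma \ref{con-sigma} and the test function $f_3$ from the proof of Theorem \ref{em} (with $\beta=\alpha$), and part (2) via the lacunary series $\sum_{n\geq 0}2^{-2^n s}$, Lemma \ref{lacunary}, and the Hadamard-gap (Paley) theorem, which is precisely the result \cite[Theorem 6.2.2]{JVA} that the paper cites. The only cosmetic difference is that you phrase the gap-series step through the Bohr lift to a single variable rather than citing that reference directly.
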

\begin{proof}
(1) If $p\geq u$, then  Lemma \ref{con-sigma} gives that $\h^p\subset\h^u\subset\h^{u,v}_{\alpha}$. If $p<u$, then the proof of Theorem \ref{em} indicates that there exists $f\in\h^p\setminus\h^{u,v}_{\alpha}$.

(2) Consider the function $f(s)=\sum_{n=0}^{\infty}2^{-2^ns},$ then  Lemma \ref{lacunary} yields that $f\in\h^{u,v}_{\alpha}$. However, it is clear that $f\notin\h^2$, which together with \cite[Theorem 6.2.2]{JVA} implies $f\notin\h^p$.
\end{proof}

Combining Proposition \ref{hardy} with Corollary \ref{sym}, we can obtain the following Corollary \ref{Rmix0}, which covers Theorem \ref{Rmix}. Furthermore,   the answer to $\R:\A^p_{\alpha}\hookrightarrow\h^q$ is trivial since  $\A^p_{\alpha}\setminus\h^2\neq\emptyset$ for any $0<p<\infty$ due to Proposition \ref{hardy}.

\begin{corollary}\label{Rmix0}
 Assume $0<p,u,v<\infty$ and  $\alpha>-1.$ Let $\{X_n\}$ be a standard random sequence. Then, $\R f\in\h^{u,v}_{\alpha}$ almost surely for each $f\in\h^p$ if and only if $p\geq2$.
\end{corollary}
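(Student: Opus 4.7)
The plan is to reduce the random embedding question to a deterministic inclusion question between a Hardy space and a mixed norm space, and then invoke the two ingredients already established in the paper: the identification of the symbol space $(\h^{u,v}_{\alpha})_{\star}$ from Corollary~\ref{sym}, and the description of when $\h^p$ embeds into $\h^{u,v}_{\alpha}$ from Proposition~\ref{hardy}(1).

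More concretely, I would first observe that, by the very definition of the symbol space, the condition ``$\R f \in \h^{u,v}_{\alpha}$ almost surely for every $f \in \h^p$'' is equivalent to the deterministic inclusion $\h^p \subset (\h^{u,v}_{\alpha})_{\star}$. Corollary~\ref{sym} then identifies $(\h^{u,v}_{\alpha})_{\star}$ with $\h^{2,v}_{\alpha}$, so the random embedding is equivalent to $\h^p \subset \h^{2,v}_{\alpha}$. Applying Proposition~\ref{hardy}(1) with $u$ replaced by $2$, this inclusion holds if and only if $p \geq 2$, which is exactly the desired equivalence.

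There is essentially no obstacle here; the entire argument is a two-line bookkeeping exercise once Corollary~\ref{sym} and Proposition~\ref{hardy}(1) are in hand. The only thing worth being careful about is that Corollary~\ref{sym} is stated for parameters in the range $0<p<\infty$, $0<q\leq\infty$, $\alpha>-1$, which indeed covers $(\h^{u,v}_{\alpha})_{\star}$ for our range of $u,v,\alpha$, and that Proposition~\ref{hardy}(1) applies uniformly in $v$ and $\alpha$ (the criterion depends only on the comparison $p \geq u$). Both of these points are immediate from the statements, so no additional work is required and the corollary falls out directly.
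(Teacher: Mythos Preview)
Your proposal is correct and matches the paper's own argument exactly: the paper derives Corollary~\ref{Rmix0} by combining Corollary~\ref{sym} (which gives $(\h^{u,v}_{\alpha})_{\star}=\h^{2,v}_{\alpha}$) with Proposition~\ref{hardy}(1) (which gives $\h^p\subset\h^{2,v}_{\alpha}$ iff $p\geq 2$). No additional ideas are needed.
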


\section{Superposition operators}\label{super}

In this section, we are going to characterize the superposition operators between mixed norm spaces of Dirichlet series. To this end, we need the following lemma, which gives a growth estimate for the Dirichlet series in $\h^{p,q}_{\alpha}$ that are supported on a single prime.

\begin{lemma}\label{grow}
Let $0<p<\infty$, $0<q\leq\infty$ and $\alpha>-1$. Suppose that $f(s)=\sum_{n=0}^{\infty}a_n2^{-ns}$ is an element in $\h^{p,q}_{\alpha}$. Then $f$ is analytic on $\C_0$, and for any $s\in\C_0$ with $\Re s<1$,
$$|f(s)|\lesssim\left(\Re s\right)^{-\frac{1}{p}-\frac{\alpha+1}{q}}\|f\|_{\h^{p,q}_{\alpha}}.$$
\end{lemma}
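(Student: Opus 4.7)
The plan is to exploit the fact that a Dirichlet series supported on powers of the single prime $2$ corresponds, via the Bohr lift, to a classical analytic function of one complex variable on the disk. Setting $g(w):=\sum_{n=0}^{\infty}a_nw^n$, the substitution $w=2^{-s}$ gives $f(s)=g(2^{-s})$, and since $\mathscr{B}(f_\sigma)(z)=g(2^{-\sigma}z_1)$ depends only on the first Bohr coordinate, one has
$$\|f_\sigma\|_{\h^p}=\left(\int_{\T}\bigl|g(2^{-\sigma}z_1)\bigr|^p\,dm(z_1)\right)^{1/p}=M_p(g,2^{-\sigma}),\quad \sigma\geq 0,$$
where $M_p$ denotes the usual integral mean on the circle. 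By Theorem \ref{sigma} we have $f_\sigma\in\h^p$ for every $\sigma>0$, so $M_p(g,r)<\infty$ for every $r\in(0,1)$; this forces the power series $g$ to have radius of convergence at least $1$, and hence $f=g\circ 2^{-\cdot}$ is analytic on the half-plane $\C_0$, as claimed.

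For the quantitative bound I would combine the classical Hardy-space growth estimate on the disk with Lemma \ref{mean-growth}. Recall that for any analytic $h$ on $\DD$ with finite $H^p$-norm,
$$|h(w)|\lesssim(1-|w|)^{-1/p}\,\|h\|_{H^p(\DD)},\quad w\in\DD.$$
Applied to $h(w)=g(2^{-\sigma}w)$ at the point $w=2^{-(s-\sigma)}$ for any $0\leq\sigma<\Re s$, and using $1-2^{-t}\asymp t$ for $t\in(0,1]$, this yields
$$|f(s)|\lesssim(\Re s-\sigma)^{-1/p}\,\|f_\sigma\|_{\h^p}.$$
In the case $q=\infty$, where $\h^{p,\infty}_\alpha=\h^p$ by convention, taking $\sigma=0$ already gives the desired estimate directly.

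For $q<\infty$, I would then choose $\sigma=\Re s/2\in(0,1/2)$ and invoke Lemma \ref{mean-growth} with $\kappa=1$ to obtain
$$\|f_{\Re s/2}\|_{\h^p}\lesssim(\Re s)^{-(\alpha+1)/q}\,\|f\|_{\h^{p,q}_\alpha},$$
the exponential factor $e^{2\kappa\sigma/q}$ being uniformly bounded since $\Re s<1$. Multiplying the two displayed inequalities produces the desired bound
$$|f(s)|\lesssim(\Re s)^{-1/p-(\alpha+1)/q}\,\|f\|_{\h^{p,q}_\alpha}.$$

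There is no substantive obstacle here; the only care point is the clean identification of the one-variable Bohr lift at the level of norms for every $0<p<\infty$, which in the range $p<1$ requires the density of Dirichlet polynomials together with the classical identification of $H^p(\DD)$ via $\|g\|_{H^p(\DD)}=\sup_{0<r<1}M_p(g,r)$. Once this reduction to the disk is in place, the argument is a short interpolation between the standard Hardy growth bound and the already-established Lemma \ref{mean-growth}.
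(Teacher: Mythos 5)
Your proof is correct and follows essentially the same route as the paper's: reduce to the one-variable disk setting via the Bohr lift, apply the classical pointwise $H^p(\DD)$ growth bound at the point $2^{-(s-\sigma)}$, and then trade $\|f_\sigma\|_{\h^p}$ for the mixed norm $\|f\|_{\h^{p,q}_{\alpha}}$. The only cosmetic difference is that you invoke Lemma \ref{mean-growth} at the single value $\sigma=\Re s/2$, whereas the paper integrates the $q$-th power of the same inequality over $\sigma\in[0,\Re s/2]$ against $d\mu_{\alpha}$ and uses $\mu_{\alpha}\left([0,\Re s/2]\right)\asymp(\Re s)^{\alpha+1}$ --- which is precisely the computation that proves Lemma \ref{mean-growth}, so the two arguments coincide in substance.
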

\begin{proof}
Since $f\in\h^{p,q}_{\alpha}$, Theorem \ref{sigma} (or Lemma \ref{decreasing}) implies that $f_{\sigma}\in\h^p$ for any $\sigma>0$, which is equivalent to $\mathscr{B}(f_{\sigma})$ belongs to the Hardy space $H^p(\DD)$. Hence $\mathscr{B}(f_{\sigma})$ is analytic on the unit disk $\DD$, and consequently, $f_{\sigma}(s)=\mathscr{B}(f_{\sigma})(2^{-s})$ is analytic on $\C_0$. Since $\sigma>0$ is arbitrary, we obtain that $f$ is analytic on $\C_0$.

Fix $s\in\C_0$ with $\Re s<1$. Then for any $0<\sigma<\Re s/2$, \cite[Theorem 9.1]{Zhu07} yields that
\begin{align*}
|f(s)|=|f_{\sigma}(s-\sigma)|
&=\left|\mathscr{B}(f_{\sigma})\left(2^{-(s-\sigma)}\right)\right|\\
&\leq\frac{1}{\left(1-2^{-(2\Re s-2\sigma)}\right)^{1/p}}\|\mathscr{B}(f_{\sigma})\|_{H^p(\DD)}\\
&\leq\frac{1}{\left(1-2^{-\Re s}\right)^{1/p}}\|f_{\sigma}\|_{\h^p}\\
&\asymp(\Re s)^{-1/p}\|f_{\sigma}\|_{\h^p}.
\end{align*}
In the case $q=\infty$, the estimate $|f(s)|\lesssim(\Re s)^{-1/p}\|f\|_{\h^p}$ follows directly. In the case $q<\infty$, taking the power of $q$, integrating with respect to $d\mu_{\alpha}(\sigma)$ on $[0,\Re s/2]$ and using Theorem \ref{sigma}, we can obtain the desired estimate.
\end{proof}

The following lemma establishes a necessary condition for the superposition operators $S_{\varphi}$ mapping $\h^{p,q}_{\alpha}$ into $\h^{u,v}_{\beta}$.

\begin{lemma}\label{poly}
Let $0<p,u<\infty$, $0<q,v\leq\infty$ and $\alpha,\beta>-1$. Suppose that $\varphi$ is a function on $\C$ such that the superposition operator $S_{\varphi}$ maps $\h^{p,q}_{\alpha}$ into $\h^{u,v}_{\beta}$. Then $\varphi$ is a polynomial.
\end{lemma}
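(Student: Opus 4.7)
The plan proceeds in two stages. The first is to show $\varphi$ is entire. For each $\lambda \in \C$, the Dirichlet polynomial $f_\lambda(s) = \lambda \cdot 2^{-s}$ lies in $\h^{p,q}_\alpha$, so by hypothesis $\varphi(\lambda \cdot 2^{-s}) \in \h^{u,v}_\beta$. Since this Dirichlet series is supported on the single prime $2$, Lemma \ref{grow} ensures it is analytic on $\C_0$. Via the Bohr lift $z = 2^{-s}$, it follows that $\varphi(\lambda z)$ is analytic on $\DD$, hence $\varphi$ is analytic on $\{|w|<|\lambda|\}$; letting $|\lambda|\to\infty$, $\varphi$ is entire on $\C$.

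For the second stage, I would fix $\eta>0$ with $p\eta<1$ and consider the ``blow-up'' test function $g(s)=(1-2^{-s})^{-\eta}$, which by a Forelli--Rudin estimate (as in the proof of Theorem \ref{em}, case $p<u$) lies in $\h^p\subset\h^{p,q}_\alpha$ via Lemma \ref{con-sigma}(1). For each $\mu\in\C$, $\mu g\in\h^{p,q}_\alpha$ and $\varphi(\mu g)\in\h^{u,v}_\beta$ is supported on the prime $2$. Applying Lemma \ref{grow} at $s=\delta>0$ real and small, and noting $g(\delta)\asymp\delta^{-\eta}$, writing $w=\mu g(\delta)$ one deduces
\[
|\varphi(w)|\;\leq\;C\,|w|^{c_2/\eta}\,|\mu|^{-c_2/\eta}\,\|\varphi(\mu g)\|_{\h^{u,v}_\beta}
\qquad(|w|\text{ large},\ \arg w=\arg\mu),
\]
with $c_2=1/u+(\beta+1)/v$. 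This yields polynomial growth of $\varphi$ of degree $c_2/\eta$ along each ray, but with a constant $N(\mu):=\|\varphi(\mu g)\|_{\h^{u,v}_\beta}$ depending on~$\mu$.

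The main obstacle is to upgrade this ray-by-ray growth estimate into a uniform polynomial bound on $\C$, which will force $\varphi$ to be a polynomial. I plan to proceed via Baire category: the sublevel sets $E_n:=\{\mu\in\C:N(\mu)\leq n\}$ cover $\C$ by hypothesis, and they are closed in $\C$ because if $\mu_k\to\mu$ in $E_n$, then $\varphi(\mu_k g(s))\to\varphi(\mu g(s))$ pointwise (by continuity of $\varphi$) and Fatou's lemma, combined with the continuity of point evaluations on $\h^{u,v}_\beta$ from Lemma \ref{growth}, gives $N(\mu)\leq\liminf N(\mu_k)\leq n$. By Baire some $E_{n_0}$ contains an open disk $B(\mu^*,r)\subset\C$; as $\mu$ ranges over this disk and $\delta\in(0,1)$, the image $w=\mu g(\delta)$ sweeps out a two-dimensional region whose rescalings $\{w/t:t>0\}$ cover a genuine open cone in $\C$ (and cover all of $\C\setminus\{0\}$ in the favorable case $|\mu^*|<r$). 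Combining the resulting conical polynomial estimate with the entireness of $\varphi$---either directly when the disk contains $0$, or by a Phragm\'en--Lindel\"of argument in the complementary sector when it does not---yields $|\varphi(w)|\lesssim(1+|w|)^{c_2/\eta}$ globally, and hence $\varphi$ is a polynomial by the classical Liouville theorem for polynomially bounded entire functions.

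The main technical difficulty, in my view, is the case in which the Baire-provided disk in $\C$ does not contain the origin: here the raw conical bound must be extended to a global one. I would handle this by either repeating the argument with a different family of single-prime test functions $(1-p_k^{-s})^{-\eta}$ to cover the missing directions, or by invoking a Phragm\'en--Lindel\"of-type principle to handle the residual sector given that $\varphi$ already has at most polynomial growth of order $c_2/\eta$ on a cone of positive opening.
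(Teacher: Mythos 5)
Your first stage is essentially the argument the paper cites (from \cite[Theorem 9]{BCGMS} and \cite[Lemma 5.1]{FGMS}), modulo two small gaps: the claim that $S_{\varphi}(\lambda 2^{-s})$ is supported on powers of $2$ requires the periodicity/uniqueness-of-coefficients argument, and the identity $\mathscr{B}h(z)=\varphi(\lambda z)$ obtained through the Bohr lift holds only on a \emph{punctured} disk, so every disk you produce misses the origin and one must also use recentered test functions $a+\lambda 2^{-s}$. The fatal problem is in the second stage, at the passage from the cone estimate to a global one. Your Baire step itself is sound (lower semicontinuity of $\mu\mapsto\|S_{\varphi}(\mu g)\|_{\h^{u,v}_{\beta}}$ via Fatou does make the sets $E_n$ closed), but it only yields $|\varphi(w)|\lesssim |w|^{c_2/\eta}$ on a truncated cone of some opening around $\arg\mu^*$, and no Phragm\'en--Lindel\"of principle can bridge the complementary sector: Phragm\'en--Lindel\"of needs an a priori growth hypothesis (order less than $\pi$ divided by the opening) \emph{throughout the sector where you want to propagate the bound}, and you have no control whatsoever on $\varphi$ off the cone. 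Polynomial growth on a cone is genuinely insufficient: $e^{-w}$ is entire, bounded on the half-plane $\Re w\geq 0$ (a cone of opening $\pi$), and is not a polynomial, and Mittag-Leffler-type entire functions are bounded outside arbitrarily thin sectors while growing superexponentially inside them. For the same reason the full family of ray-by-ray estimates (which you do have, for every $\mu$) cannot close the argument either; there are classical entire functions tending to $0$ along every ray that are unbounded. Your alternative fix, switching to test functions $(1-p_k^{-s})^{-\eta}$ on other primes, does not help: changing the prime changes nothing in the value plane, where the swept directions are still governed solely by $\arg\mu$.

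The missing idea, which is how the paper proceeds, is to never seek a global bound at all: argue by contradiction combined with a rotation trick (as in \cite[Proposition 3.1]{BV}). If the entire function $\varphi$ is not a polynomial of degree at most $N$, pick $\xi_n\to\infty$ with $|\varphi(\xi_n)|\geq\kappa|\xi_n|^{N+1}$; after passing to a subsequence and replacing $\varphi$ by $\varphi(e^{i\theta_0}\,\cdot\,)$ --- legitimate because $f\mapsto e^{i\theta_0}f$ is an isometry of $\h^{p,q}_{\alpha}$, so the superposition hypothesis is preserved --- one may assume $|\arg\xi_n|<\gamma\pi/4$. Then, with $g(s)=(1-2^{-s})^{-\gamma}$ and $\gamma$ chosen in a suitable window, the points $s_n=-\log_2\bigl(1-\xi_n^{-1/\gamma}\bigr)$ satisfy $g(s_n)=\xi_n$ exactly, the points $2^{-s_n}$ lie in a Stolz domain with vertex $1$, so $|1-2^{-s_n}|\leq c\,(1-2^{-\Re s_n})$, and Lemma \ref{grow} applied to $S_{\varphi}g\in\h^{u,v}_{\beta}$ gives
\begin{equation*}
\kappa|\xi_n|^{N+1}\leq|\varphi(g(s_n))|\lesssim(\Re s_n)^{-\frac{1}{u}-\frac{\beta+1}{v}}\lesssim|\xi_n|^{\left(\frac{1}{u}+\frac{\beta+1}{v}\right)/\gamma},
\end{equation*}
a contradiction. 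Thus one only ever needs to control $\varphi$ along a single sequence of bad points, which the rotation places inside the value-cone of one test function; this is exactly the step your cone-plus-Phragm\'en--Lindel\"of scheme cannot supply.
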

\begin{proof}
 Applying the same method as in the proof of \cite[Theorem 9]{BCGMS}, we can obtain that $\varphi$ is an entire function (see also \cite[Lemma 5.1]{FGMS}). Suppose now that $q,v<\infty$ and we will show that $\varphi$ is a polynomial of degree at most
$$N=\left[\frac{pq(u(\beta+1)+v)}{uv(p(\alpha+1)+q)}\right].$$
To this end, it suffices to show that
\begin{equation}\label{suffice}
\lim_{r\to\infty}\frac{M_{\infty}(\varphi,r)}{r^{N+1}}=0,
\end{equation}
where $M_{\infty}(\varphi,r)=\sup_{|\xi|=r}|\varphi(\xi)|$. We complete the proof by contradiction. Assume that \eqref{suffice} does not hold. Then there exist $\kappa>0$ and a sequence $\{\xi_n\}\subset\C$ with $|\xi_n|>1$ such that $|\xi_n|\to\infty$ and
\begin{equation}\label{big}
|\varphi(\xi_n)|\geq\kappa|\xi_n|^{N+1},\quad \forall \ n\geq1.
\end{equation}

Fix now
$$\max\left\{\frac{1}{N+1}\left(\frac{\beta+1}{v}+\frac{1}{u}\right),\frac{1}{p}\right\}<\gamma<\frac{\alpha+1}{q}+\frac{1}{p}$$
and define
$$g(s)=(1-2^{-s})^{-\gamma},\quad s\in\C_0.$$
Then in view of  \cite[Proposition 1.4.10]{Ru80}, for any $\sigma>0$,
$$\|g_{\sigma}\|^p_{\h^p}=\|\mathscr{B}(g_{\sigma})\|^p_{H^p(\DD)}=\int_{\T}\frac{dm(\xi)}{|1-2^{-\sigma}\xi|^{p\gamma}}\asymp
\left(1-2^{-\sigma}\right)^{1-p\gamma}.$$
Consequently,
$$\int_0^{+\infty}\|g_{\sigma}\|^q_{\h^p}d\mu_{\alpha}(\sigma)
\lesssim\int_0^1\sigma^{\alpha+\frac{q}{p}-q\gamma}d\sigma+\int_1^{+\infty}\sigma^{\alpha}e^{-2\sigma}d\sigma<\infty,$$
due to $\alpha+\frac{q}{p}-q\gamma>-1$. Hence Theorem \ref{suff} yields that $g\in\h^{p,q}_{\alpha}$.

Using the same argument as in the proof of \cite[Proposition 3.1]{BV}, we may assume that $|\arg \xi_n|<\frac{\gamma\pi}{4}$. For each $n\geq1$, let
$$s_n=-\log_2\left(1-\xi_n^{-1/\gamma}\right),$$
where the complex logarithm is chosen so that $0\leq\arg s_n<\frac{2\pi}{\log2}$. Since $|\xi_n|>1$ and $|\arg \xi_n|<\frac{\gamma\pi}{4}$ for each $n\geq1$, all the points $2^{-s_n}=1-\xi_n^{-1/\gamma}$ belong to a Stolz domain with vertex at $1$. Hence we can find some $c>0$ such that
\begin{equation}\label{Sto}
|1-2^{-s_n}|\leq c(1-2^{-\Re s_n}),\quad\forall \ n  \geq1.
\end{equation}

Since $g\in\h^{p,q}_{\alpha}$, $\varphi\circ g=S_{\varphi}g$ belongs to $\h^{u,v}_{\beta}$. Then  Lemma \ref{grow} together with \eqref{big} and \eqref{Sto} yields that
\begin{align*}
\kappa|\xi_n|^{N+1}\leq|\varphi(\xi_n)|&=|\varphi\circ g(s_n)|\\
&\lesssim(\Re s_n)^{-\frac{1}{u}-\frac{\beta+1}{v}}\\
&\asymp\left(1-2^{-\Re s_n}\right)^{-\frac{1}{u}-\frac{\beta+1}{v}}\\
&\lesssim|1-2^{-s_n}|^{-\frac{1}{u}-\frac{\beta+1}{v}}\\
&=|\xi_n|^{\left(\frac{1}{u}+\frac{\beta+1}{v}\right)/\gamma},
\end{align*}
which is a contradiction since $\frac{1}{u}+\frac{\beta+1}{v}<(N+1)\gamma$ and $|\xi_n|\to\infty$. Therefore, \eqref{suffice} holds and $\varphi$ is a polynomial of degree at most $N$. In the case $q=\infty$ or $v=\infty$, one can obtain the desired result by minor modifications of the above argument. We omit the details.
\end{proof}

For an integer $d\geq1$, we use $A_d$ to denote what Bohr called {\it der $d$te Abschnitt}, defined by
$$A_d\left(\sum_{n=1}^{\infty}a_nn^{-s}\right):=\sum_{n:P^+(n)\leq p_d}a_nn^{-s},$$
where $P^+(n)$ is the largest prime divisor of $n$. The following characterization for $\h^p$ was established in \cite[Theorem 2.1]{BBSS} and \cite[Proposition 2.3]{DG}.

\begin{lemma}\label{d-ab}
Let $0<p<\infty$ and $f\in\D$. Then $f\in\h^p$ if and only if
$$\sup_{d\geq1}\|A_df\|_{\h^p}<\infty.$$
Moreover, $\|f\|_{\h^p}=\sup_{d\geq1}\|A_df\|_{\h^p}.$
\end{lemma}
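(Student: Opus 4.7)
The strategy hinges on the Bohr lift, which identifies $\h^p$ with the closure of polynomials in $L^p(\TT)$ and turns $A_d$ into the restriction operator that substitutes $z_{d+1}=z_{d+2}=\cdots=0$ in the infinite-variable power series.

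I would first show $A_d$ is contractive on $\h^p$. For a Dirichlet polynomial $P$, $\mathscr{B}P$ is a polynomial in finitely many variables $z_1,\ldots,z_N$; being analytic separately in each variable, $|\mathscr{B}P|^p$ is subharmonic in each. Iterating the sub-mean-value inequality over the tori in $z_{d+1},\ldots,z_N$ gives
\[
|\mathscr{B}P(z_1,\ldots,z_d,0,\ldots,0)|^p\le\int_{\T^{N-d}}|\mathscr{B}P(z_1,\ldots,z_d,w_{d+1},\ldots,w_N)|^p\,dm^{N-d}(w),
\]
and integrating in $(z_1,\ldots,z_d)\in\T^d$ yields $\|A_dP\|_{\h^p}\le\|P\|_{\h^p}$. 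By density of $\mathcal{P}$ in $\h^p$, $A_d$ extends to a contraction that coincides with the expected coefficient projection. For $f\in\h^p$, approximating by polynomials $P_k\to f$ and using $A_dP_k=P_k$ for $d$ large together with the $r$-triangle inequality ($r=\min(p,1)$) then yields $A_df\to f$ in $\h^p$, proving the forward implication and the norm equality $\|f\|_{\h^p}=\sup_d\|A_df\|_{\h^p}$ whenever $f\in\h^p$.

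The main obstacle is the reverse direction. Assume $\sup_d\|A_df\|_{\h^p}=M<\infty$ for $f=\sum a_nn^{-s}\in\D$. Each coefficient is a continuous linear functional on the Dirichlet polynomials in $\h^p$ (via the growth estimate $|g(s)|\lesssim\zeta(2\Re s)^{1/p}\|g\|_{\h^p}$ on $\C_{1/2}$ together with the averaging formula for Dirichlet coefficients), so $|a_n|\lesssim n^\sigma M$ for any $\sigma>1/2$ and hence $\{|a_n|\}$ grows at most polynomially. Consequently, for $\sigma$ sufficiently large $f_\sigma$ converges absolutely and uniformly on $\overline{\C_0}$ and $(A_df)_\sigma\to f_\sigma$ uniformly there; translating to the Bohr lift gives uniform convergence on $\TT$, hence $L^p(\TT)$ convergence, and combined with $\|(A_df)_\sigma\|_{\h^p}\le M$ from Lemma~\ref{decreasing} this gives $f_\sigma\in\h^p$ with $\|f_\sigma\|_{\h^p}\le M$ for all such $\sigma$. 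To descend from large $\sigma$ to $\sigma\to 0^+$, I would invoke a Fatou-type argument on $L^p(\TT)$: the relation $\mathscr{B}f_\sigma(z)=\mathscr{B}f(p_1^{-\sigma}z_1,p_2^{-\sigma}z_2,\ldots)$ realizes $\mathscr{B}f$ as a polydisc boundary limit of the $L^p$-bounded family $\{\mathscr{B}f_\sigma\}$, and lower semicontinuity of the $L^p(\TT)$ (quasi-)norm under such limits places $\mathscr{B}f$ in $L^p(\TT)$ with norm $\le M$; translating back gives $f\in\h^p$ with $\|f\|_{\h^p}\le M$, matching the upper bound from the contractivity step.
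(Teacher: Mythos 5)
The paper itself gives no proof of this lemma: it is imported from \cite[Theorem 2.1]{BBSS} and \cite[Proposition 2.3]{DG}. So your argument can only be measured against the standard proofs in those references. Your forward direction is correct and is essentially the standard one: separate subharmonicity of $|\mathscr{B}P|^p$ gives contractivity of $A_d$ on $\h^p$ for every $0<p<\infty$, the extension by density agrees with the coefficient projection because coefficient functionals are $\h^p$-continuous, and $A_dP=P$ for large $d$ plus the $r$-triangle inequality gives $A_df\to f$ in $\h^p$ together with the norm identity.

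The reverse direction, however, has a genuine gap at its final and decisive step. Up to the point where you obtain $f_\sigma\in\h^p$ with $\|f_\sigma\|_{\h^p}\le M$ for all sufficiently large $\sigma$ (large enough that $\sum_n|a_n|n^{-\sigma}<\infty$), the argument is sound. But the concluding ``Fatou-type argument'' does not exist in this setting. At that stage $\mathscr{B}f$ is only a formal power series: it is not known to define a function on $\TT$, nor on any substantial part of the infinite polydisc, so the phrase ``realizes $\mathscr{B}f$ as a polydisc boundary limit of $\{\mathscr{B}f_\sigma\}$'' presupposes exactly what is to be proved. In finitely many variables, the step ``holomorphic with bounded $L^p$ means of dilates $\Rightarrow$ boundary function in $L^p$ with lower semicontinuous norm'' is the Fatou theorem for $H^p(\DD^d)$; on $\TT$ there is no Poisson kernel, no almost-everywhere convergence theory along the dilation $z\mapsto(p_1^{-\sigma}z_1,p_2^{-\sigma}z_2,\dots)$, and the existence of boundary values for elements of $\h^p$ is itself a well-known delicate problem, not a tool one may invoke. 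Concretely, lower semicontinuity of $\|\cdot\|_{L^p(\TT)}$ requires some mode of convergence (almost everywhere, in measure, or weak) of $\mathscr{B}f_\sigma$ to a limit function as $\sigma\to0^+$, and your hypotheses supply none; moreover you only control $\sigma$ above a large threshold, so even a ``bounded vertical translates'' characterization of $\h^p$ would not apply directly. For $p>1$ the gap can be repaired by reflexivity: $\{\mathscr{B}(A_df)\}_d$ is bounded in $L^p(\TT)$, a weak cluster point has Fourier coefficients $a_n$ supported on the analytic indices, and hence lies in the closure of the polynomials with norm at most $M$. For $p=1$ one needs in addition the Helson--Lowdenslager F.\ and M.\ Riesz theorem to rule out a singular part of the weak-star limit measure. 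For $p<1$, $L^p(\TT)$ has trivial dual, no compactness is available, and this case is precisely the substantive content of the cited results; your argument does not reach it.
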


Based on Lemma \ref{d-ab}, we have the following fundamental result.

\begin{proposition}\label{NN}
Let $0<p<\infty$, $f\in\D$, and let $N$ be a positive integer. Then $f^N\in\h^{p/N}$ if and only if $f\in\h^{p}$. Moreover,
$$\|f^N\|_{\h^{p/N}}=\|f\|^N_{\h^p}.$$
\end{proposition}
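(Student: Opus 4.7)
The plan is to reduce to Dirichlet series supported on the first $d$ primes via Lemma \ref{d-ab}, then pass to the finite polytorus $\T^d$ through the Bohr lift, where the identity collapses to the pointwise tautology $|F|^{p} = |F^N|^{p/N}$.

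First I would record the multiplicativity of the Bohr truncation: for every $f,g\in\D$,
$$A_d(fg)=A_d(f)\,A_d(g),$$
which follows at once from Dirichlet convolution together with the fact that $P^+(mn)\le p_d$ if and only if $P^+(m)\le p_d$ and $P^+(n)\le p_d$. Iterating gives $A_d(f^N)=(A_d f)^N$ for every $d\ge 1$. Next, for any Dirichlet series $g$ supported on $p_d$-smooth integers, the Bohr lift $\mathscr{B}g$ depends only on $(z_1,\dots,z_d)$; since Haar integration on $\TT$ of a function of the first $d$ variables reduces to Lebesgue integration on $\T^d$, the identity $\|g\|_{\h^r}=\|\mathscr{B}g\|_{H^r(\T^d)}$ holds for every $r\in(0,\infty)$ (finite or infinite simultaneously). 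The pointwise identity $|\mathscr{B}g^N|^{p/N}=|\mathscr{B}g|^{p}$ on $\T^d$, integrated, yields
$$\|(A_df)^N\|_{\h^{p/N}}=\|A_df\|_{\h^p}^{N},$$
with the convention $\infty^N=\infty$, for every $d\ge 1$.

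Finally I would assemble everything using Lemma \ref{d-ab} twice together with the monotonicity of $x\mapsto x^N$ on $[0,\infty]$:
$$\|f^N\|_{\h^{p/N}}=\sup_{d\ge 1}\|A_d(f^N)\|_{\h^{p/N}}=\sup_{d\ge 1}\|(A_df)^N\|_{\h^{p/N}}=\sup_{d\ge 1}\|A_df\|_{\h^p}^{N}=\|f\|_{\h^p}^{N}.$$
This chain simultaneously proves both directions of the equivalence and delivers the norm identity $\|f^N\|_{\h^{p/N}}=\|f\|_{\h^p}^N$.

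The main obstacle will be step two: rigorously identifying $\|A_df\|_{\h^p}$ with $\|\mathscr{B}(A_df)\|_{H^p(\T^d)}$ for a Dirichlet series $A_df$ that is not a priori known to lie in $\h^p$ (only in $\D$). I would handle this by viewing $\mathscr{B}(A_df)$ as a holomorphic function on a polydisc around the origin and comparing the sup of $L^p$-means on $r\T^d$ as $r\to1^{-}$ with the $\h^p$-quasinorm via Dirichlet polynomial truncations supported on $p_d$-smooth numbers; the matching upper and lower bounds follow from the monotonicity of Hardy means and the density of such polynomials in the $p_d$-smooth subspace, after which the $|F|^N=|F^N|$ argument on each fixed $r\T^d$ makes the whole scheme work.
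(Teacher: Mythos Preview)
Your approach is essentially the same as the paper's: reduce to $p_d$-smooth Dirichlet series via Lemma~\ref{d-ab}, pass through the Bohr lift to $H^r(\DD^d)$, and exploit the pointwise identity $|F|^{p}=|F^N|^{p/N}$. The paper treats the two directions separately---citing \cite[Theorem 9]{BCGMS} for $f\in\h^p\Rightarrow f^N\in\h^{p/N}$ and arguing the converse in detail---whereas you unify them into a single chain of equalities in $[0,\infty]$. The ``main obstacle'' you flag (making sense of $\|A_df\|_{\h^p}$ when $A_df$ is not a priori in $\h^p$) is exactly the technical point the paper works through: it first uses $(A_df)^N=A_d(f^N)\in\h^{p/N}$ to force $\mathscr{B}((A_df)^N)\in H^{p/N}(\DD^d)$, hence analyticity on $\DD^d$, and then deduces that $\mathscr{B}(A_df)$ itself is analytic on $\DD^d$, after which the $H^p$-identification is legitimate. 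Your sketch via radial $L^p$-means on $r\T^d$ is a valid alternative packaging of the same idea, but you should make explicit that $f\in\D$ only guarantees $\mathscr{B}(A_df)$ is holomorphic on some smaller polydisc, and that the extension to all of $\DD^d$ is what the hypothesis $f^N\in\h^{p/N}$ buys you.
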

\begin{proof}
If $f\in\h^p$, then it follows from \cite[Theorem 9]{BCGMS} that $f^N\in\h^{p/N}$ with $\|f^N\|_{\h^{p/N}}\leq\|f\|^N_{\h^p}$. Assume now that $f^N\in\h^{p/N}$. Then by Lemma \ref{d-ab}, $(A_df)^N=A_d(f^N)\in\h^{p/N}$ for any positive integer $d$, and $\|(A_df)^N\|_{\h^{p/N}}\leq\|f^N\|_{\h^{p/N}}$. Since the Dirichlet series $(A_df)^N$ depends only on the first $d$ primes, its Bohr lift $\mathscr{B}\big((A_df)^N\big)$ belongs to the Hardy space $H^{p/N}(\DD^d)$ over the polydisk $\DD^d$, which implies that the power series  $\mathscr{B}\big((A_df)^N\big)$ converges absolutely and uniformly on each compact subset of $\DD^d$. Consequently, $$(A_df)^N(s)=\mathscr{B}\big((A_df)^N\big)(2^{-s},3^{-s},\cdots,p_d^{-s})$$
converges absolutely and uniformly on $\C_{\epsilon}$ for any $\epsilon>0$. Therefore, $A_df$ converges absolutely and uniformly on $\C_{\epsilon}$ for any $\epsilon>0$, and then $\mathscr{B}(A_df)$ is analytic on $\DD^d$. Comparing the coefficients, we deduce that
$$\big(\mathscr{B}(A_df)\big)^N=\mathscr{B}\big((A_df)^N\big)\in H^{p/N}(\DD^d).$$
Therefore, $\mathscr{B}(A_df)\in H^p(\DD^d)$, and
\begin{align*}
\left\|\mathscr{B}(A_df)\right\|_{H^p(\DD^d)}&=\left\|\big(\mathscr{B}(A_df)\big)^N\right\|^{1/N}_{H^{p/N}(\DD^d)}
=\left\|\mathscr{B}\big((A_df)^N\big)\right\|^{1/N}_{H^{p/N}(\DD^d)}\\
&=\left\|(A_df)^N\right\|^{1/N}_{\h^{p/N}}\leq\|f^N\|^{1/N}_{\h^{p/N}}.
\end{align*}
Hence $A_df\in\h^{p}$ and $\|A_df\|_{\h^{p}}\leq\|f^N\|^{1/N}_{\h^{p/N}}$. Since $d\geq1$ is arbitrary, using Lemma \ref{d-ab} again, we obtain that $f\in\h^p$, and $\|f\|^N_{\h^p}\leq\|f^N\|_{\h^{p/N}}$.
\end{proof}

As a direct consequence of Proposition \ref{NN}, Theorems \ref{sigma} and \ref{suff}, we have the following result concerning powers of Dirichlet series in the spaces $\h^{p,q}_{\alpha}$.

\begin{corollary}\label{N-power}
Let $0<p,q<\infty$, $\alpha>0$, and let $N$ be a positive integer. Then for any $f\in\D$, $f\in\h^{p,q}_{\alpha}$ if and only if $f^N\in\h^{\frac{p}{N},\frac{q}{N}}_{\alpha}$. Moreover,
$$\|f^N\|_{\h^{\frac{p}{N},\frac{q}{N}}_{\alpha}}=\|f\|^N_{\h^{p,q}_{\alpha}}.$$
\end{corollary}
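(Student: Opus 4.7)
The plan is to reduce the corollary directly to Proposition \ref{NN} via the integral characterizations provided by Theorems \ref{sigma} and \ref{suff}. The decisive observation is the elementary commutation identity
$$(f^N)_{\sigma} = (f_{\sigma})^N, \qquad \sigma>0,$$
which holds for any Dirichlet series in $\D$ simply because the horizontal translation $T_\sigma$ is an algebra homomorphism on $\D$ (both sides equal $f(s+\sigma)^N$).

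Assume first that $f \in \h^{p,q}_\alpha$. By Theorem \ref{sigma}, $f_\sigma \in \h^p$ for every $\sigma > 0$ and $\|f\|^q_{\h^{p,q}_\alpha} = \int_0^{+\infty}\|f_\sigma\|^q_{\h^p}\, d\mu_\alpha(\sigma)$. Applying Proposition \ref{NN} to $f_\sigma$ yields $(f_\sigma)^N \in \h^{p/N}$ with $\|(f_\sigma)^N\|_{\h^{p/N}} = \|f_\sigma\|^N_{\h^p}$, and by the commutation identity this is the same as $(f^N)_\sigma \in \h^{p/N}$ with $\|(f^N)_\sigma\|_{\h^{p/N}} = \|f_\sigma\|^N_{\h^p}$. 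Raising to the power $q/N$ and integrating against $d\mu_\alpha$,
$$\int_0^{+\infty}\|(f^N)_\sigma\|^{q/N}_{\h^{p/N}}\, d\mu_\alpha(\sigma)
= \int_0^{+\infty}\|f_\sigma\|^q_{\h^p}\, d\mu_\alpha(\sigma) = \|f\|^q_{\h^{p,q}_\alpha} < \infty.$$
Theorem \ref{suff} then delivers $f^N \in \h^{p/N,q/N}_\alpha$, while Theorem \ref{sigma} computes its norm and furnishes $\|f^N\|_{\h^{p/N,q/N}_\alpha} = \|f\|^N_{\h^{p,q}_\alpha}$.

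For the converse, assume $f^N \in \h^{p/N,q/N}_\alpha$. By Theorem \ref{sigma}, $(f^N)_\sigma = (f_\sigma)^N \in \h^{p/N}$ for every $\sigma > 0$, so the reverse direction of Proposition \ref{NN} gives $f_\sigma \in \h^p$ with $\|f_\sigma\|^N_{\h^p} = \|(f^N)_\sigma\|_{\h^{p/N}}$. The same integration then yields $\int_0^{+\infty}\|f_\sigma\|^q_{\h^p}\, d\mu_\alpha(\sigma) = \|f^N\|^{q/N}_{\h^{p/N,q/N}_\alpha} < \infty$, and Theorem \ref{suff} concludes that $f \in \h^{p,q}_\alpha$.

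No genuine obstacle is anticipated: the whole argument is a straightforward packaging of Proposition \ref{NN} through the integral formulas for the mixed-norm quasi-norm, relying only on the commutation of $T_\sigma$ with pointwise multiplication. The only point worth a second glance is that we must invoke Theorem \ref{suff} rather than just Theorem \ref{sigma} in order to \emph{produce} membership in $\h^{p,q}_\alpha$ (or $\h^{p/N,q/N}_\alpha$) from the finiteness of the integral, which is precisely what Theorem \ref{suff} was designed for.
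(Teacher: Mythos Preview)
Your proof is correct and follows exactly the route the paper indicates: the paper states the corollary as ``a direct consequence of Proposition \ref{NN}, Theorems \ref{sigma} and \ref{suff}'', and your argument spells out precisely this reduction via the commutation $(f^N)_\sigma=(f_\sigma)^N$.
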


We can now characterize the monomials $\varphi$ that induce superposition operators $S_{\varphi}$ mapping $\h^{p,q}_{\alpha}$ into $\h^{u,v}_{\beta}$.

\begin{proposition}\label{mo}
Let $0<p,q,u,v<\infty$ and $\alpha,\beta>-1$. Suppose that $N$ is a non-negative integer and $\varphi(z)=z^N$. Then $S_{\varphi}$ maps $\h^{p,q}_{\alpha}$ into $\h^{u,v}_{\beta}$ if and only if
\begin{enumerate}[(i)]
	\item $N\leq\min\left\{\frac{p}{u},\frac{q(\beta+1)}{v(\alpha+1)}\right\}$ in the case $\alpha\leq\beta$;
	\item $N\leq\frac{p}{u}$ and $N<\frac{q(\beta+1)}{v(\alpha+1)}$ in the case $\alpha>\beta$.
\end{enumerate}
Moreover, if (i) or (ii) holds, then $S_{\varphi}:\h^{p,q}_{\alpha}\to\h^{u,v}_{\beta}$ is bounded.
\end{proposition}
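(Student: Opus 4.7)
The plan is to reduce the superposition question for the monomial $\varphi(z)=z^N$ directly to an inclusion problem between mixed norm spaces, and then invoke Theorem \ref{em}. If $N=0$, then $\varphi\equiv 1$, so $S_\varphi f \equiv 1$ lies in every $\h^{u,v}_\beta$ and the condition is vacuous (both (i) and (ii) are automatic). For $N\geq 1$, Corollary \ref{N-power} gives the exact identification
\[
f\in\h^{p,q}_\alpha \iff f^N\in\h^{p/N,q/N}_\alpha,\qquad \|f^N\|_{\h^{p/N,q/N}_\alpha}=\|f\|^N_{\h^{p,q}_\alpha}.
\]
Consequently, $S_\varphi$ maps $\h^{p,q}_\alpha$ into $\h^{u,v}_\beta$ if and only if $\h^{p/N,q/N}_\alpha\subset\h^{u,v}_\beta$.

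Next I would apply Theorem \ref{em} with the six-tuple of parameters $(p/N,q/N,\alpha;u,v,\beta)$. The two resulting conditions read: either $p/N\geq u$ together with $N(\alpha+1)/q<(\beta+1)/v$, or $p/N\geq u$ together with $N(\alpha+1)/q=(\beta+1)/v$ and $q/N\leq v$. The first inequality in each case simplifies to $N\leq p/u$, and the second to $N\leq \tfrac{q(\beta+1)}{v(\alpha+1)}$ (strict in the first case, equality in the second). The main care is in the equality case: observing that $N=\tfrac{q(\beta+1)}{v(\alpha+1)}$ automatically implies $N\geq q/v$ exactly when $(\beta+1)/(\alpha+1)\geq 1$, i.e.\ when $\alpha\leq\beta$. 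This is the one nonobvious bookkeeping step and will be the most delicate part of the argument.

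Combining the two cases yields precisely the dichotomy stated in the proposition: when $\alpha\leq\beta$, equality in $N\leq \tfrac{q(\beta+1)}{v(\alpha+1)}$ is admissible (via case (b) of Theorem \ref{em}), giving the condition $N\leq\min\{p/u,\,q(\beta+1)/(v(\alpha+1))\}$; when $\alpha>\beta$, equality fails the auxiliary $q/N\leq v$ requirement and is ruled out, forcing the strict inequality $N<\tfrac{q(\beta+1)}{v(\alpha+1)}$ along with $N\leq p/u$. Tracing this correspondence carefully completes the necessity and sufficiency.

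Finally, for boundedness I would note that the inclusion $\h^{p/N,q/N}_\alpha\hookrightarrow\h^{u,v}_\beta$ is continuous whenever it holds. This can be deduced either from the closed graph theorem (both sides are complete quasi-Banach spaces whose convergence implies pointwise convergence on $\C_{1/2}$ by Lemma \ref{growth}) or, more concretely, by inspecting the quantitative bounds produced in the sufficient part of the proof of Theorem \ref{em}, which yield $\|g\|_{\h^{u,v}_\beta}\lesssim\|g\|_{\h^{p/N,q/N}_\alpha}$. Combining this with the identity $\|f^N\|_{\h^{p/N,q/N}_\alpha}=\|f\|^N_{\h^{p,q}_\alpha}$ from Corollary \ref{N-power} gives the desired estimate $\|S_\varphi f\|_{\h^{u,v}_\beta}\lesssim\|f\|^N_{\h^{p,q}_\alpha}$, proving that $S_\varphi$ is bounded.
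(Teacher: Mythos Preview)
Your overall strategy---reduce to an inclusion via Corollary~\ref{N-power} and then invoke Theorem~\ref{em}---is exactly the paper's, and your casework translating the Theorem~\ref{em} conditions into (i)/(ii) is correct. However, there is a genuine logical gap in the step
\[
S_\varphi:\h^{p,q}_\alpha\to\h^{u,v}_\beta \quad\Longleftrightarrow\quad \h^{p/N,q/N}_\alpha\subset\h^{u,v}_\beta.
\]
The forward implication fails as written. Corollary~\ref{N-power} applied on the source side tells you that $\{f^N:f\in\h^{p,q}_\alpha\}\subset\h^{p/N,q/N}_\alpha$, but this containment is strict: not every $g\in\h^{p/N,q/N}_\alpha$ is an $N$-th power of a Dirichlet series (for instance $g(s)=2^{-s}$ has no $N$-th root in $\D$ when $N\geq 2$). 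So knowing that all $N$-th powers land in $\h^{u,v}_\beta$ does not force the full inclusion $\h^{p/N,q/N}_\alpha\subset\h^{u,v}_\beta$.

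The paper avoids this by applying Corollary~\ref{N-power} on the \emph{target} side instead: for $f\in\D$ one has $f^N\in\h^{u,v}_\beta$ if and only if $f\in\h^{Nu,Nv}_\beta$, so $S_\varphi$ maps $\h^{p,q}_\alpha$ into $\h^{u,v}_\beta$ precisely when $\h^{p,q}_\alpha\subset\h^{Nu,Nv}_\beta$. This equivalence is clean in both directions, and Theorem~\ref{em} (with parameters $p,q,\alpha,Nu,Nv,\beta$) then yields the same conditions you derived. Everything downstream of this fix---the casework on $\alpha\leq\beta$ versus $\alpha>\beta$, and the boundedness argument---is fine.
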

\begin{proof}
If $N=0$, it is clear that $S_{\varphi}$ maps $\h^{p,q}_{\alpha}$ into $\h^{u,v}_{\beta}$. So we assume that $N\geq1$. Then it follows from Corollary \ref{N-power} that, for any $f\in\h^{p,q}_{\alpha}$, $S_{\varphi}f=f^N\in\h^{u,v}_{\beta}$ if and only if $f\in\h^{Nu,Nv}_{\beta}$. Therefore, $S_{\varphi}$ maps $\h^{p,q}_{\alpha}$ into $\h^{u,v}_{\beta}$ if and only if $\h^{p,q}_{\alpha}\subset\h^{Nu,Nv}_{\beta}$, which, due to Theorem \ref{em}, is equivalent to the fact that one of the following conditions holds:

($\dag$) $p\geq Nu$ and $\frac{\alpha+1}{q}<\frac{\beta+1}{Nv}$; or

($\ddag$) $p\geq Nu$, $\frac{\alpha+1}{q}=\frac{\beta+1}{Nv}$ and $q\leq Nv$.\\
It is trivial to verify that either ($\dag$) or ($\ddag$) holds if and only if $N$ satisfies (i) or (ii). Moreover, in this case, by the proof of Theorem \ref{em}, the inclusion $\h^{p,q}_{\alpha}\subseteq\h^{Nu,Nv}_{\beta}$ is bounded, so $S_{\varphi}:\h^{p,q}_{\alpha}\to\h^{u,v}_{\beta}$ is bounded, which completes the proof.
\end{proof}

The following lemma is a comparison principle for superposition operators, which indicates that if $\varphi$ is a polynomial such that $S_{\varphi}$ maps $\h^{p,q}_{\alpha}$ into $\h^{u,v}_{\beta}$, then for any polynomial $\psi$ whose degree is at most the degree of $\varphi$, $S_{\psi}$ also maps $\h^{p,q}_{\alpha}$ into $\h^{u,v}_{\beta}$.

\begin{lemma}\label{ppsi}
Let $0<p,u<\infty$, $0<q,v\leq\infty$, $\alpha,\beta>-1$, and let $\varphi$ be a polynomial of degree $k$. Suppose that the superposition operator $S_{\varphi}$ maps $\h^{p,q}_{\alpha}$ into $\h^{u,v}_{\beta}$. Then for any polynomial $\psi$ of degree $j$ with $0\leq j\leq k$, $S_{\psi}$ maps $\h^{p,q}_{\alpha}$ into $\h^{u,v}_{\beta}$. Moreover, there exists $C>0$ such that for any $f\in\h^{p,q}_{\alpha}$,
$$\|S_{\psi}f\|_{\h^{u,v}_{\beta}}\leq C\left(\|S_{\varphi}f\|_{\h^{u,v}_{\beta}}+1\right).$$
\end{lemma}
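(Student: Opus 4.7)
The plan is to derive a pointwise polynomial comparison on $\C$, push it through the Bohr lift to an inequality on the infinite torus $\TT$, and then integrate against $d\mu_{\beta}$. First I would establish the pointwise bound: since $\varphi$ has degree $k\geq j=\deg\psi$ with nonzero leading coefficient, comparing leading terms yields $|\varphi(z)|\geq c_{0}|z|^{k}$ for all $|z|\geq R_{0}$, with some $c_{0},R_{0}>0$, while $|\psi(z)|\leq C'(1+|z|^{j})$ everywhere. Combining these with the elementary inequality $x^{j/k}\leq 1+x$ for $x\geq 0$ (valid because $j\leq k$) gives a constant $C_{0}>0$ such that
\[
|\psi(z)|\leq C_{0}\bigl(|\varphi(z)|+1\bigr),\qquad z\in\C.
\]

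Next, for any $f\in\h^{p,q}_{\alpha}$ and any $\sigma>0$, Theorem \ref{sigma} gives $f_{\sigma}\in\h^{p}$. Since the Bohr lift is an algebra homomorphism on Dirichlet polynomials and these are dense in $\h^{p}$, a routine approximation argument yields
\[
\mathscr{B}((\psi\circ f)_{\sigma})=\psi\circ\mathscr{B}(f_{\sigma}),\qquad \mathscr{B}((\varphi\circ f)_{\sigma})=\varphi\circ\mathscr{B}(f_{\sigma})
\]
almost everywhere on $\TT$. Applying the pointwise estimate, raising to the $u$-th power, integrating over $\TT$, and using $(a+b)^{u}\leq 2^{\max\{u-1,0\}}(a^{u}+b^{u})$, I obtain
\[
\|(\psi\circ f)_{\sigma}\|^{u}_{\h^{u}}\leq C_{1}\bigl(\|(\varphi\circ f)_{\sigma}\|^{u}_{\h^{u}}+1\bigr),\qquad\sigma>0.
\]

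Finally, in the case $v<\infty$, I would raise to the $v/u$-th power with the same type of inequality and integrate against the probability measure $d\mu_{\beta}$ (so that the constant $1$ integrates to $1$), obtaining
\[
\|\psi\circ f\|^{v}_{\h^{u,v}_{\beta}}\leq C_{2}\bigl(\|\varphi\circ f\|^{v}_{\h^{u,v}_{\beta}}+1\bigr),
\]
whence Theorem \ref{suff} forces $\psi\circ f\in\h^{u,v}_{\beta}$, and taking $v$-th roots (with one more application of $(a+b)^{1/v}\lesssim a^{1/v}+1$) yields the claimed bound. The case $v=\infty$ reduces to the displayed $\h^{u}$-estimate at fixed $\sigma$, combined with Lemma \ref{con-sigma} (to see that $(\varphi\circ f)_{\sigma}\to\varphi\circ f$ in $\h^{u}$) and Lemma \ref{decreasing} (to recover $\psi\circ f\in\h^{u}$ from boundedness of $\|(\psi\circ f)_{\sigma}\|_{\h^{u}}$ as $\sigma\to 0^{+}$); the case $q=\infty$ plays no role in the argument. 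The one subtle point is the commutation of the Bohr lift with polynomial composition for arbitrary $f_{\sigma}\in\h^{p}$, which reduces to the algebraic identity on Dirichlet polynomials plus density; the rest is elementary bookkeeping of $L^{p}$-type inequalities at small exponents.
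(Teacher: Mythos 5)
Your overall strategy---establish a pointwise comparison $|\psi(z)|\lesssim|\varphi(z)|+1$ on $\C$, transfer it through the Bohr lift, and integrate against $d\mu_{\beta}$---has the same skeleton as the paper's proof, and your pointwise estimate and endgame (Theorem \ref{suff}) match it. But there is a genuine gap at the central step, namely where you write $\|(\psi\circ f)_{\sigma}\|^{u}_{\h^{u}}\leq C_{1}\bigl(\|(\varphi\circ f)_{\sigma}\|^{u}_{\h^{u}}+1\bigr)$. At that point all you actually know is that $(\psi\circ f)_{\sigma}=\psi\circ f_{\sigma}$ lies in $\h^{p/j}$ (by the easy direction of Proposition \ref{NN}, i.e.\ \cite{BCGMS}) and that the $L^{u}(\TT)$-integral of its Bohr lift is finite. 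That does not by itself place $(\psi\circ f)_{\sigma}$ in $\h^{u}$: the space $\h^{u}$ is defined as the completion of Dirichlet polynomials, and on the infinite-dimensional torus there is no boundary-value or Fourier-support characterization (certainly none for $u<1$) allowing you to conclude membership in $H^{u}(\TT)$ from finiteness of an $L^{u}$ integral of a function known only to lie in $H^{p/j}(\TT)$. You are implicitly invoking the assertion ``$g\in\h^{r}$ and $\mathscr{B}g\in L^{u}(\TT)$ imply $g\in\h^{u}$,'' which is nontrivial in infinitely many variables and is precisely the point on which the whole lemma turns; writing the left-hand side as an $\h^{u}$-norm presupposes exactly what must be proved.

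The paper circumvents this at exactly the place you skip it: it never works with infinite-dimensional boundary values, but instead applies Bohr's Abschnitte $A_{d}$. On $\DD^{d}$ the composition $\psi\circ\mathscr{B}(A_{d}f_{\sigma})$ is an honest analytic function, and membership in $H^{u}(\DD^{d})$ can be tested by radial integrals $\int_{\T^{d}}|\cdot(rz)|^{u}\,dm_{d}$ uniformly in $r<1$; this is where the pointwise comparison is actually deployed, via the splitting over the set where the modulus exceeds $R$. Then Lemma \ref{d-ab} ($\|g\|_{\h^{u}}=\sup_{d}\|A_{d}g\|_{\h^{u}}$) lifts the uniform-in-$d$ bound to $\psi\circ f_{\sigma}\in\h^{u}$ with the desired norm estimate, after which the integration in $\sigma$ and Theorem \ref{suff} proceed as you describe. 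Your ``routine approximation'' identity $\mathscr{B}(\psi\circ f_{\sigma})=\psi\circ\mathscr{B}(f_{\sigma})$ can indeed be justified (with some care at quasi-norm exponents), but it is the smaller of the two issues; without the Abschnitt reduction, or an equivalent substitute, your displayed $\h^{u}$-inequality, and hence the application of Theorem \ref{suff}, is unsupported. The same problem recurs in your $v=\infty$ case, where recovering $\psi\circ f\in\h^{u}$ from boundedness of $\|(\psi\circ f)_{\sigma}\|_{\h^{u}}$ as $\sigma\to0^{+}$ again requires the $A_{d}$ machinery rather than Lemmas \ref{con-sigma} and \ref{decreasing} alone.
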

\begin{proof}
We only consider the case $v<\infty$. The case $v=\infty$ is similar.
Since $j\leq k$, there exist $C_1,R>0$ such that
\begin{equation}\label{compare}
|\psi(\xi)|\leq C_1|\varphi(\xi)|\quad \text{if } |\xi|>R.
\end{equation}
Fix $f\in\h^{p,q}_{\alpha}$. Since $\varphi\circ f=S_{\varphi}f\in\h^{u,v}_{\beta}$, we deduce from Theorem \ref{sigma} that $\varphi\circ f_{\sigma}\in\h^u$ for any $\sigma>0$, and
$$\|S_{\varphi}f\|^v_{\h^{u,v}_{\beta}}=\int_0^{+\infty}\|\varphi\circ f_{\sigma}\|^v_{\h^u}d\mu_{\beta}(\sigma).$$
Then for any $d\geq1$, Lemma \ref{d-ab} yields that $\varphi\circ(A_df_{\sigma})=A_d(\varphi\circ f_{\sigma})\in\h^u$. Arguing as in the proof of Proposition \ref{NN}, we obtain that $\varphi\circ(A_df_{\sigma})$ converges uniformly on $\C_{\epsilon}$ for any $\epsilon>0$. Similarly, the fact that $f\in\h^{p,q}_{\alpha}$ implies that $A_df_{\sigma}$ converges uniformly on $\C_{\epsilon}$ for any $\epsilon>0$, and consequently, so does $\psi\circ(A_df_{\sigma})$. Hence $\mathscr{B}(A_df_{\sigma})$ and $\psi\circ\big(\mathscr{B}(A_df_{\sigma})\big)=\mathscr{B}\big(\psi\circ(A_df_{\sigma})\big)$ are both analytic on $\DD^d$. For any $0<r<1$, write
$$A_r:=\left\{(z_1,z_2,\cdots,z_d)\in\T^d:\left|\mathscr{B}(A_df_{\sigma})(rz_1,\cdots,rz_d)\right|>R\right\}.$$
Then by \eqref{compare},
\begin{align*}
&\int_{\T^d}\left|\psi\big(\mathscr{B}(A_df_{\sigma})(rz_1,\cdots,rz_d)\big)\right|^udm_d(z_1,\cdots,z_d)\\
&\quad\leq C_1^u\int_{A_r}\left|\varphi\big(\mathscr{B}(A_df_{\sigma})(rz_1,\cdots,rz_d)\big)\right|^udm_d(z_1,\cdots,z_d)+M_{\infty}(\psi,R)^u\\
&\quad\leq C_1^u\left\|\varphi\circ\big(\mathscr{B}(A_df_{\sigma})\big)\right\|^u_{H^u(\DD^d)}+M_{\infty}(\psi,R)^u\\
&\quad=C_1^u\|A_d(\varphi\circ f_{\sigma})\|^u_{\h^u}+M_{\infty}(\psi,R)^u\\
&\quad\leq C_1^u\|\varphi\circ f_{\sigma}\|^u_{\h^u}+M_{\infty}(\psi,R)^u.
\end{align*}
Since $r\in(0,1)$ is arbitrary, we obtain that $\psi\circ\big(\mathscr{B}(A_df_{\sigma})\big)\in H^u(\DD^d)$. Therefore, $A_d(\psi\circ f_{\sigma})\in\h^u$, and
$$\|A_d(\psi\circ f_{\sigma})\|^u_{\h^u}=\left\|\psi\circ\big(\mathscr{B}(A_df_{\sigma})\big)\right\|^u_{H^u(\DD^d)}
\leq C_1^u\|\varphi\circ f_{\sigma}\|^u_{\h^u}+M_{\infty}(\psi,R)^u.$$
In view of Lemma \ref{d-ab}, the arbitrariness of $d$ then yields $\psi\circ f_{\sigma}\in\h^u$, and
$$\|\psi\circ f_{\sigma}\|^u_{\h^u}
\leq C_1^u\|\varphi\circ f_{\sigma}\|^u_{\h^u}+M_{\infty}(\psi,R)^u.$$
Note that the above inequality holds for all $\sigma>0$. Consequently,
\begin{align*}
\int_0^{+\infty}\|\psi\circ f_{\sigma}\|^v_{\h^u}d\mu_{\beta}(\sigma)
&\lesssim \int_0^{+\infty}\|\varphi\circ f_{\sigma}\|^v_{\h^u}d\mu_{\beta}(\sigma)+M_{\infty}(\psi,R)^v\\
&=\|S_{\varphi}f\|^v_{\h^{u,v}_{\beta}}+M_{\infty}(\psi,R)^v.
\end{align*}
Hence by Theorem \ref{suff}, $S_{\psi}f=\psi\circ f$ belongs to $\h^{u,v}_{\beta}$, and the desired norm estimate holds. The arbitrariness of $f$ finishes the proof.
\end{proof}

We are now ready to display the main result of this section, which covers Theorem \ref{sBergman}.

\begin{theorem}\label{supe}
Let $0<p,q,u,v<\infty$, $\alpha,\beta>-1$, and let $\varphi$ be a function on $\C$. Then the superposition operator $S_{\varphi}$ maps $\h^{p,q}_{\alpha}$ into $\h^{u,v}_{\beta}$ if and only if $\varphi$ is a polynomial of degree $N$, where $N$ satisfies
\begin{enumerate}[(i)]
	\item $N\leq\min\left\{\frac{p}{u},\frac{q(\beta+1)}{v(\alpha+1)}\right\}$ in the case $\alpha\leq\beta$;
	\item $N\leq\frac{p}{u}$ and $N<\frac{q(\beta+1)}{v(\alpha+1)}$ in the case $\alpha>\beta$.
\end{enumerate}
Moreover, if $S_{\varphi}$ maps $\h^{p,q}_{\alpha}$ into $\h^{u,v}_{\beta}$, then it is bounded and continuous.
\end{theorem}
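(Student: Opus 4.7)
The plan is to assemble the theorem from the four pieces already in place: Lemma~\ref{poly} forces a superposition symbol to be a polynomial, Proposition~\ref{mo} handles the monomial case, Lemma~\ref{ppsi} compares a general polynomial with its leading monomial $z^{N}$, and Corollary~\ref{N-power} converts information about $f^{N}$ into information about $f$. Write $\varphi(z)=\sum_{k=0}^{N}c_{k}z^{k}$ with $c_{N}\neq 0$.

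For necessity, Lemma~\ref{poly} already guarantees $\varphi$ is a polynomial; I still must produce the numerical constraint on $N=\deg\varphi$. For any $f\in\h^{p,q}_{\alpha}$, $\lambda f\in\h^{p,q}_{\alpha}$ for each $\lambda\in\C$, so $\varphi(\lambda f)=\sum_{k=0}^{N}c_{k}\lambda^{k}f^{k}\in\h^{u,v}_{\beta}$. Choosing $N+1$ distinct scalars $\lambda_{0},\dots,\lambda_{N}$ and inverting the associated Vandermonde system exhibits each $c_{k}f^{k}$ as a linear combination in $\h^{u,v}_{\beta}$ of the $\varphi(\lambda_{j}f)$'s; in particular, since $c_{N}\neq 0$, $f^{N}\in\h^{u,v}_{\beta}$. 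Corollary~\ref{N-power} then upgrades this to $f\in\h^{Nu,Nv}_{\beta}$, so $\h^{p,q}_{\alpha}\subset\h^{Nu,Nv}_{\beta}$. Feeding this inclusion into Theorem~\ref{em} and splitting by the sign of $\alpha-\beta$ produces exactly the two clauses (i) and (ii).

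For sufficiency, assume $N$ satisfies (i) or (ii). Proposition~\ref{mo} then says $S_{z^{N}}$ is a bounded map $\h^{p,q}_{\alpha}\to\h^{u,v}_{\beta}$, and Lemma~\ref{ppsi}, applied with $z^{N}$ playing the role of its hypothesis polynomial, at once yields that $S_{\psi}$ sends $\h^{p,q}_{\alpha}$ into $\h^{u,v}_{\beta}$ for every polynomial $\psi$ of degree at most $N$, including the given $\varphi$.

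For the ``moreover'' clause, I decompose $S_{\varphi}f=\sum_{k=0}^{N}c_{k}f^{k}$. Because (i) and (ii) are monotone in $N$, each condition is inherited by every $k\leq N$; so Proposition~\ref{mo}, combined with Corollary~\ref{N-power} and the inclusion $\h^{p,q}_{\alpha}\subset\h^{ku,kv}_{\beta}$ from Theorem~\ref{em}, gives $\|f^{k}\|_{\h^{u,v}_{\beta}}\lesssim\|f\|_{\h^{p,q}_{\alpha}}^{k}$, and summation (using the $r$-triangle inequality with $r=\min\{u,v,1\}$) gives boundedness of $S_{\varphi}$ on bounded sets. For continuity, the algebraic identity $f_{n}^{k}-f^{k}=(f_{n}-f)\sum_{j=0}^{k-1}f_{n}^{j}f^{k-1-j}$ reduces matters to the H\"older-type product estimate $\|g_{1}\cdots g_{k}\|_{\h^{u,v}_{\beta}}\leq\prod_{i=1}^{k}\|g_{i}\|_{\h^{ku,kv}_{\beta}}$, which I plan to derive by applying the standard Hardy-space product inequality to each horizontal translate $(g_{i})_{\sigma}$ and then invoking H\"older against $d\mu_{\beta}(\sigma)$. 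This is the main technical obstacle, since coefficient multipliers are unavailable in the Dirichlet-series setting; but the translate-and-integrate device, already central in Section~\ref{sec2} and in the proof of Theorem~\ref{em}, should carry it through. Combined with the embedding $\h^{p,q}_{\alpha}\subset\h^{ku,kv}_{\beta}$, convergence $f_{n}\to f$ in $\h^{p,q}_{\alpha}$ will force $\|f_{n}^{k}-f^{k}\|_{\h^{u,v}_{\beta}}\to 0$, and summing against the coefficients $c_{k}$ closes the continuity argument.
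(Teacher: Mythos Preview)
Your proposal is correct and reaches the same conclusions as the paper, but it diverges from the paper's argument in two places. For necessity, the paper does not use a Vandermonde system: once Lemma~\ref{poly} shows $\varphi$ is a polynomial of degree $N$, it simply invokes Lemma~\ref{ppsi} with $\psi(\xi)=\xi^{N}$ to conclude that $S_{\xi^{N}}$ maps $\h^{p,q}_{\alpha}$ into $\h^{u,v}_{\beta}$, and then Proposition~\ref{mo} delivers the numerical constraints (i)/(ii). Your Vandermonde argument is a valid alternative route to ``$f^{N}\in\h^{u,v}_{\beta}$ for every $f$'', and it has the virtue of being elementary, but since Lemma~\ref{ppsi} is already proved in the paper, the authors' use of it is shorter. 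For continuity, the difference is more substantial: the paper does not carry out the factorization $f_{n}^{k}-f^{k}=(f_{n}-f)\sum f_{n}^{j}f^{k-1-j}$ or the H\"older product estimate you sketch; instead it observes that point evaluations on $\h^{u,v}_{\beta}$ are bounded (Lemma~\ref{growth}) and then appeals to \cite[Theorem~3.1]{BR}, a general result guaranteeing automatic continuity of holomorphic superposition operators between spaces with continuous point evaluations. Your hands-on approach is correct---the inequality $\|g_{1}\cdots g_{k}\|_{\h^{u,v}_{\beta}}\leq\prod_{i}\|g_{i}\|_{\h^{ku,kv}_{\beta}}$ does follow from H\"older at each $\sigma$ combined with H\"older in $d\mu_{\beta}$, and the embeddings $\h^{p,q}_{\alpha}\subset\h^{ku,kv}_{\beta}$ for $k\leq N$ are available from Theorem~\ref{em}---so the plan would succeed; it is simply more labor than the paper's one-line citation. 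Boundedness and sufficiency are handled essentially the same way in both.
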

\begin{proof}
If $\varphi$ is a polynomial of degree $N$ that satisfies (i) or (ii), then in view of  Proposition \ref{mo}, $S_{\varphi}$ maps $\h^{p,q}_{\alpha}$ boundedly into $\h^{u,v}_{\beta}$. Applying Lemma \ref{growth}, we know that for any $s\in\C_{1/2}$, the point evaluation at $s$ is bounded on $\h^{u,v}_{\beta}$. Hence  $S_{\varphi}:\h^{p,q}_{\alpha}\to\h^{u,v}_{\beta}$ is continuous due to \cite[Theorem 3.1]{BR}. Conversely, if $S_{\varphi}$ maps $\h^{p,q}_{\alpha}$ into $\h^{u,v}_{\beta}$, then by Lemma \ref{poly}, $\varphi$ is a polynomial. Assume the degree of $\varphi$ is $N$. Then it follows from Lemma \ref{ppsi} that the function $\psi(\xi)=\xi^N$ also induces a superposition operators mapping $\h^{p,q}_{\alpha}$ into $\h^{u,v}_{\beta}$. Therefore,  it follows from Proposition \ref{mo} that  $N$ satisfies (i) or (ii). The proof is completed.
\end{proof}

We end this paper by characterizing the superposition operators between Hardy spaces and mixed norm spaces of Dirichlet series. The proof is similar with the one of Theorem \ref{supe} and so is omitted.

\begin{theorem}
Let $0<p,u,v<\infty$, $\alpha>-1$, and let $\varphi$ be a function on $\C$. Then the following assertions hold.
\begin{enumerate}
	\item $S_{\varphi}$ maps $\h^p$ into $\h^{u,v}_{\alpha}$ if and only if $\varphi$ is a polynomial of degree at most $p/u$.
	\item $S_{\varphi}$ maps $\h^{u,v}_{\alpha}$ into $\h^p$ if and only if $\varphi$ is constant.
\end{enumerate}
\end{theorem}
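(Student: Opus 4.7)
The plan is to mirror the proof of Theorem \ref{supe}, exploiting three ingredients already established in the paper: (a) Lemma \ref{poly} (with its ``minor modifications'' for $q=\infty$ or $v=\infty$) to force $\varphi$ to be a polynomial in each case; (b) Lemma \ref{ppsi}, the comparison principle, to reduce matters to the leading monomial $\psi(\xi)=\xi^N$; and (c) Proposition \ref{NN}, which converts the superposition action $f\mapsto f^N$ into a pure inclusion problem that can then be settled by Proposition \ref{hardy}.

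For part (1), sufficiency goes as follows. Write $\varphi(\xi)=\sum_{k=0}^{N} c_k \xi^k$ with $N\le p/u$. For $f\in\h^p$ and $1\le k\le N$, Proposition \ref{NN} gives $f^k\in\h^{p/k}$; since $p/k\ge p/N\ge u$, Proposition \ref{hardy}(1) yields $f^k\in\h^{u,v}_\alpha$. The constant term is trivially in $\h^{u,v}_\alpha$, and summing in the $r$-Banach structure of $\h^{u,v}_\alpha$ (with $r=\min\{u,v,1\}$) delivers $\varphi\circ f\in\h^{u,v}_\alpha$. For necessity, I would invoke the extension of Lemma \ref{poly} to $q=\infty$ (viewing $\h^p=\h^{p,\infty}_\alpha$) to conclude $\varphi$ is a polynomial of some degree $N$, then apply Lemma \ref{ppsi} to reduce to $\psi(\xi)=\xi^N$. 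By Proposition \ref{NN}, the fact that $f\mapsto f^N$ maps $\h^p$ into $\h^{u,v}_\alpha$ is equivalent to the inclusion $\h^p\subset\h^{Nu,Nv}_\alpha$ (via $f^N\in\h^{u,v}_\alpha\Longleftrightarrow f\in\h^{Nu,Nv}_\alpha$, which follows from Corollary \ref{N-power} for the mixed norm target); Proposition \ref{hardy}(1) then pins this down to $p\ge Nu$, i.e.\ $N\le p/u$.

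For part (2), sufficiency is immediate, since constant functions are Dirichlet polynomials and hence belong to $\h^p$. For necessity, Lemma \ref{poly} (now in its $v=\infty$ endpoint) again forces $\varphi$ to be a polynomial of some degree $N$, and Lemma \ref{ppsi} reduces to the monomial $\psi(\xi)=\xi^N$. Were $N\ge 1$, Proposition \ref{NN} would translate the boundedness of $f\mapsto f^N$ from $\h^{u,v}_\alpha$ to $\h^p$ into the inclusion $\h^{u,v}_\alpha\subset\h^{Np}$. But Proposition \ref{hardy}(2) shows that $\h^{u,v}_\alpha$ is never contained in any Hardy space $\h^{Np}$ (the lacunary test $f(s)=\sum_{n\ge 0} 2^{-2^n s}$ does the job regardless of $Np$). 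This contradiction forces $N=0$, so $\varphi$ is constant.

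The only genuinely delicate point I anticipate is confirming that the argument of Lemma \ref{poly} carries through the endpoints $q=\infty$ (needed in part (1)) and $v=\infty$ (needed in part (2)). The admissible window for the auxiliary exponent $\gamma$ appearing in the test function $g(s)=(1-2^{-s})^{-\gamma}$ degenerates at these endpoints, collapsing respectively to $\gamma<1/p$ and to $\gamma>1/((N+1)u)$; however, in both cases the window remains nonempty for $N$ sufficiently large, so the ``$\varphi$ is a polynomial'' conclusion still holds and the sharper degree bounds are then supplied by the inclusion analysis above. Everything else is a direct transcription of the scheme used to prove Theorem \ref{supe}.
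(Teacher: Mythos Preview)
Your proposal is correct and follows exactly the route the paper intends: the paper explicitly omits this proof, stating that it ``is similar with the one of Theorem \ref{supe},'' and your plan---Lemma \ref{poly} (at the endpoints $q=\infty$ and $v=\infty$) to force $\varphi$ polynomial, Lemma \ref{ppsi} to reduce to the leading monomial, then Proposition \ref{NN}/Corollary \ref{N-power} combined with Proposition \ref{hardy} to settle the resulting inclusion---is precisely that scheme. Your discussion of the degenerating $\gamma$-window at the endpoints is the correct way to handle the ``minor modifications'' the paper alludes to in Lemma \ref{poly}; note only a small notational slip: in part (2) the target is $\h^p$, so the lower constraint on $\gamma$ reads $\gamma>\tfrac{1}{(N+1)p}$ rather than $\tfrac{1}{(N+1)u}$.
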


\begin{corollary}
Let $0<p,q<\infty$, $\alpha>-1$, and let $\varphi$ be a function on $\C$. Then the following assertions hold.
\begin{enumerate}
	\item $S_{\varphi}$ maps $\h^p$ into $\A^q_{\alpha}$ if and only if $\varphi$ is a polynomial of degree at most $p/q$.
	\item $S_{\varphi}$ maps $\A^p_{\alpha}$ into $\h^q$ if and only if $\varphi$ is constant.
\end{enumerate}
\end{corollary}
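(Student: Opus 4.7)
The plan is to deduce the corollary as a direct specialization of the preceding (unnamed) theorem on superposition operators between $\h^p$ and mixed norm spaces $\h^{u,v}_\alpha$. Since the Bergman space $\A^q_\alpha$ is by definition $\h^{q,q}_\alpha$, the two assertions of the corollary are precisely the diagonal cases $u=v=q$ and $u=v=q$ of the two parts of that theorem respectively.

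More concretely, for part (1), I would apply the preceding theorem with the mixed norm target space chosen as $\h^{q,q}_\alpha$. Its first statement then reads: $S_\varphi$ maps $\h^p$ into $\h^{q,q}_\alpha=\A^q_\alpha$ if and only if $\varphi$ is a polynomial of degree at most $p/q$, which is exactly what is claimed. For part (2), I would apply the second statement of that theorem with $u=v=p$, giving: $S_\varphi$ maps $\h^{p,p}_\alpha=\A^p_\alpha$ into $\h^q$ if and only if $\varphi$ is constant.

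Thus there is essentially nothing to prove beyond invoking the identification $\A^r_\alpha=\h^{r,r}_\alpha$ (which is recorded right after the definition of the mixed norm spaces in the introduction). There is no real obstacle here: the entire content is packaged in Theorem~\ref{supe} together with the preceding theorem, whose proof the authors explicitly note runs parallel to that of Theorem~\ref{supe}. The only thing worth remarking on in the writeup is that, as in Theorem~\ref{supe}, the resulting operators are automatically bounded and continuous, since the point evaluations on both $\A^q_\alpha$ and $\h^q$ are bounded on $\C_{1/2}$ (use Lemma~\ref{growth} and the classical growth estimate for $\h^q$), so the closed graph / Berg--R\"atty-type argument cited in the proof of Theorem~\ref{supe} applies verbatim.
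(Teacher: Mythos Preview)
Your proposal is correct and matches the paper's own treatment: the corollary is stated immediately after the theorem on superposition operators between $\h^p$ and $\h^{u,v}_\alpha$ with no proof given, precisely because it is the diagonal specialization $\A^r_\alpha=\h^{r,r}_\alpha$ that you describe. The only cosmetic point is that for part (2) you must also relabel the Hardy-space exponent in the preceding theorem (its ``$p$'' becomes the corollary's $q$, while $u=v$ become the corollary's $p$), but this does not affect the argument.
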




\begin{thebibliography}{99}














\bibitem{AOS} A. Aleman, J.-F. Olsen and E. Saksman,
\newblock{Fourier multipliers for Hardy spaces of Dirichlet series,}
\newblock Int. Math. Res. Not. IMRN 2014, 4368--4378.



\bibitem{ACP74} J. M. Anderson, J. Clunie and C. Pommerenke,
\newblock{On Bloch functions and normal functions,}
\newblock J. Reine Angew. Math. 270 (1974), 12--37.





\bibitem{AZ} J. Appell and P. P. Zabrejko,
\newblock{Nonlinear superposition operators,}
\newblock Cambridge Tracts in Mathematics, 95. Cambridge University Press, Cambridge, 1990.






\bibitem{A19} M. Aymone,
\newblock{ Real zeros of random Dirichlet series,}
\newblock Electron. Commun. Probab. 24 (2019),  8 pp.


\bibitem{AFM} M. Aymone, S. Fr\'{o}meta and R. Misturini,
\newblock{How many real zeros does a random Dirichlet series have? }
\newblock Electron. J. Probab. 29 (2024),  1--17.


\bibitem{BL} M. Bailleul and P. Lef\`{e}vre,
\newblock{Some Banach spaces of Dirichlet series,}
\newblock Studia Math. 226 (2015), 17--55.


\bibitem{Ba} F. Bayart,
\newblock{Hardy spaces of Dirichlet series and their composition operators,}
\newblock Monatsh. Math. 136 (2002), 203--236.

\bibitem{BCGMS} F. Bayart, J. Castillo-Medina, D. Garc\'{i}a, M. Maestre and P. Sevilla-Peris,
\newblock{Composition operators on spaces of double Dirichlet series,}
\newblock Rev. Mat. Complut. 34 (2021), 215--237.


\bibitem{BM} G. Bhowmik  and K. Matsumoto,
\newblock{Analytic continuation of random Dirichlet series,}
\newblock Proc. Steklov Inst. Math.   282  (2013), S67--S72.








\bibitem{B63} P. Billard,
\newblock{S\'{e}ries de Fourier al\'{e}atoirement born\'{e}es, continues, uniformément convergentes,}
\newblock Studia Math. 22 (1963), 309--329.



\bibitem{BBSS} A. Bondarenko, O. F. Brevig, E. Saksman and K. Seip,
\newblock{Linear space properties of $H^p$ spaces of Dirichlet series,}
\newblock Trans. Amer. Math. Soc. 372 (2019),  6677--6702.

\bibitem{BV} J. Bonet and D. Vukoti\'{c},
\newblock{Superposition operators between weighted Banach spaces of analytic functions of controlled growth,}
\newblock Monatsh. Math. 170 (2013), 311--323.


\bibitem{B} G. Bourdaud,
\newblock{Le calcul fonctionnel dans les espaces de Sobolev,}
\newblock Invent. Math. 104 (1991), 435--446.



\bibitem{BK} G. Bourdaud  and D.  Kateb,
\newblock{Fonctions qui op\`{e}rent sur les espaces de Besov,}
\newblock Math. Ann. 303 (1995), 653--675.





\bibitem{BLS0} G. Bourdaud,  M.  Lanza de Cristoforis   and W.  Sickel,
\newblock{Functional calculus on BMO and related spaces,}
\newblock J. Funct. Anal.  189 (2002), 515--538.



	
\bibitem{BLS} G. Bourdaud,  M.  Moussai  and W.  Sickel,
\newblock{Composition operators on Lizorkin-Triebel spaces,}
\newblock J. Funct. Anal. 259 (2010),  1098--1128.



\bibitem{BR} C. Boyd and P. Rueda,
\newblock{Holomorphic superposition operators between Banach function spaces,}
\newblock J. Aust. Math. Soc. 96 (2014), 186--197.







\bibitem{Ca95} G. A. C\'{a}mera,
\newblock{Nonlinear superposition on spaces of analytic functions,}
\newblock Harmonic analysis and operator theory (Caracas, 1994), 103--116, Contemp. Math., 189, Amer. Math. Soc., Providence, RI, 1995.

\bibitem{CaG94} G. A. C\'{a}mera and J. Gim\'{e}nez,
\newblock{The nonlinear superposition operator acting on Bergman spaces,}
\newblock Compositio Math. 93 (1994), 23--35.


\bibitem{CDS} D. Carando,  A.  Defant and P.  Sevilla-Peris,
\newblock{Almost sure-sign convergence of Hardy-type Dirichlet series,}
\newblock J. Anal. Math.   135 (2018), 225--247.






\bibitem{CMST} D. Carando,  F.  Marceca, M.  Scottiand and  P. Tradacete,
\newblock{Random unconditional convergence of vector-valued Dirichlet series,}
\newblock J. Funct. Anal.  277  (2019), 3156--3178.













\bibitem{CFL22} G. Cheng, X. Fang and C. Liu,
\newblock{A Littlewood-type theorem for random Bergman functions,}
\newblock Int. Math. Res. Not. IMRN 2022, 1--36.



\bibitem{CSU}  W. G. Cochran, J. H. Shapiro and D. C. Ullrich,
\newblock{Random Dirichlet functions: multipliers and smoothness,}
\newblock Canad. J. Math. 45 (1993), 255--268.


\bibitem{DG} H. Dan and K. Guo,
\newblock{The periodic dilation completeness problem: cyclic vectors in the Hardy space over the infinite-dimensional polydisk,}
\newblock J. Lond. Math. Soc.  103 (2021), 1--34.

\bibitem{DGMS} A. Defant, D. Garc\'{i}a, M. Maestre and P. Sevilla-Peris,
\newblock{Dirichlet series and holomorphic functions in high dimensions,}
\newblock New Mathematical Monographs, 37. Cambridge University Press, Cambridge, 2019.

\bibitem{DP} A. Defant and A. P\'{e}rez,
\newblock{Hardy spaces of vector-valued Dirichlet series,}
\newblock Studia Math. 243 (2018), 53--78.

\bibitem{DoGi} S. Dom\'{i}nguez and D. Girela,
\newblock{Superposition operators between mixed norm spaces of analytic functions,}
\newblock Mediterr. J. Math. 18 (2021), 11 pp.



\bibitem{D85} P. Duren,
\newblock{Random series and bounded mean oscillation,}
\newblock Michigan Math. J. 32 (1985), 81-86.




\bibitem{Du19} R. Durrett,
\newblock{Probability: theory and examples,}
\newblock 5th edition. Cambridge Series in Statistical and Probabilistic Mathematics, 49. Cambridge University Press, Cambridge, 2019.

\bibitem{FT23} X. Fang and P. T. Tien,
\newblock{Two problems on random analytic functions in Fock spaces,}
\newblock Canad. J. Math. 75 (2023), 1176--1198.

\bibitem{FGMS} T. Fern\'{a}ndez Vidal, D. Galicer, M. Mereb and P. Sevilla-Peris,
\newblock{Hardy space of translated Dirichlet series,}
\newblock Math. Z. 299 (2021), 1103--1129.

\bibitem{FGY} X. Fu, K. Guo and F. Yan,
\newblock{McCarthy--Bergman spaces of Dirichlet series and two kinds of integral operators,}
\newblock preprint, 2023.

\bibitem{G} F. Gao,
\newblock{A characterization of random Bloch functions,}
\newblock J. Math. Anal. Appl. 252 (2000), 959--966.

\bibitem{H39} P. Hartman,
\newblock{On Dirichlet series involving random coefficients,}
\newblock Amer. J. Math.  61 (1939), 955--964.




\bibitem{HLS} H. Hedenmalm, P. Lindqvist and K. Seip,
\newblock{A Hilbert space of Dirichlet series and systems of dilated functions in $L^2(0,1)$,}
\newblock Duke Math. J. 86 (1997), 1--37.

\bibitem{JVA} M. Jevti\'{c}, D. Vukoti\'{c} and M. Arsenovi\'{c},
\newblock{Taylor coefficients and coefficient multipliers of Hardy and Bergman-type spaces,}
\newblock RSME Springer Series, 2. Springer, Cham, 2016.



\bibitem{K85} J. P.  Kahane,
\newblock{Some random series of functions,}
\newblock Second edition, Cambridge Studies in Advanced Mathematics, 5, Cambridge University Press, Cambridge, 1985.

\bibitem{K22} B. Karapetrovi\'{c},
\newblock{Randomization in generalized mixed norm spaces,}
\newblock Complex Anal. Oper. Theory 16 (2022), 11 pp.




\bibitem{KQSS} S. Konyagin, H. Queff\'{e}lec, E. Saksman and K. Seip,
\newblock{Riesz projection and bounded mean oscillation for Dirichlet series,}
\newblock Studia Math. 262 (2022), 121--149.

\bibitem{LQ1} D. Li and H. Queff\'{e}lec,
\newblock{Introduction to Banach spaces: analysis and probability. Vol. 1,}
\newblock Cambridge Studies in Advanced Mathematics, 166. Cambridge University Press, Cambridge, 2018.

\bibitem{LQ2} D. Li and H. Queff\'{e}lec,
\newblock{Introduction to Banach spaces: analysis and probability. Vol. 2,}
\newblock Cambridge Studies in Advanced Mathematics, 167. Cambridge University Press, Cambridge, 2018.

\bibitem{Li26} J. E. Littlewood,
\newblock{On the mean values of power series,}
\newblock Proc. London Math. Soc.  25 (1926), 328--337.

\bibitem{Li30} J. E. Littlewood,
\newblock{Mathematical notes (13): on mean values of power sereis (II),}
\newblock J. London Math. Soc. 5 (1930),  179--182.



\bibitem{MM} M. Marcus and V. J. Mizel,
\newblock{Every superposition operator mapping one Sobolev space into another is continuous,}
\newblock J. Funct. Anal. 33 (1979), 217--229.





\bibitem{MP78} M. Marcus and G. Pisier,
\newblock{Necessary and sufficient conditions for the uniform convergence of random trigonometric series,}
\newblock Lecture Notes Series, 50, Mathematisk Insitut, Aarhus Universitet, Aarhus, 1978.

\bibitem{MP81} M. Marcus and G. Pisier,
\newblock{Random Fourier series with applications to harmonic analysis,}
\newblock  Princeton University Press, Princeton, NJ, 1981.




\bibitem{MP83} M. Mateljevi\'{c} and M. Pavlovi\'{c},
\newblock{$L^p$-behavior of power series with positive coefficients and Hardy spaces,}
\newblock Proc. Amer. Math. Soc. 87 (1983), 309--316.

\bibitem{Me22} T. Mengestie,
\newblock{Weighted superposition operators on Fock spaces,}
\newblock Rev. R. Acad. Cienc. Exactas F\'{i}s. Nat. Ser. A Mat. RACSAM 116 (2022), 10 pp.

\bibitem{NP23} A. Nishry and E. Paquette,
\newblock{Gaussian analytic functions of bounded mean oscillation,}
\newblock Anal. PDE  16 (2023), 89--117.





\bibitem{QQ} H. Queff\'{e}lec and M. Queff\'{e}lec,
\newblock{Diophantine approximation and Dirichlet series,}
\newblock Second edition. Texts and Readings in Mathematics, 80. Hindustan Book Agency, New Delhi; Springer, Singapore, 2020.

\bibitem{Ru80} W. Rudin,
\newblock{Function theory in the unit ball of $\mathbb{C}^n$,}
\newblock Grundlehren der Mathematischen Wissenschaften, 241. Springer-Verlag, New York-Berlin, 1980.


\bibitem{SZ54} R. Salem and A. Zygmund,
\newblock{Some properties of trigonometric series whose terms have random signs,}
\newblock Acta Math. 91 (1954), 245--301.




\bibitem{S81} W. Sledd,
\newblock{Random series which are BMO or Bloch,}
\newblock Michigan Math. J.  28 (1981), 259--266.

\bibitem{SW} F. Sun and H. Wulan,
\newblock{The nonlinear superposition operators between some analytic function spaces,}
\newblock Proc. Amer. Math. Soc.  151 (2023),  3431--3437.






\bibitem{Zhu07} K. Zhu,
\newblock{Operator theory in function spaces,}
\newblock Second edition. Mathematical Surveys and Monographs, 138. American Mathematical Society, Providence, RI, 2007.








\end{thebibliography}
\end{document}